\documentclass[12pt]{amsart}
\usepackage[margin=1.2in]{geometry}
%\title{Systems of vectors generated by the iterations of operators}
%\title{Dynamic of a Normal Operator in Hilbert Space}
%\title{The Dynamics of a Normal Operator}
%\title {Dynamics of Normal Operators} }
%\title{Dyncamical sampling in continuous time }

%\title{Frames induced by the action of the powers of an operator}
\title{Frames Induced by the Action of   Continuous Powers of an Operator}
%\title{On the Frames Induced by the Action of Powers of An Operator}

\usepackage{hyperref}
\author[A. Aldroubi, L.X. Huang, A. Petrosyan]{A. Aldroubi, L.X. Huang, A. Petrosyan}

\date{\vspace{-5ex}}
\usepackage{amsthm, amsfonts}
\usepackage{ dsfont }
\usepackage{amssymb}
\usepackage{enumerate}
\usepackage{graphicx}
\usepackage{float}
\usepackage{bbm}
\usepackage{amsmath}
\usepackage{comment}
\usepackage{hyperref}
\usepackage{listings}
\usepackage{color}
\usepackage[dvipsnames]{xcolor}
\usepackage{enumitem}
\allowdisplaybreaks

\hypersetup{
    colorlinks,
    citecolor=blue,
    filecolor=blue,
    linkcolor=blue,
    urlcolor=blue
}

\newtheorem{theorem}{Theorem}[section]
\newtheorem{proposition}[theorem]{Proposition}
\newtheorem{lemma}[theorem]{Lemma}

\newtheorem{corollary}[theorem]{Corollary}
\newtheorem{definition}[theorem]{Definition}

\newtheorem{claim}[theorem]{Claim}
\newtheorem{conjecture}[theorem]{Conjecture}

\newtheorem{example}{Example}

% % % % % % % % % % %our macros % % % % % % % % % % % % % % %

\newcommand{\HH}{\mathcal{H}}

\newcommand{\R}{\mathbb{R}}
\newcommand{\Z}{\mathbb{Z}}
\newcommand{\N}{\mathbb{N}}
\newcommand{\CC}{\mathbb{C}}

\providecommand{\norm}[1]{\lVert#1\rVert}
\newcommand{\G}{\mathcal{G}}

%%%%%%%%%%%%%%%%%
\newcommand{\wi}{\widetilde}
\newcommand{\Om}{\Omega}
%%%%%%%%%%%%%%%%%

\newcommand{\rank}{{\rm rank\,}}

\newcommand {\la} {\langle}
\newcommand {\ra} {\rangle}
% % % % % % % % % % % % % % % % % % % % % % % % % % %

%\includeonly{DSCTintro}
%\includeonly{DSCTnotatandprelim}
%\includeonly{DSCTbio}
%\includeonly{DSCTbio}

\begin{document}
	\date{}
\address{\textrm{(Akram Aldroubi)}
Department of Mathematics,
Vanderbilt University,
Nashville, Tennessee 37240-0001 USA}
\email{aldroubi@math.vanderbilt.edu}

\address{\textrm{(Longxiu Huang)}
	Department of Mathematics,
	Vanderbilt University,
	Nashville, Tennessee 37240-0001 USA}
\email{longxiu.huang@vanderbilt.edu}

\address{\textrm{(Armenak Petrosyan)}
Computational and Applied Mathematics Group,
Oak Ridge National Laboratory,
1 Bethel Valley Rd, Oak Ridge, Tennessee 37830, USA}
\email{petrosyana@ornl.gov}

\thanks{
The authors were supported in part by NSF Grant DMS- 1322099.
}

\keywords{continuous frames, semi-continuous frames, dynamical sampling, discretization of continuous frames, frames induced by continuous powers of operators}
\subjclass [2010] {46N99, 42C15,  94O20}

\maketitle
\begin{abstract}
	We investigate systems of the form $\{A^tg:g\in\G,t\in[0,L]\}$ where $A \in B(\HH)$ is a normal operator in a separable Hilbert space $\HH$, $\G\subset \HH$ is a countable set, and $L$ is a positive real number. Although the main goal of this work is to study the frame properties of  $\{A^tg:g\in\G,t\in[0,L]\}$, as  intermediate steps,  we explore the completeness and Bessel properties of such systems from a theoretical perspective, which are of interest by themselves. Beside the theoretical appeal of investigating such systems, their connections to dynamical and mobile sampling  make them fundamental for understanding and solving several major problems in   engineering and science.
\end{abstract}
\section{Introduction}
In a foundational paper, Duffin and Schaeffer  introduced the theory of frames  in the context of non-harmonic Fourier series \cite{DS52}. In this remarkable paper, the authors first gave conditions on a sequence of real numbers $\{\lambda_n\}_{n\in \Z}$ that induce  a Riesz basis of  exponentials $\{e^{i\lambda_n t}\}_{n\in\Z}$ for $L^2(- \frac 1 2, \frac 1 2)$. They then proposed the concept of frames which generalizes that of Riesz bases. Specifically, a frame $\{\phi_n\}_{n \in \Z}$ in a separable Hilbert space $\HH$ is a sequence of vectors satisfying
\begin{equation} 
\label {DSframes}
c\left\|f\right\|^2\le \sum\limits_{n\in \Z }\left\vert \la f,\phi_n\ra \right\vert^2\le C\|f\|^2, ~ \text {for all } f \in \HH,
\end {equation}
for some positive constants $c,C>0$. 
They showed  that, if \eqref {DSframes} holds, then (similar to a Riesz basis) any function $f\in \HH$ can be represented by the series
\[
f=\sum\limits_{n\in \Z }  \la f,\phi_n\ra  \tilde \phi_n,
\]
where $\{ \tilde \phi_n \}_{n\in \Z}$ is a dual frame and the convergence of the series is unconditional. Thus, every Riesz basis is a frame but a frame may have redundant vectors and hence need not be a basis.  However, the relation between Riesz bases and redundant frames is not self-evident. For example, there are frames for $\HH$  that have no  subsequences that are Riesz bases for $\HH$ (see, e.g., \cite {Christ16, Heil11} and the references therein).

The conditions on $\{\lambda_n\}_{n\in \Z}$ under which a system of exponentials $\{e^{i\lambda_n t}\}_{n\in\Z}$ becomes a frame for $L^2(- \frac 1 2, \frac 1 2)$ is also obtained in \cite{DS52}.  Using the Fourier transform, a set $\{e^{i\lambda_n t}\}_{n\in\Z}$ is a frame for $L^2(- \frac 1 2, \frac 1 2)$ if and only if  any function $f$ is in the Paley-Wiener space $PW_{1/2}=\{f \in L^2(\R): \hat f (\xi)=0   ~a.e.~\xi \notin (- \frac 1 2, \frac 1 2) \}$ can be recovered from its samples $\{f(\lambda_n)\}_{n\in \Z}$ in a {\em stable way}, i.e., there exists a bounded operator $R:\ell^2(\Z) \to \HH$  such that $R(f(\lambda_n))=f$.  This duality between reconstruction of functions from samples and frames has been used and extended  in many directions, including for wavelet representations, time-frequency analyses, and sampling in shift-invariant spaces (see, e.g., \cite {ABK08, AG01, CCK13, DS16, Daubechies_1992, Han09, Mallat09,  BM08, NO12, Sun14, ST17}).

\subsection{Dynamical sampling and frames induced by the action of  continuous powers of an operator}

\subsubsection {Dynamical sampling} 
The general problem in sampling theory is to reconstruct a function $f$ in a separable Hilbert space $\HH$ from  its samples. A natural idea is to sample the function $f$ at many  accessible positions  and one expects that, with some {\it a priori}  information, $f$ can be reconstructed from those samples. This idea is precisely the impetus of classical sampling theory. Related results can be found in, e.g.,  \cite{Adcock_2012,ABK08,AG01,BG12,Cand_s_2006,S01,Su06}. However, in real-world applications, there are many restrictions. For example, sampling may not be accessible at some required locations. Moreover, the spatial sampling density can be very limited, because  sensors are often expensive and it is costly to achieve a high sampling density.

In many instances, the functions  evolve over time by  a known driving operator. A common example is provided by diffusion and  modeled by the heat equation \cite{Lu_2009}. For such functions, a  novel theory has been developed recently, and it is termed {\it dynamical sampling theory}. The general idea of dynamical sampling is to reduce the spatial sampling density by increasing the temporal sampling rate. 

In dynamical sampling, the samples $\{(A^{t_j}f)(x_i): i\in \Z, j=0,\dots,J\}$ are taken repeatedly over time at some fixed spatial locations $X=\{x_i\}_{i\in \Z}$. Since the operator $A$ driving the evolution of $f$ can combine the information of $f$ from different locations, one may expect to recover the original function $f$ from $\{(A^{t_j}f)(x_i): i\in \Z, t_j\in T\}$, if the sampling locations are well chosen,  the operator $A$  is well-behaved, and the time-set $T=\{t_0,\dots, t_J\}$ (or $T=[0,L]$)  is large enough. The dynamical sampling problem is to derive necessary and sufficient conditions in terms of  the  operator $A$, the sampling set $X$, and  the sampling time-set $T$ such that the samples from different time levels are adequate to recover the original signal.

\subsubsection {Frames induced by the action of powers of an operator} 
The mathematical formulation of dynamical sampling  can be stated as follows. Let $A$ be a bounded linear operator on a separable Hilbert space $\HH$, and let $f\in\HH$ be  the initial state of an evolution system. At time $t$, the initial signal $f$ evolves to become
$$f_t=A^t f.$$ 
Given a countable (finite or countably infinite) set of vectors $\G \subset \HH$, the task is to find conditions on $A\in B(\HH)$, $\G$, and $T\subset [0, \infty)$ that allow the recovery or stable recovery of any function $f\in\HH$ from the set of samples 
\begin{equation} 
\left\{\langle A^tf,g\rangle: g\in\G, t\in T\right\}.
\end{equation}
By the recovery of $f$ we mean that there exists  an operator $R$ from $\G\times T$ to $\HH$ such that $R\big(\langle A^tf,g\rangle\big)=f$ for all $ f \in \HH$. While  by stable recovery of $f$ we mean that the operator $R$ is bounded. 
The problem above is equivalent to finding conditions on $A$, $\G$, and $T$ such that $\{A^{*t}g\}_{g\in\G,t\in T}$ (where $A^*$ denotes the adjoint of $A$) is complete or  a continuous frame for $\HH$, where the notion of continuous frames  generalizes that in \eqref {DSframes} \cite{Ali_1993,Ali_2000,Fornasier_2005,Gabardo_2003}. 
\begin{definition}
	Let $\HH$ be a complex Hilbert space and let $(\Omega, \mu)$ be a measure  space with positive measure $\mu$.   A mapping $F : \Omega \rightarrow \HH$ is called a  frame with respect to $(\Omega,\mu)$, if
	\begin{enumerate}[label=(\roman*)]
		\item F is weakly-measurable, i.e., $\omega \rightarrow \langle f, F(\omega)\rangle$ is a measurable
		function on $\Omega$  for all $ f\in\HH$;
		\item there exist constants $c$ and $C > 0$ such that
		\begin{equation}\label{ConFrame} 
		c\|f\|^2\leq \int_{\Omega}|\langle f,F(\omega)\rangle|^2d\mu(\omega)\leq C\|f\|^2,\text{ for all } f\in\HH.
		\end{equation}
	\end{enumerate}
	Here the constants $c$ and $C$ are called continuous frame (lower and upper) bounds. In addition, F is called a tight continuous frame if $c=C$.  The mapping F is called Bessel if the second inequality in \eqref{ConFrame} holds.
	In this case, C is called a Bessel constant.  
\end{definition}
The frame operator $S=S_{F}$ on $\HH$ associated with $F$ is defined in the weak sense by
$$S_{F}f=\int_{\Omega}\langle f,F(\omega)\rangle F(\omega)d\mu(\omega).$$
According to \eqref{ConFrame}, $S_F$ is well defined, invertible with bounded inverse (see \cite{Fornasier_2005}). %
Thus every $f\in\HH$ has the representations
\[f=S_{F}^{-1}S_F f=\int_{\Omega}\langle f,F(\omega)\rangle S_{F}^{-1}F(\omega)d\mu(\omega), \]
\[f=S_{F}S_F^{-1} f=\int_{\Omega}\langle f,S_F^{-1}F(\omega)\rangle F(\omega)d\mu(\omega). \]

If $\mu$ is the  counting measure and $\Omega=\mathbb{N}$, then one gets back the Duffin-Schaffer frame in \eqref {DSframes}.

%For this paper, 
In the sequel, 
$\Omega=\G\times[0,L]$, and $\mu$ is the product of the counting  measure on $\G$ and the Lebesgue measure on $[0,L]$. In this case,   $F$  is called a {\em semi-continuous frame} and \eqref {ConFrame} becomes
\begin {equation}\label {SCF}
c\|f\|^2\leq\sum\limits_{g\in\G}\int\limits_{0}^{L}|\la  f,A^tg\ra|^2 dt \leq C\|f\|^2,\text{ for all } f\in\HH.
\end{equation}
\subsection{Recent results on dynamical sampling and frames} 
Existing studies on various aspect of the dynamical sampling problem and related frame theory grew out of the  work in \cite{AADP13, ACCMP17, ACMT17, ADK13,  LuDV:11, RCLV11}, see, for example, \cite{AH17,  CMPP17,  CJS15,CH17, JT14,  JD15, JD17, Phi17,  ZLL17_2, ZLL17} and the references therein. However, except for a few, they all focus on  uniform discrete time-sets
$T\subseteq\{0,1,2,\ldots\}$, e.g., $T=\{1,\dots,N\}$ or $T=\N$ (see e.g., \cite {GRUV15}).

Even though the general dynamical sampling problem for discrete-time sets in finite dimensions (hence problems of systems and frames induced by iterations $\{A^ng:g \in \G, n \in T\}$) have been mostly  resolved  in \cite{ACMT17},  many problems and conjectures remain open for the infinite dimensional case. This state of affairs is not surprising because some of these problems take root in the  deep theory of functional analysis and operator theory such as the Kadison Singer Theorem  \cite {MSS15}, some open generalizations of the M\"untz-Sz\'asz Theorem \cite{Rudinrc}, and  the famous invariant subspace conjecture. 

When $T=\N$ and $A \in \mathcal B(\HH)$, it is not difficult to show that  
\begin{theorem}[\cite{AP17}]\label{noframeany}
	If, for an operator  $A\in B(\HH)$,  there exists a countable set of vectors $\G $  in $ \HH $ such that $\{A^n g\}_{g\in \G, \;n\geq 0}$ is  a frame in $\HH$, then   for every $f\in \HH$, $(A^*)^nf\to 0$ as $n\to \infty$. 		
\end{theorem}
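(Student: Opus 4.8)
The plan is to convert the frame inequality into a statement about the orbit $\{(A^*)^n f\}_{n\ge 0}$ by means of a shift trick. First I would restate the hypothesis in terms of the adjoint: since $\la f, A^n g\ra = \la (A^*)^n f, g\ra$, the assumption that $\{A^n g\}_{g\in\G,\,n\ge 0}$ is a frame with bounds $c,C>0$ is equivalent to
\[
c\,\norm{f}^2 \;\le\; \sum_{g\in\G}\sum_{n=0}^{\infty}\abs{\la (A^*)^n f, g\ra}^2 \;\le\; C\,\norm{f}^2, \qquad f\in\HH .
\]
All terms are nonnegative and $\G$ is countable, so by Tonelli's theorem this double sum is an unconditionally convergent sum over the countable index set $\G\times\N$; in particular the right-hand inequality says it is finite for each fixed $f$.

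Second — the one genuine idea — I would apply the \emph{lower} frame bound not to $f$ but to the vector $(A^*)^N f$. Using $\la (A^*)^N f, A^n g\ra = \la f, A^{N+n} g\ra = \la (A^*)^{N+n} f, g\ra$ and reindexing $m=N+n$, this yields
\[
c\,\norm{(A^*)^N f}^2 \;\le\; \sum_{g\in\G}\sum_{n=0}^{\infty}\abs{\la (A^*)^{N+n} f, g\ra}^2 \;=\; \sum_{g\in\G}\sum_{m=N}^{\infty}\abs{\la (A^*)^{m} f, g\ra}^2 \;=:\; R_N(f),
\]
so $R_N(f)$ is exactly the tail, beyond level $N$, of the series appearing in the first step.

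Finally, since that series has finite total (at most $C\norm{f}^2$), its tails tend to zero: given $\varepsilon>0$, choose a finite $F\subset\G\times\N$ whose complement contributes less than $\varepsilon$, and then $R_N(f)<\varepsilon$ as soon as $N$ exceeds every time index occurring in $F$. Hence $\norm{(A^*)^N f}^2 \le c^{-1} R_N(f) \to 0$ as $N\to\infty$, which is the claim. I do not anticipate any real obstacle; the only points needing a word of care are the countability of $\G$ (so the object in question is an honest series and the finite-truncation argument is legitimate) and the interchange in the order of summation, both of which are immediate because every summand is nonnegative. Note also that normality of $A$ plays no role in this argument — only $A^N A^n=A^{N+n}$ and boundedness are used.
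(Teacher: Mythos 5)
Your argument is correct and complete: applying the lower frame bound to $(A^*)^N f$ and identifying the resulting quantity as the tail of the convergent series $\sum_{g\in\G}\sum_{n\ge 0}|\la (A^*)^n f, g\ra|^2 \le C\|f\|^2$ is exactly the standard proof of this fact. The paper itself gives no proof (it cites \cite{AP17}), but your route coincides with the argument used there, so there is nothing to flag.
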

Thus, in particular it is not possible to construct frames using non-negative iterations when $A$ is a unitary operator. For example, the right-shift operator $S$ on $\HH=\ell^2(\N)$ generates an orthonormal basis for $\ell^2(\N)$ by iterations over $\G=\{(1,0,\dots,)\}$. Clearly,  $(S^*)^nf\to 0$ as $n\to \infty$ for this case.  However, if we change the space to $\HH=\ell^2(\Z)$, the right-shift operator $S$ becomes unitary, and there exists no subset $\G$ of $\ell^2(\Z)$ such that $\{S^n g\}_{g\in \G, \;n\geq 0}$ is  a frame for $\ell^2(\Z)$.  

On the other hand, for normal operators, it is possible to find frames of the form  $\{A^n g\}_{g\in \G, \;n\geq 0}$; however, no such a frame can be a basis \cite{ACCMP17}. 

Frames for $\HH$ can  be generated by the iterative action on a single vector $g$, i.e., there exist normal operators and associated cyclic vectors such that $\{A^n g\}_{\;n\geq 0}$ is a frame for $\HH$  \cite{ACMT17}. Specifically,
\begin{theorem} [\cite{ACCMP17}]\label {OnePointFrame}
	Let $A$ be a bounded normal operator on an infinite dimensional Hilbert space $\HH$. Then,  $\{A^n g\}_{n\geq 0}$ is a frame for $\HH$ if and only if the following five conditions are satisfied:
	(i) $A=\sum_j\lambda_jP_j$, where $ P_j $ are rank one orthogonal projections; 
	(ii) $|\lambda_k| < 1$  for all $k$;  
	(iii) $|\lambda_k| \to 1$; 
	(iv) $\{\lambda_k\}$ satisfies Carleson's condition 
	$
	\inf_{n} \prod_{k\neq n} \frac{|\lambda_n-\lambda_k|}{|1-\bar{\lambda}_n\lambda_k|}\geq \delta,
	$
	for some $\delta>0$; 
	and 
	(v) $0<c\le \frac {\|P_jg\|} {\sqrt{1-|\lambda_k|^2}}  \le C< \infty$, for some constants $c,  C$.
\end{theorem}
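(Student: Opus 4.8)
The plan is to pass, via the spectral theorem, to a concrete model in which $A$ becomes multiplication by the coordinate on $L^2$ of a measure on the disk, and then to recognize the frame condition as a statement about a system of Szeg\H{o} reproducing kernels in the Hardy space. Since a frame is complete, $g$ must be a cyclic vector for $A$, so the spectral theorem for normal operators gives a unitary $U\colon\HH\to L^2(\CC,\mu)$ with $UAU^{-1}=M_z$ and $Ug=\mathbf 1$, where $\mu$ is the scalar spectral measure of $A$ at $g$; under this identification $A^ng$ is $z^n$, the eigenvalues of $A$ are the atoms of $\mu$, and $\|P_jg\|^2$ is the $\mu$-mass of the corresponding atom. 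First I would invoke Theorem~\ref{noframeany}: it gives $(A^*)^nf\to0$ for all $f$, which in the model reads $\int|z|^{2n}|f|^2\,d\mu\to0$; testing against indicator functions forces $\supp\mu\subseteq\overline{\D}$ and $\mu(\partial\D)=0$, so $\mu$ is carried by the open disk $\D$.

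Next comes the key computation: with $k_\zeta(z)=(1-\bar\zeta z)^{-1}$ the Szeg\H{o} kernel of $H^2(\D)$ and $\{z^n\}_{n\ge0}$ its monomial orthonormal basis,
\[
\sum_{n\ge0}\bigl|\la f,A^ng\ra\bigr|^2=\sum_{n\ge0}\Bigl|\int f(w)\,\bar w^{\,n}\,d\mu(w)\Bigr|^2=\Bigl\|\int f(w)\,k_w\,d\mu(w)\Bigr\|_{H^2}^2 ,
\]
so, after the obvious unitary identifications, the analysis operator of $\{A^ng\}_{n\ge0}$ is $W\colon L^2(\mu)\to H^2$, $Wf=\int f(w)\,k_w\,d\mu(w)$, and $\{A^ng\}$ is a frame for $\HH$ precisely when $W$ is bounded above and bounded below. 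Boundedness of $W$ is equivalent to $\mu$ being a Carleson measure for $H^2$. The lower bound first forces $\mu$ to be purely atomic: if it were not, one could choose Borel sets $E_k$ shrinking to a point $w_0\in\D$ with $0<\mu(E_k)\to0$, and then $f_k=\mathbf 1_{E_k}/\mu(E_k)$ would satisfy $\|f_k\|_{L^2(\mu)}^2=1/\mu(E_k)\to\infty$ while, by continuity of $w\mapsto k_w$ into $H^2$ on $\D$, $\|Wf_k\|_{H^2}\to\|k_{w_0}\|_{H^2}<\infty$, contradicting the lower frame bound. Hence $\mu=\sum_j c_j\delta_{\lambda_j}$ with $c_j>0$ and the $\lambda_j$ distinct, which is exactly condition~(i) with $P_j$ the rank-one projection onto $\mathbf 1_{\{\lambda_j\}}$ and $c_j=\|P_jg\|^2$; the infinite dimension of $\HH$ forces infinitely many atoms. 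Evaluating the frame inequalities at the normalized eigenvectors $f_j=c_j^{-1/2}\mathbf 1_{\{\lambda_j\}}$ gives $c\le c_j\sum_{n\ge0}|\lambda_j|^{2n}\le C$, which at once yields $|\lambda_j|<1$ for all $j$ (condition~(ii)) and $c\le c_j/(1-|\lambda_j|^2)\le C$ (condition~(v)).

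It remains to match the frame property with the Carleson product condition~(iv). With $\mu=\sum_j c_j\delta_{\lambda_j}$ the identity above becomes
\[
\sum_{n\ge0}\bigl|\la f,A^ng\ra\bigr|^2=\Bigl\|\sum_j a_j\,c_j^{1/2}k_{\lambda_j}\Bigr\|_{H^2}^2,\qquad a_j=c_j^{1/2}f(\lambda_j),\qquad \|f\|_{\HH}^2=\sum_j|a_j|^2 ,
\]
and since $f\mapsto(a_j)_j$ is a unitary map of $\HH$ onto $\ell^2$, the system $\{A^ng\}$ is a frame for $\HH$ if and only if $\{c_j^{1/2}k_{\lambda_j}\}_j$ is a Riesz sequence in $H^2$. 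By condition~(v) the vector $c_j^{1/2}k_{\lambda_j}$ is norm-comparable to the normalized kernel $k_{\lambda_j}/\|k_{\lambda_j}\|=(1-|\lambda_j|^2)^{1/2}k_{\lambda_j}$, so this is equivalent to $\{k_{\lambda_j}/\|k_{\lambda_j}\|\}_j$ being a Riesz sequence, which by the classical Shapiro--Shields theorem (equivalently, Carleson's interpolation theorem) holds exactly when $\{\lambda_j\}$ is an interpolating sequence, i.e.\ exactly when~(iv) holds. Finally, an interpolating sequence is uniformly separated in the pseudohyperbolic metric, hence has no accumulation point in $\D$; being an infinite set of distinct points, it must then satisfy $|\lambda_j|\to1$, which is~(iii). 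The converse is obtained by running these equivalences backwards: (i)--(v) make $\{\lambda_j\}$ interpolating with $c_j\asymp 1-|\lambda_j|^2$, so $\{c_j^{1/2}k_{\lambda_j}\}$ is a Riesz sequence, $W$ is bounded above and below, and the frame inequalities hold.

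The two places I expect to demand real care are the atomicity step --- one must justify the continuity of $w\mapsto k_w$ into $H^2$ on the open disk and the construction of shrinking sets $E_k$ of positive but vanishing $\mu$-mass --- and the quantitative use of the Shapiro--Shields theorem, namely that the weighting $c_j\asymp 1-|\lambda_j|^2$ coming from~(v) is precisely what makes the kernel system norm-equivalent to a normalized one; the spectral-theorem reduction and the extraction of (ii), (iii), (v) should be routine once the Hardy-space model is in place.
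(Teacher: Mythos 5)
The paper does not prove Theorem \ref{OnePointFrame}; it is quoted from \cite{ACCMP17}, so there is no internal proof to compare your attempt against. Your proposal reconstructs essentially the argument of that cited source: pass to the multiplication model on $L^2(\mu)$ via the spectral theorem with $g$ cyclic, identify the analysis map $f\mapsto(\la f,A^ng\ra)_{n\ge 0}$ with synthesis against Szeg\H{o} kernels in $H^2$, force $\mu$ to be purely atomic from the lower frame bound, read off (ii) and (v) by testing on normalized eigenvectors, and convert (iv) into the Shapiro--Shields/Carleson characterization of Riesz sequences of normalized reproducing kernels. The outline is sound and complete; the only points needing polish are the ones you already flag, namely choosing the accumulation point $w_0$ strictly inside the disk (via inner regularity of the continuous part of $\mu$, since its closed support could a priori touch the boundary) and defining $Wf$ a priori through its Taylor coefficients, whose square-summability is exactly the upper frame bound, rather than as a Bochner integral.
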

It turns out that  if $A$ is normal in an infinite dimensional Hilbert space $\HH$, and $\{A^n g\}_{g\in \G, \;n\geq 0}$  is a frame  for some $ \G \subset \HH$ with $|\G| < \infty$, then $A$ is necessarily of the form described in Theorem \ref {OnePointFrame}:
\begin{theorem}[\cite{AP17}]\ \label{cor53}
	Let $A$ be a bounded normal operator in an infinite dimensional Hilbert space $\HH$.  If  the system of vectors  $\{A^n g\}_{g\in \G, \;n\geq 0}$  is a frame for some $ \G \subset \HH$ with $|\G| < \infty$, then  $A=\sum_j\lambda_jP_j$ where $ P_j $ are projections such that $\rank(P_j)\leq |\G|$ $\left(\text{i.e., the global multiplicity of $A$ is less than or equal to $ |\G| $}\right).$ In addition, (ii) and (iii) of Theorem \ref {OnePointFrame} are satisfied.
\end{theorem}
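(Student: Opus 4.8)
The plan is to combine the spectral theorem for the normal operator $A$ with a trace estimate for the frame operator restricted to spectral subspaces of $A$. Write $\G=\{g_1,\dots,g_m\}$, $m=|\G|$, and let $E$ be the projection-valued spectral measure of $A$, so that $A^n=\int z^n\,dE(z)$ and $\|(A^*)^nf\|^2=\langle (AA^*)^nf,f\rangle=\int|z|^{2n}\,d\mu_f(z)$ with $\mu_f(\cdot)=\langle E(\cdot)f,f\rangle$. Since $\{A^n g:g\in\G,\;n\ge0\}$ is a frame, Theorem~\ref{noframeany} gives $(A^*)^nf\to0$ for every $f\in\HH$, hence $\int|z|^{2n}\,d\mu_f\to0$; as $|z|^{2n}\ge1$ on $\{|z|\ge1\}$, this forces $\mu_f(\{|z|\ge1\})=0$ for all $f$, i.e.\ $E(\{|z|\ge1\})=0$. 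Thus $E$ is carried by the open disk $\D$, so $E(\D)=I$, and for every $\delta>0$ the spectral projection $P_\delta:=E(\{|z|\le1-\delta\})$ commutes with $A$, satisfies $\|AP_\delta\|\le1-\delta$, and $P_\delta\to I$ strongly as $\delta\to0^+$.

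Next I would show each $P_\delta$ has finite rank. Fix $\delta>0$ and set $\HH_\delta:=\operatorname{ran}P_\delta$, a reducing subspace on which $A$ acts as a normal operator $A_\delta:=A|_{\HH_\delta}$ with $\|A_\delta\|\le1-\delta$. Since $P_\delta$ commutes with $A$, for $f\in\HH_\delta$ one has $\langle f,A^ng_j\rangle=\langle f,A_\delta^n(P_\delta g_j)\rangle$, so $\{A_\delta^n(P_\delta g_j):1\le j\le m,\;n\ge0\}$ is a frame for $\HH_\delta$ with the same lower bound $c$; its frame operator $S_\delta$ then satisfies $S_\delta\ge cI_{\HH_\delta}$. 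On the other hand
\[
\operatorname{tr}S_\delta=\sum_{j=1}^m\sum_{n\ge0}\|A_\delta^n(P_\delta g_j)\|^2\le\sum_{j=1}^m\|g_j\|^2\sum_{n\ge0}(1-\delta)^{2n}=\frac{1}{1-(1-\delta)^2}\sum_{j=1}^m\|g_j\|^2<\infty .
\]
Since $c>0$, a finite trace is possible only if $\dim\HH_\delta<\infty$, so $\rank P_\delta<\infty$ for every $\delta>0$.

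I would then read off the structural conclusions. Each $A_\delta$ is a finite normal matrix, hence diagonalizable; letting $\delta\to0^+$ (with $P_\delta\to I$ strongly) shows $E$ is purely atomic, with atoms $\{\lambda_j\}\subset\D$, only finitely many in each $\{|z|\le1-\delta\}$, and $\sum_jE(\{\lambda_j\})=E(\D)=I$. Setting $P_j:=E(\{\lambda_j\})$ gives $A=\sum_j\lambda_jP_j$ with pairwise orthogonal $P_j$, and $\lambda_j\in\D$ yields $|\lambda_j|<1$, which is~(ii); since $\HH$ is infinite dimensional there are infinitely many $\lambda_j$ and each $\{|z|\le1-\delta\}$ contains only finitely many, so $|\lambda_j|\to1$, which is~(iii). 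For the multiplicity bound, fix an eigenvalue $\lambda=\lambda_j$ and $\HH_0:=\operatorname{ran}P_j$, on which $A$ acts as $\lambda I$; then for $f\in\HH_0$, $\langle f,A^ng_k\rangle=\bar\lambda^{\,n}\langle f,P_jg_k\rangle$, so $\sum_{k,n}|\langle f,A^ng_k\rangle|^2=(1-|\lambda|^2)^{-1}\sum_{k=1}^m|\langle f,P_jg_k\rangle|^2$. The lower frame bound makes this $\ge c\|f\|^2>0$ for every nonzero $f\in\HH_0$, so $P_jg_1,\dots,P_jg_m$ span $\HH_0$ and hence $\rank P_j=\dim\HH_0\le m=|\G|$.

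The one nonroutine step — and the main obstacle — is the trace estimate: one has to notice that restricting the frame to $\HH_\delta$ preserves the lower bound $c$, while the strict gap $\|A_\delta\|\le1-\delta$ forces the restricted frame operator to have finite trace bounded by a multiple of $\sum_{j=1}^m\|g_j\|^2$, which is incompatible with $S_\delta\ge cI_{\HH_\delta}$ unless $\HH_\delta$ is finite dimensional. Everything else is bookkeeping with the spectral theorem, and normality is used throughout: it makes $P_\delta$ commute with $A$, gives $\|AP_\delta\|\le1-\delta$, provides the identity $\|(A^*)^nf\|^2=\int|z|^{2n}\,d\mu_f$, and guarantees that the finite-dimensional restrictions $A_\delta$ are diagonalizable.
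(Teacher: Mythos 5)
The paper does not actually prove this statement---it is imported verbatim from \cite{AP17}---so there is no in-text proof to compare against; I can only judge your argument on its own terms, and it checks out. Your three-step structure is sound: (1) Theorem \ref{noframeany} plus the identity $\|(A^*)^nf\|^2=\int|z|^{2n}\,d\mu_f$ (valid since $A$ is normal) correctly forces $E(\{|z|\ge1\})=0$; (2) the trace estimate is the genuinely load-bearing step, and it is right: the restricted system $\{A_\delta^n P_\delta g_j\}$ inherits the lower bound $c$ because $P_\delta$ is a self-adjoint projection commuting with $A$, so $\langle f,A^ng_j\rangle=\langle f,A_\delta^nP_\delta g_j\rangle$ for $f\in\HH_\delta$, while $\operatorname{tr}S_\delta=\sum_{j,n}\|A_\delta^nP_\delta g_j\|^2\le\bigl(1-(1-\delta)^2\bigr)^{-1}\sum_j\|g_j\|^2$ together with $S_\delta\ge cI_{\HH_\delta}$ gives $\dim\HH_\delta\le\operatorname{tr}S_\delta/c<\infty$; (3) exhausting $\D$ by the disks $\{|z|\le1-1/k\}$ then yields pure atomicity of $E$, finitely many atoms in each such disk, hence $|\lambda_j|<1$ and $|\lambda_j|\to1$, and the computation $\sum_{k,n}|\langle f,A^ng_k\rangle|^2=(1-|\lambda|^2)^{-1}\sum_k|\langle f,P_jg_k\rangle|^2$ for $f\in\operatorname{ran}P_j$ correctly shows the $m$ vectors $P_jg_k$ must be complete in $\operatorname{ran}P_j$, giving $\rank P_j\le|\G|$. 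This last computation is essentially the same mechanism that underlies condition (v) of Theorem \ref{OnePointFrame}, and the overall trace-class/compactness route is the standard one in this literature; the only mild imprecision is the phrase ``infinitely many $\lambda_j$'' before you have justified it, but that follows immediately from $\rank P_\delta<\infty$ for each $\delta$ together with $\dim\HH=\infty$, so nothing is actually missing.
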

The necessary and sufficient conditions generalizing Theorem \ref {OnePointFrame} for the case $1<|G|<\infty$ have been derived in \cite {CMPP17}. 

Viewing Theorem \ref {OnePointFrame} from a different perspective, Christensen and Hasannasab ask whether a frame $\{h_n\}_{n \in I}$ has a representation of the form $h_n=A^nh_0$ for  some operator $A$ when $I=\N\cup \{0\}$ or $I=\Z$. This question is partially answered in  \cite{CH17OpRep} and gives rise to many new open problems and conjectures \cite{CH17}.

The set of self-adjoint operators is an important class of normal operators because it is often encountered in applications.  For this class, one can rule out  certain types of normalized frames.
\begin{theorem} [\cite{ACMT17}] \label{normframe}
	If $A$ is a self-adjoint operator on $\HH$, then the system  $\left\{\frac{A^n g}{\norm{A^n g}}\right\}_{g \in \G,\;n\geq 0}$ is not a frame for $\HH$.
\end{theorem}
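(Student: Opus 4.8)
The plan is to argue by contradiction. Assume the system $\Psi:=\{A^ng/\norm{A^ng}\}_{g\in\G,\,n\ge 0}$ is a frame for $\HH$; then it is in particular a Bessel family, say with bound $C$, and I would extract a contradiction using only the iterates of one well‑chosen generator — legitimate because restricting the frame sum to a subfamily only decreases it, so the bound $C$ still applies. Fix $g\in\G$ whose orbit never vanishes, i.e. $A^ng\neq 0$ for all $n\ge 0$; such a $g$ exists, since if every generator had a finite orbit the set $\Psi$ would be finite and could not be a frame in infinite dimensions (and the finite‑dimensional case is covered by the very same computation below). Put $\psi_n:=A^ng/\norm{A^ng}$ and $r_n:=\norm{A^{n+1}g}/\norm{A^ng}\in(0,\norm A]$, noting $r_0>0$ because $Ag\neq 0$. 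The phenomenon I want to isolate is that self‑adjointness forces the unit vectors $\psi_n$ to become asymptotically parallel to $\psi_{n+2j}$, for every fixed $j$, which a Bessel family cannot tolerate.

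For the computation I would first record the identity obtained by transferring $j$ copies of $A$ from the right argument to the left using $A=A^*$:
\[
\langle A^ng,\,A^{n+2j}g\rangle=\langle A^{n+j}g,\,A^{n+j}g\rangle=\norm{A^{n+j}g}^2 .
\]
Dividing by $\norm{A^ng}\,\norm{A^{n+2j}g}$ and telescoping the quotients yields
\[
\langle\psi_n,\psi_{n+2j}\rangle=\frac{\norm{A^{n+j}g}^2}{\norm{A^ng}\,\norm{A^{n+2j}g}}
=\frac{r_n r_{n+1}\cdots r_{n+j-1}}{r_{n+j}r_{n+j+1}\cdots r_{n+2j-1}}\;\in(0,1].
\]
Second, Cauchy–Schwarz applied to $\norm{A^{n+1}g}^2=\langle A^ng,A^{n+2}g\rangle\le\norm{A^ng}\,\norm{A^{n+2}g}$ rearranges into $r_n\le r_{n+1}$, so $(r_n)$ is non‑decreasing, bounded above by $\norm A$ and below by $r_0>0$; hence $r_n\nearrow r$ for some $r\in(0,\infty)$. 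Plugging this limit into the telescoped expression — it has equally many ($j$) factors in numerator and denominator, each tending to $r$ — gives $\langle\psi_n,\psi_{n+2j}\rangle\to r^{j}/r^{j}=1$ as $n\to\infty$, for each fixed $j\ge 0$.

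To finish I would invoke an elementary obstruction: a Bessel family with bound $C$ cannot contain a sequence $(\chi_k)_{k\ge 0}$ of unit vectors, sitting at pairwise distinct indices, with $\langle\chi_k,\chi_{k+j}\rangle\to 1$ as $k\to\infty$ for every fixed $j$. Indeed, fix an integer $M>C-1$; letting $k\to\infty$ we get $\sum_{j=0}^{M}\abs{\langle\chi_k,\chi_{k+j}\rangle}^2\to M+1>C$, while for every $k$ this partial sum is at most $\sum$ over the whole family $\le C\norm{\chi_k}^2=C$, a contradiction. Applying this with $\chi_k:=\psi_{2k}$ — these are members of $\Psi$ at the distinct indices $(g,2k)$, they are unit vectors, and by the previous step $\langle\psi_{2k},\psi_{2k+2j}\rangle\to1$ as $k\to\infty$ for each fixed $j$ — produces the desired contradiction. (In fact this shows $\Psi$ is never even a Bessel family, unless it is finite, in which case it still cannot be a frame in infinite dimensions.)

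A word on where the care is needed. The tempting first move — trying to prove $\langle\psi_n,\psi_{n+1}\rangle\to1$ — is simply false in general: if the scalar spectral measure $\mu_g$ of $g$ is symmetric about the origin, then $\langle A^ng,A^{n+1}g\rangle=\int x^{2n+1}\,d\mu_g=0$ for all $n$, so consecutive iterates need not align at all. The crux, and the one genuinely delicate choice, is to use an \emph{even} gap $2j$, where $\langle A^ng,A^{n+2j}g\rangle=\norm{A^{n+j}g}^2$ is automatically nonnegative and the ratio argument goes through unconditionally. Everything else — the monotonicity of $(r_n)$, the reduction to a single generator, and the Bessel obstruction — is routine.
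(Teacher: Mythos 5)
The paper does not prove this statement; it is quoted from \cite{ACMT17} without proof, so there is no internal argument to compare against. Your proof is correct and self-contained. The chain of steps checks out: the identity $\la A^n g, A^{n+2j}g\ra=\|A^{n+j}g\|^2$ is valid for self-adjoint $A$; Cauchy--Schwarz gives the log-convexity $\|A^{n+1}g\|^2\le\|A^ng\|\,\|A^{n+2}g\|$, hence $r_n\nearrow r\in(0,\|A\|]$; the telescoped ratio with $j$ factors in numerator and denominator then forces $\la\psi_n,\psi_{n+2j}\ra\to 1$ for each fixed $j$; and the finite-window Bessel obstruction (summing $M+1>C$ terms over distinct indices) is airtight. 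The even-gap trick and the diagonal formulation of the obstruction are well chosen: they let you avoid proving that $(\psi_{2k})$ is norm-convergent (which need not follow from your estimates, since $\la\psi_n,\psi_{n+2j}\ra$ is only controlled for fixed $j$ as $n\to\infty$), whereas the argument in \cite{ACMT17} is organized around extracting a convergent subsequence of the normalized iterates and testing the Bessel sum against its limit. The one imprecise point is your justification that some $g\in\G$ has a non-vanishing orbit: if $\G$ is infinite and every generator had a finite orbit, $\Psi$ need not be finite. But the gap closes immediately from self-adjointness: $A^{n+1}g=0$ with $n\ge 1$ gives $\|A^ng\|^2=\la A^{n-1}g,A^{n+1}g\ra=0$, so by downward induction $Ag=0$; hence an orbit either never vanishes or dies at the first step, and in the latter case the system contains undefined terms $A^ng/\|A^ng\|$ for $n\ge 1$. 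Since the statement implicitly assumes the system is well defined, every orbit is non-vanishing and any $g\in\G$ serves; you should replace your finiteness argument with this observation.
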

However, for normal operators, the following conjecture remains open:
\begin {conjecture} \label {normframeConj} The statement of Theorem \ref {normframe} holds  for    normal operators.
\end {conjecture}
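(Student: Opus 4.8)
The plan is to reduce to a normal contraction by scaling, use the spectral theorem to control $\norm{A^ng}$, strip off the part of the spectrum lying on the unit circle, and then confront the residual case, which is where the real difficulty lies. First, the normalized orbit is scaling invariant: $A^ng/\norm{A^ng}=(A/\norm{A})^ng/\norm{(A/\norm{A})^ng}$, so (the case $A=0$ being trivial) we may assume $\norm{A}=1$. By the spectral theorem, realize $A$ as multiplication by the coordinate on a direct sum of $L^2$ spaces; let $E$ be its projection-valued measure and, for $g\in\HH$, let $\mu_g$ be the scalar spectral measure of $g$, so that $\norm{A^ng}^2=\int|z|^{2n}\,d\mu_g(z)$ and, by dominated convergence, $\norm{A^ng}\to\norm{E(\partial\D)g}$.

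Next I would eliminate the unitary part $\HH_1:=\operatorname{ran}E(\partial\D)$. Suppose $\{A^ng/\norm{A^ng}\}$ is a frame with bounds $c,C$. For $f\in\HH_1$ and $k\ge1$, using $\norm{A^k}=\norm{A}^k\le1$ (hence $\norm{A^{m+k}g}\le\norm{A^mg}$), the substitution $n=m+k$, the identity $\langle f,A^ng\rangle=\langle(A^*)^kf,A^mg\rangle$, and the fact that $(A^*)^kf\in\HH_1$ is orthogonal to $\ker A$, one obtains
\[
\sum_{g\in\G}\sum_{n\ge k}\frac{|\langle f,A^ng\rangle|^2}{\norm{A^ng}^2}\ \ge\ \sum_{g\in\G}\sum_{m\ge0}\frac{|\langle(A^*)^kf,A^mg\rangle|^2}{\norm{A^mg}^2}\ \ge\ c\,\norm{(A^*)^kf}^2\ =\ c\,\norm{f}^2 ,
\]
where the last equality holds because $A$ restricts to a unitary operator on $\HH_1$. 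But the left-hand side is the $k$-tail of the convergent series $\sum_{g,n}\norm{A^ng}^{-2}|\langle f,A^ng\rangle|^2\le C\norm{f}^2$, so it tends to $0$ as $k\to\infty$; hence $f=0$, i.e.\ $E(\partial\D)=0$. In particular $\norm{A^ng}\to0$ for every $g$. (This is a normalized-orbit analogue of Theorem \ref{noframeany}.)

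The hard part will be the remaining case: $A$ normal with $\norm{A}=1$ and $E(\partial\D)=0$, so $\mu_g$ is carried inside the open unit disk but may accumulate on the circle. Here the self-adjoint argument behind Theorem \ref{normframe} does not carry over directly, since the polar decomposition $A=U|A|$ gives $A^ng/\norm{A^ng}=U^n\big(|A|^ng/\norm{|A|^ng}\big)$, and although the self-adjoint orbit $\{|A|^ng/\norm{|A|^ng}\}$ is not a frame by Theorem \ref{normframe}, applying a different unitary $U^n$ at each index could \emph{a priori} restore the frame property. My strategy would be to show that in this regime the system fails even to be Bessel. The guiding model is Theorem \ref{OnePointFrame}: when $A=\sum_j\lambda_jP_j$ with $|\lambda_j|\uparrow1$ satisfying Carleson's condition, $\norm{A^ng}^2=\sum_j|\lambda_j|^{2n}\norm{P_jg}^2$ decays only polynomially in $n$, so for a unit vector $f$ in $\operatorname{ran}E(\{|z|>1-\delta\})$ the reweighting by $\norm{A^ng}^{-2}$ pushes $\sum_n\norm{A^ng}^{-2}|\langle f,A^ng\rangle|^2$ beyond any fixed constant, contradicting the upper frame bound. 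To turn this into a proof one would: (i) fix $\delta$ small and a norm-one $f\in\operatorname{ran}E(\{|z|>1-\delta\})$; (ii) bound $\norm{A^ng}^2$ from above by the $\mu_g$-mass lying within distance $\sim1/n$ of $\partial\D$; (iii) bound $|\langle f,A^ng\rangle|^2$ from below on the matching range of $n$; (iv) sum over $n$ to violate Besselness, averaging over $\delta$ if needed. The principal obstacle is carrying out (ii)--(iii) for a general spectral measure $\mu_g$ --- neither atomic nor distributed according to Carleson's condition --- and simultaneously controlling the possibility that infinitely many $g\in\G$ cooperate to disperse the spectral mass; this is precisely the type of difficulty, related to open M\"untz-Sz\'asz-type questions, that keeps the general dynamical sampling problem unresolved. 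A realistic first milestone is the case $|\G|<\infty$: by Theorem \ref{cor53}, $A=\sum_j\lambda_jP_j$ with $\rank P_j\le|\G|$ and $|\lambda_j|\to1$, so the spectrum is discrete and steps (ii)--(iv) reduce to a tractable extension of the single-vector estimate above.
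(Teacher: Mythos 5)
This statement is Conjecture \ref{normframeConj}, which the paper explicitly leaves \emph{open}; there is no proof in the paper to compare your attempt against, so the only question is whether your argument actually settles it. It does not. What you have correctly established is a partial necessary condition. The scaling reduction to $\|A\|=1$ is valid, the identity $\|A^ng\|^2=\int|z|^{2n}\,d\mu_g\to\|E(\partial\mathbb{D})g\|^2$ is correct, and the tail argument eliminating the unitary part is sound: the shift $n=m+k$ together with $\|A^{m+k}g\|\le\|A^mg\|$ and the lower frame bound applied to $(A^*)^kf$ forces every $f\in\operatorname{ran}E(\partial\mathbb{D})$ to vanish. This is a legitimate normalized-orbit analogue of Theorem \ref{noframeany} and a worthwhile observation. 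Your diagnosis of why the self-adjoint proof does not transfer (the polar decomposition inserts a different unitary $U^n$ at each step) is also accurate.

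The gap is that the entire remaining case --- $A$ normal, $\|A\|=1$, $E(\partial\mathbb{D})=0$ --- is left as a program, and that case \emph{is} the conjecture: by Theorem \ref{OnePointFrame} this is exactly the regime in which unnormalized orbits can be frames, so nothing proved so far rules anything out. Steps (ii)--(iv) are heuristics calibrated on the atomic, Carleson-spaced model of Theorem \ref{OnePointFrame}; for a general scalar spectral measure $\mu_g$ (possibly continuous near $\partial\mathbb{D}$) you have no upper bound on $\|A^ng\|^2$ of the required form, no lower bound on $|\langle f,A^ng\rangle|^2$ over a matching range of $n$ (phases $\arg(z)^n$ can cause cancellation that does not occur in the self-adjoint case), and no mechanism to prevent infinitely many generators $g\in\G$ from redistributing spectral mass so that no single $f$ witnesses a Bessel violation. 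You name these obstacles yourself, which is honest, but it means the submission proves a lemma toward the conjecture rather than the conjecture. Even the proposed ``first milestone'' $|\G|<\infty$ is only sketched, not executed. As written, this cannot be accepted as a proof of the statement.
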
  
Conjecture \ref {normframeConj} does not hold if the operator is not normal.  For example, the shift-operator $S$ on $\ell^2(\N)$ defined by $S(x_1,x_2,\dots)=(0,x_1,x_2,\dots)$, is not normal, and $\{S^ne_1\}$  is an orthonormal basis for $\ell^2(\N)$, where $e_1=(1,0,\dots )$.

\subsection{Contributions and Organization}  
The present work concentrates on  systems of the form $\{A^tg:g\in\G,t\in[0,L]\} \subseteq \HH$,  where $A\in\mathcal B(\HH)$. The goal is to study the frame property of such systems. To this end, we need to derive some other properties in the intermediate steps. In particular, we study the completeness and Besselness of these systems. 

For the completeness of $\{A^tg:g\in\G,t\in[0,L]\}$,  necessary and sufficient conditions are derived in Section \ref{completeness section}. In light of the results in \cite{ACCMP17}, the form of the necessary and sufficient conditions are not surprising. However, the proofs and reductions to the known cases are appealing due to the use of certain techniques of complex analysis, and they are useful for the analysis of frames in the subsequent sections. 

The Bessel  property  of the system  $\{A^tg:g\in\G,t\in[0,L]\}$  is investigated in  Section \ref{bessel section}. Specifically, if $\HH$ is a finite dimensional space (e.g., $\CC^d$) and $A$ is a normal operator in $\HH$, then the system $\{A^tg\}_{g\in\G, t\in[0,L]}$ being Bessel is equivalent to the Besselness of $\G$ in the space $Range(A)$.  On the other hand, if $\HH$ is an infinite dimensional separable Hilbert space and $A$ is a bounded invertible normal operator, then the only condition ensuring that $\{A^tg\}_{g\in\G,t\in[0,L]}$ is Bessel is that $\G$ itself is a Bessel system in $\HH$. In addition, an example is described to explain that the non-singularity of $A$ is necessary for the equivalence between the Besselness of $\{A^tg\}_{g\in\G,t\in[0,L]}$ and that of $\G$. 

Section 5 is devoted to the relations between a semi-continuous frame $\{A^tg\}_{g\in\G,t\in[0,L]}$ generated by the action of an operator $A\in \mathcal{B}(\HH)$  and the discrete systems generated by  its  time discretization.  Specifically, we show that under some mild conditions, $\{A^{t}g\}_{g\in\G,t\in [0,L]}$ is a semi-continuous frame if and only if  there exists  $T=\{t_i:i=I\}\subseteq[0,L)$ with $|I|<\infty$ such that $\{A^{t}g\}_{g\in\G,t\in T}$ is a frame system in $\HH$. Additionally, Theorem \ref{SCFrSA} shows that under proper conditions, the property that $\{A^t g\}_{g\in\G, t\in [0,L]}$ is a semi-continuous frame is independent of $L$.

\section{Notation and preliminaries}\label{sec2}

\subsection{Normal operators}
Let $\mathcal{B}(\HH)$ denote the space of bounded linear operators on  a complex  separable Hilbert space $\HH$. In the sequel, all the operators are assumed to be normal. Normal operators have the following invertibility property (see \cite[Theorem 12.12]{rudinfa91}).

\begin{theorem}
	If $A\in\mathcal{B}(\HH)$, then $A$ is invertible $\left(\text{i.e., $A$ has bounded inverse}\right)$ if and only if there exists $c>0$ such that $\|Af\|\geq c\|f\|$ for all $f\in\HH$. 
\end{theorem}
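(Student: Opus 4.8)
The plan is to prove the two implications separately and to isolate surjectivity of $A$ as the essential point. For the forward implication, suppose $A$ is invertible with bounded inverse $A^{-1}$. Then for every $f\in\HH$ I would write $\norm{f}=\norm{A^{-1}Af}\le \norm{A^{-1}}\,\norm{Af}$, so that $\norm{Af}\ge c\,\norm{f}$ with $c=\norm{A^{-1}}^{-1}>0$. This direction is immediate and requires no structural assumption on $A$ at all.

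For the reverse implication I would start from the hypothesis $\norm{Af}\ge c\,\norm{f}$ and extract two consequences directly. First, injectivity: if $Af=0$ then $c\,\norm{f}\le 0$, forcing $f=0$. Second, closedness of the range: if $Af_n\to y$, then $\norm{A(f_n-f_m)}\ge c\,\norm{f_n-f_m}$ shows that $\{f_n\}$ is Cauchy; its limit $f$ satisfies $Af=y$ by continuity, so $y$ lies in the range and the range $R(A)$ is closed. At this stage $A$ is an injective bounded map with closed range, and, crucially, the bounded-below estimate already controls the inverse on the range: for $y=Af$ one has $\norm{A^{-1}y}=\norm{f}\le c^{-1}\norm{Af}=c^{-1}\norm{y}$, so no appeal to the open mapping theorem is needed for the norm bound once surjectivity is in hand.

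The main obstacle is exactly surjectivity: bounded-belowness yields a closed range but not, by itself, that $R(A)=\HH$. Since $R(A)$ is closed, fullness is equivalent to density, i.e. to $R(A)^{\perp}=\ker A^{*}=\{0\}$, whereas the hypothesis controls $\ker A$ and says nothing a priori about $\ker A^{*}$. This is the crux of the argument: for a general bounded operator the two kernels are unrelated, so bounded-belowness alone cannot force the range to be all of $\HH$ (any isometry onto a proper closed subspace is bounded below yet not surjective), and some additional structure must enter here. In the present setting that structure is the standing assumption that $A$ is normal: then $\norm{A^{*}f}^{2}=\la A A^{*}f,f\ra=\la A^{*}Af,f\ra=\norm{Af}^{2}$, so $A^{*}$ is bounded below by the same constant $c$ and hence $\ker A^{*}=\{0\}$. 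Therefore $R(A)$ is both dense and closed, so $R(A)=\HH$; combined with the bound $\norm{A^{-1}y}\le c^{-1}\norm{y}$ obtained above, this shows $A$ is invertible with $\norm{A^{-1}}\le c^{-1}$, completing the proof.
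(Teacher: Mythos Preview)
Your proof is correct. The paper does not supply its own argument for this statement; it simply cites \cite[Theorem 12.12]{rudinfa91} under the standing hypothesis that all operators are normal, and your proof is exactly the standard one found there: bounded-belowness gives injectivity and closed range, normality gives $\norm{A^{*}f}=\norm{Af}$ and hence $\ker A^{*}=\{0\}$, so the closed range is dense and therefore all of $\HH$. Your explicit observation that the result fails without normality (e.g., for a non-unitary isometry) is a useful clarification, since the theorem as printed omits the word ``normal'' and relies on the blanket assumption stated just before it.
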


For completeness, the spectral theorem with multiplicity is stated below, and the following notation is used in its statement. 

For a  non-negative regular Borel measure $\mu$ on $\mathbb{C}$, $N_{\mu}$  will denote the multiplication operator acting on  $ L^2(\mu)$, i.e., for a $\mu$-measurable function $f:\CC\to \CC$ such that $\int_\CC |f(z)|^2d\mu(z)< \infty$, 
$$N_{\mu}f(z)=zf(z).$$

We will use the notation $[\mu]=[\nu]$ to denote two mutually absolutely continuous measures $\mu$ and $\nu$.

The operator $N_{\mu}^{(k)}$ will denote the direct sum of $k$ copies of $N_{\mu}$, i.e.,
\begin{equation*}
(N_\mu)^{(k)}=\oplus_{i=1}^{k}N_{\mu}.
\end{equation*}
Similarly, the space $(L^2(\mu))^{(k)}$ will denote the direct sum of $k$ copies of $L^2(\mu)$. %i.e.,
%\begin{equation*}
%	(L^2(\mu))^{(k)}=\bigoplus_{i=1}^{k}L^2(\mu).
%\end{equation*}

%For a Borel non-negative measure $\mu$, we will denote by $[\mu]$ the class of Borel measures that are mutually absolutely continuous with  $\mu$.

\begin{theorem}[\textbf{Spectral theorem with multiplicity}]\label{spectral theorem} For any normal operator $A$ on $\HH$ there are mutually singular  non-negative  Borel measures $\mu_j, 1\leq j\leq\infty$, such that $A$ is equivalent to the operator
	$$N_{\mu_{\infty}}^{(\infty)}\oplus N_{\mu_1}
	\oplus N_{\mu_2}^{(2)}\oplus\ldots,$$
	i.e., there exists a unitary transformation
	$$U:\HH\rightarrow(L^2(\mu_{\infty}))^{(\infty)}\oplus L^2(\mu_1)\oplus (L^2(\mu_2))^{(2)}\oplus\ldots$$ such that
	\begin{equation}\label{representation of normal}
	UAU^{-1}=N_{\mu_{\infty}}^{(\infty)}\oplus N_{\mu_1}\oplus N_{\mu_2}^{(2)}\oplus\ldots.
	\end{equation}
	Moreover, if $\tilde{A}$ is another normal operator with corresponding measures $\nu_{\infty},\nu_1,\nu_2,\ldots$, then $\tilde{A}$ is unitary equivalent to A if and only if $[\nu_j]=[\mu_j]$ for $j=1,\ldots,\infty.$
\end{theorem}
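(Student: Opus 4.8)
The plan is to derive the multiplicity form from the ordinary spectral theorem for normal operators --- the existence of a projection-valued Borel measure $E$ on $\CC$ with $A=\int z\,dE(z)$, which I will take as given --- by first cutting $\HH$ into cyclic pieces, then regrouping those pieces by multiplicity, and finally arguing that the classes $[\mu_j]$ are uniquely determined. For the cyclic decomposition: given $x\in\HH$, let $\HH_x$ be the smallest closed subspace containing all $E(\omega)x$ with $\omega\subseteq\CC$ Borel; it reduces $A$, and $E(\omega)x\mapsto\mathbf 1_\omega$ extends to a unitary $U_x:\HH_x\to L^2(\mu_x)$ with $\mu_x(\omega)=\la E(\omega)x,x\ra$, carrying $A|_{\HH_x}$ to $N_{\mu_x}$. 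A Zorn's lemma argument produces a maximal family $\{x_\alpha\}$ whose cyclic subspaces are pairwise orthogonal; maximality forces $\HH=\bigoplus_\alpha\HH_{x_\alpha}$, and separability of $\HH$ makes the index set countable. Thus $A$ is equivalent to $\bigoplus_n N_{\nu_n}$ for finitely or countably many finite Borel measures $\nu_n$.

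Next, the regrouping. After rescaling each $\nu_n$ (which leaves $N_{\nu_n}$ unchanged up to equivalence) I may assume $\lambda:=\sum_n\nu_n$ is a finite measure; write $h_n=d\nu_n/d\lambda$ and $E_n=\{h_n>0\}$. The map $f\mapsto f\,h_n^{1/2}$ is a unitary $L^2(\nu_n)\to\mathbf 1_{E_n}L^2(\lambda)$ intertwining $N_{\nu_n}$ with $N_\lambda|_{\mathbf 1_{E_n}L^2(\lambda)}$, so $\bigoplus_n N_{\nu_n}$ is equivalent to $N_\lambda$ acting on $\bigoplus_n\mathbf 1_{E_n}L^2(\lambda)$. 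Let $m(z)=\sum_n\mathbf 1_{E_n}(z)\in\{0,1,2,\dots,\infty\}$, put $\Omega_k=\{m=k\}$, and set $\mu_k=\lambda|_{\Omega_k}$ for $1\le k\le\infty$; these are mutually singular and $\lambda=\sum_k\mu_k$. Over $\Omega_k$ the space $\bigoplus_n\mathbf 1_{E_n}L^2(\lambda)$ has constant fiber dimension $k$, and enumerating, for each $z$, the set $\{n:z\in E_n\}$ in increasing order $n_1(z)<n_2(z)<\cdots$ yields Borel functions that rearrange $\bigoplus_n\mathbf 1_{E_n\cap\Omega_k}L^2(\lambda)$ unitarily onto $(L^2(\mu_k))^{(k)}$ while intertwining the multiplication operators. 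Assembling over $k$ gives the claimed form $N_{\mu_\infty}^{(\infty)}\oplus N_{\mu_1}\oplus N_{\mu_2}^{(2)}\oplus\cdots$.

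For uniqueness, suppose $A$ is also equivalent to $N_{\nu_\infty}^{(\infty)}\oplus\bigoplus_k N_{\nu_k}^{(k)}$. First, $[\sum_j\mu_j]=[\sum_j\nu_j]$, since both coincide with the maximal spectral type of $A$ --- the smallest measure class $[\rho]$ with $E(\omega)=0\iff\rho(\omega)=0$ --- which is a unitary invariant. Next, with $\lambda=\sum_j\mu_j$ and $m$ the multiplicity function of the first representation, I would check that $[\lambda|_{\{m\ge n\}}]$ is a unitary invariant for every $n$: the set $\{m\ge n\}$ is, modulo $\lambda$-null sets, the support of the maximal spectral type of the ``multiplicity $\ge n$'' part of $A$, which is isolated intrinsically via the abelian von Neumann algebra generated by $A$ and its commutant (its homogeneous type $I_n$ summand). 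Since $\mu_k=\lambda|_{\{m\ge k\}\setminus\{m\ge k+1\}}$ and $\mu_\infty=\lambda|_{\bigcap_n\{m\ge n\}}$, the classes $[\mu_j]$ are thereby determined by $A$, giving $[\mu_j]=[\nu_j]$ for all $j$.

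The main obstacle is twofold. In the regrouping step one must perform the index rearrangement over $\Omega_k$ in a Borel-measurable way; this is concrete here (take successive minima of $\{n:z\in E_n\}$) but has to be checked carefully. More substantial is the invariance of the multiplicity function in the uniqueness step --- i.e.\ that the fiber dimension is intrinsic to $A$ and not an artifact of the chosen diagonalization --- which is the genuinely delicate point and is handled either through a direct-integral/measurable-selection argument or through the type decomposition of the associated abelian von Neumann algebra.
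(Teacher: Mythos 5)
The paper does not prove this theorem: it is stated as background, with the proof delegated to Conway (\cite[Ch.~IX, Theorem 10.16]{conway}, \cite[Theorem 9.14]{conway1}), so there is no in-paper argument to compare against. Your outline is essentially the proof from those references: cyclic decomposition via the spectral measure and Zorn's lemma, consolidation of the cyclic pieces into a single dominating measure $\lambda$ with a Borel multiplicity function $m$, restriction to the level sets $\{m=k\}$ to produce the mutually singular $\mu_k$, and uniqueness via invariance of the maximal spectral type and of $m$. The existence half of your sketch is complete and correct, including the Borel rearrangement over $\Omega_k$ (the sets $\{z:n_i(z)=n\}$ are manifestly Borel, and the map $(f_n)_n\mapsto (f_{n_i(\cdot)}(\cdot))_{i=1}^k$ is isometric and intertwining). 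Two caveats on the uniqueness half: first, the genuinely hard step --- that the multiplicity function is a unitary invariant --- is only gestured at (via the type decomposition of the commutant or a direct-integral argument), and that is where most of the work in Conway's proof actually sits; second, you only address the forward implication of the ``if and only if,'' whereas the converse (that $[\mu_j]=[\nu_j]$ for all $j$ forces unitary equivalence) also needs a line, namely that $[\mu]=[\nu]$ implies $N_\mu\cong N_\nu$ via $f\mapsto (d\mu/d\nu)^{1/2}f$. As a blind reconstruction of a cited classical theorem, the proposal is sound in structure and faithful to the standard proof, but it is a sketch rather than a complete argument at the multiplicity-invariance step.
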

A proof of the theorem can be found in \cite[Ch. IX, Theorem 10.16]{conway} and \cite[Theorem 9.14]{conway1}. 

Since the measures $\mu_j$ are mutually singular, there are mutually disjoint  Borel sets $\{\mathcal{E}_j\}_{j=1}^{\infty}\cup \{\mathcal{E}_\infty\}$ such that $\mu_j$ is supported on $\mathcal{E}_j$ for every $1\leq j\leq\infty$.
The scalar-valued spectral measure $\mu$ associated with the normal operator $A$ is defined as
\begin{equation}\label{scalar spectral measure}
\mu=\sum_{1\leq j\leq\infty}\mu_j.
\end{equation}

The Borel function $m_{A}:\mathbb{C}\rightarrow \mathbb{N}^*\cup\{0\}$ given by
\begin{equation}
m_{A}(z)=\infty\cdot\chi_{\mathcal{E}_{\infty}}(z)+\sum_{j=1}^{\infty}j\chi_{\mathcal{E}_j}(z)
\end{equation}
is called the multiplicity function of the operator $A$, where $\mathbb{N}$ is the set of natural numbers starting with $1$, $\mathbb{N}^*=\mathbb{N}\cup\{\infty\}$, $\chi_{E}(z)$ is the characteristic function on set $E$  defined by  $\chi_{E}(z)=\begin{cases}
1,z\in E\\
0, \text{otherwise}
\end{cases}$ and $\infty\cdot 0=0.$

From Theorem \ref{spectral theorem}, every normal  operator is uniquely determined, up to a unitary equivalence,  by the pair $([\mu],m_A)$. 

For $j\in \N$, let $\Om_j$  be the set $ \{1,...,j\}$  and let $\Om_\infty$  be the set $\N$. Then $\ell^2(\Om_j) \cong \CC^j$, for $j\in \N$, and $\ell^2(\Omega_\infty) = \ell^2(\N).$ For $j=0$, we use $\ell^2(\Omega_0)$ to represent the trivial space $\{0\}$.

Let $\mathcal{W}$ be the Hilbert space 
$$\mathcal{W} =(L^2(\mu_\infty))^{(\infty)}\oplus L^2(\mu_1)\oplus (L^2(\mu_2))^{(2)}\oplus\cdots$$ 
associated with the operator $A$ and
let $U:\HH \rightarrow \mathcal{W}$ be the unitary operator given by Theorem~\ref{spectral theorem}.  If $g\in \HH$, we denote by $\wi{g}$
the image of $g$ under $U$. Since $\wi{g} \in \mathcal{W}$, one has   $\wi{g} = (\wi{g}_j)_{j\in \N^*}$, where 
$\wi{g}_j$ is the restriction of $\wi g$ to $(L^2(\mu_j))^{(j)}$. Thus, for any $j\in \N^*$, $ \wi{g}_j $ is a function from $ \CC $ to $ \ell^2(\Omega_j) $ and
$$\sum_{j\in \N^*} \quad \int_\CC \|\wi{g}_j(z)\|_{\ell^2(\Om_j)}^2d\mu_j(z) =\|g\|^2<\infty .$$
Let $P_j$ be the projection defined for every  $\wi g \in \mathcal W$  by $P_j\wi g=\wi f$, where $\wi f_j=\wi g_j$ and $\wi f_k=0$ for $k\ne j$.

Let $E$ be the spectral measure for the normal operator $A$. Then, for every $\mu$-measurable set $G\subseteq \CC$ and vectors $f,g$ in $\HH$, one has the following formula
$$\la E(G)f,g\ra _{\HH}\; = \int_G\left[\sum_{1\leq j\leq\infty} \chi_{\mathcal{E}_j}(z)\la \wi f_j(z),\wi g_j(z)\ra _{\ell^2(\Om_j)}\right]d\mu(z),$$
which relates the spectral measure of $A$ to the scalar-valued spectral measure of $A$.
\begin{definition}
	Given a normal operator $A$, $A^t$ is defined as follows: $$A^t:\HH\rightarrow\HH$$ by
	$$\langle A^tf_1, f_2\rangle=\int_{z\in\sigma(A)}z^t\langle \tilde{f}_1(z),\tilde{f}_2(z)\rangle d\mu(z), \text{ for all }f_1,f_2\in\HH,$$ where $z^t=\exp(t(\ln(|z|)+i\arg(z)))$ and $\arg(z)\in[-\pi,\pi).$
	
	Using the fact that $\exp(i\arg(z)+i\arg(\bar z))=1,$ it follows that $(A^*)^t=(A^t)^*$ for $t\in\mathbb{R}$. 
	
\end{definition}
Section  \ref{completeness section} will exploit the reductive operators which were introduced by P.Halmos and J.Wermer \cite{halmos,wermer}. For clarity, the definition is given as follows. 
\begin{definition}
	A closed subspace $V\subseteq\HH$ is called reducing for the operator $A$ if both $V$ and its orthogonal complement $V^{\perp}$ are invariant subspaces of $A$. 
\end{definition}
\begin{definition}
	An	operator $A$ is called reductive if every invariant subspace of $A$ is reducing.
\end{definition}
\subsection{Holomorphic Function}
The techniques of complex analysis, e.g., the properties of holomorphic functions (see \cite{Conway_1973,Rudinrc} and the references therein), are used extensively in the present work, including Montel's Theorem as stated below.
\begin{definition}[\textbf{Normal family}]
	A family $\mathfrak{F}$ of holomorphic functions in a region $X$ of the complex plane with values in $\mathbb{C}$ %\subseteq \mathbb{C}$ $\left(\text{with complex-valued function}\right)$
	is called normal if every sequence in $\mathfrak{F}$ contains a subsequence which converges uniformly  to a holomorphic function on compact subsets of X. 
\end{definition}
\begin{theorem}[\textbf{Montel's Theorem}] \label{Montel's theorem} A uniformly bounded family of holomorphic functions defined on an open subset of  the complex numbers is normal.
\end{theorem}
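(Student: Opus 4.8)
The plan is to reduce Montel's theorem to the Arzel\`a--Ascoli theorem: I would show that a uniformly bounded family $\mathfrak{F}$ of holomorphic functions on an open set $X\subseteq\CC$ is locally equicontinuous, then extract via a diagonal argument a subsequence that converges uniformly on a compact exhaustion of $X$ (hence on every compact subset), and finally check that the limit is holomorphic.

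First I would establish the equicontinuity. Put $M=\sup_{f\in\mathfrak{F}}\sup_{z\in X}|f(z)|<\infty$. Fix $z_0\in X$ and pick $r>0$ with $\overline{D(z_0,2r)}\subseteq X$. For $z,w\in D(z_0,r)$ and any $f\in\mathfrak{F}$, Cauchy's integral formula on the circle $|\zeta-z_0|=2r$ gives
\[
f(z)-f(w)=\frac{1}{2\pi i}\int_{|\zeta-z_0|=2r}f(\zeta)\left(\frac{1}{\zeta-z}-\frac{1}{\zeta-w}\right)d\zeta=\frac{z-w}{2\pi i}\int_{|\zeta-z_0|=2r}\frac{f(\zeta)}{(\zeta-z)(\zeta-w)}\,d\zeta,
\]
and since $|\zeta-z|,|\zeta-w|\ge r$ on the contour, $|f(z)-f(w)|\le \tfrac{2M}{r}|z-w|$. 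Thus $\mathfrak{F}$ is uniformly Lipschitz, hence equicontinuous, on $D(z_0,r)$ with a constant independent of $f$.

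Next I would pass to a globally convergent subsequence. Write $X=\bigcup_n K_n$ with $K_n$ compact, $K_n\subseteq\mathrm{int}\,K_{n+1}$ (such an exhaustion exists for any open $X\subseteq\CC$). On each $K_n$, $\mathfrak{F}$ is uniformly bounded by $M$ and, covering $K_n$ by finitely many disks as above, equicontinuous; so by Arzel\`a--Ascoli any sequence in $\mathfrak{F}$ has a subsequence converging uniformly on $K_n$. Given an arbitrary $(f_k)\subseteq\mathfrak{F}$, extract nested subsequences convergent uniformly on $K_1,K_2,\dots$ and take the diagonal; it converges uniformly on every $K_n$, hence on every compact $K\subseteq X$, since such a $K$ lies in some $K_n$. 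Denote the limit by $f$. Finally I would verify $f$ is holomorphic: it is continuous as a locally uniform limit of continuous functions, and for every closed triangle $\Delta\subseteq X$ one has $\int_{\partial\Delta}f_{k_j}=0$ by Cauchy--Goursat, so uniform convergence on $\partial\Delta$ yields $\int_{\partial\Delta}f=0$, and Morera's theorem gives holomorphy of $f$. Hence $\mathfrak{F}$ is normal.

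There is no deep obstacle here; the only point requiring care is the bookkeeping of the diagonal extraction together with the choice of the exhaustion $K_n$ and the covering disks, so that the Cauchy estimate is applied \emph{locally} and the equicontinuity constant on each $K_n$ is genuinely uniform over $\mathfrak{F}$ --- this is what lets the argument go through even when $X$ is unbounded or disconnected.
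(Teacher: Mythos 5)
Your proof is correct and complete: the Cauchy-estimate argument gives the uniform Lipschitz bound $\lvert f(z)-f(w)\rvert\le \frac{2M}{r}\lvert z-w\rvert$ on $D(z_0,r)$, Arzel\`a--Ascoli plus the diagonal extraction over a compact exhaustion yields locally uniform convergence of a subsequence, and Morera's theorem shows the limit is holomorphic, which is exactly the paper's definition of a normal family. Note, however, that the paper does not prove this statement at all --- Montel's theorem is quoted in Section 2 as a classical background result with a pointer to the complex-analysis references --- so there is no proof in the paper to compare against; what you have written is the standard textbook argument and is a valid substitute for the citation.
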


\section{Completeness}\label{completeness section}
In this section, we characterize  the completeness of the system $\{A^tg\}_{g\in\G,t\in[0,L]}$, where $A$ is a (reductive) normal operator on a  separable Hilbert space $\HH$,  $\G$ is a set of vectors in $\HH$, and $L$ is a finite positive number. 
\begin{theorem}\label{theorem 3.1}
	Let $A\in\mathcal{B}(\HH)$ be a normal operator,  
	and let $\G$ be a countable set of vectors in $\HH$ such that $\{A^tg\}_{g\in\G,t\in [0,L]}$ is complete in $\HH$. Let $\mu_{\infty},\mu_1,\mu_2,\ldots$ be the measures in the representation \eqref{representation of normal} of the operator $A$. Then for every $1\leq j\leq\infty$ and $\mu_j$-a.e. $z$, the  system of  vectors $\{\tilde{g}_j(z)\}_{g\in\G}$ is complete in $\ell^2(\Omega_j)$.
	
	If $A$ is also reductive, then $\{A^tg\}_{g\in\G,t\in [0,L]}$ being complete in $\HH$ is equivalent to $\{\tilde{g}_j(z)\}_{g\in\G}$ being complete in $\ell^2(\Omega_j)$ $\mu_j$-a.e. $z$ for every $1\leq j\leq\infty$.
\end{theorem}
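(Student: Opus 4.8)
The plan is to argue throughout in the spectral model of Theorem~\ref{spectral theorem}, identifying $\HH$ with $\mathcal{W}=\bigoplus_{j}(L^2(\mu_j))^{(j)}$ so that the image of $A^tg$ under $U$ is the function $z\mapsto z^t\wi{g}(z)$ and, for $f,g\in\HH$ and $t\ge 0$,
\[
\la A^t g,f\ra=\int_{\sigma(A)} z^t\,\la\wi{g}(z),\wi{f}(z)\ra\,d\mu(z),\qquad \la\wi{g}(z),\wi{f}(z)\ra=\sum_{j}\chi_{\mathcal E_j}(z)\,\la\wi{g}_j(z),\wi{f}_j(z)\ra_{\ell^2(\Omega_j)}.
\]
The first assertion (completeness of $\{A^t g\}$ forces fiberwise completeness) needs no reductivity and is proved by contraposition via a measurable selection. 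For the equivalence, the additional content is the reverse implication: I would first use complex analysis to enlarge the time interval $[0,L]$ to $[0,\infty)$, then observe that the resulting span is $A$-invariant — hence, by reductivity, reducing — and finally read off the conclusion from the direct-integral description of reducing subspaces of a normal operator.

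\emph{Step 1 (completeness $\Rightarrow$ fiberwise completeness).} Suppose for some $j_0$ the set $S=\{z:\{\wi{g}_{j_0}(z)\}_{g\in\G}$ is not complete in $\ell^2(\Omega_{j_0})\}$ has $\mu_{j_0}(S)>0$ (this set has finite measure since $\sigma(A)$ is compact). Let $P(z)$ be the orthogonal projection of $\ell^2(\Omega_{j_0})$ onto $K(z):=\big(\overline{\spn}\{\wi{g}_{j_0}(z)\}_{g\in\G}\big)^{\perp}$; the field $z\mapsto P(z)$ is weakly measurable, being a pointwise limit of finite-rank projections built from Gram matrices of the $\wi{g}_{j_0}(z)$. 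Fixing a countable dense set $\{e_m\}\subset\ell^2(\Omega_{j_0})$, set $m(z)=\min\{m:P(z)e_m\neq 0\}$ and $v(z)=P(z)e_{m(z)}/\|P(z)e_{m(z)}\|$ for $z\in S$; then $v$ is a measurable unit-vector field with $v(z)\perp\wi{g}_{j_0}(z)$ for every $g\in\G$. Taking $f\in\HH$ to be the vector whose image in $\mathcal{W}$ is $v\,\chi_S$ in the $j_0$-th block and $0$ in the others, we get $f\neq 0$ and
\[
\la A^t g,f\ra=\int_S z^t\,\la\wi{g}_{j_0}(z),v(z)\ra_{\ell^2(\Omega_{j_0})}\,d\mu_{j_0}(z)=0\qquad(g\in\G,\ t\in[0,L]),
\]
so $\{A^t g\}_{g\in\G,\,t\in[0,L]}$ is not complete; this is the contrapositive of the first claim.

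\emph{Steps 2--3 (the reductive case).} Let $f\in\HH$ be orthogonal to $\{A^t g\}_{g\in\G,\,t\in[0,L]}$ and fix $g\in\G$. Put $\phi_g(z)=\la\wi{g}(z),\wi{f}(z)\ra$, which lies in $L^1(\mu)$ by Cauchy--Schwarz, and $\Phi_g(t)=\int_{\sigma(A)}z^t\phi_g(z)\,d\mu(z)$. Since $\sigma(A)$ is compact and $\arg z\in[-\pi,\pi)$, the factor $|z^t|=|z|^{\operatorname{Re} t}e^{-(\operatorname{Im} t)\arg z}$ is bounded on $\sigma(A)$, locally uniformly in $t$ over $\{\operatorname{Re} t>0\}$, so Morera's theorem and dominated convergence make $\Phi_g$ holomorphic there. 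It vanishes on $(0,L]$, a set with a limit point inside the half-plane, hence $\Phi_g\equiv 0$; combined with $\la g,f\ra=\la A^0 g,f\ra=0$ this yields $f\perp A^t g$ for all $t\ge 0$, whence $V:=\overline{\spn}\{A^t g:g\in\G,\ t\in[0,L]\}=\overline{\spn}\{A^t g:g\in\G,\ t\ge 0\}$. Since $A\,A^t g=A^{t+1}g\in V$, the subspace $V$ is $A$-invariant; as $A$ is reductive, $V$ is reducing, so $V^{\perp}$ is a reducing subspace of the normal operator $A$. By the direct-integral description of the commutant of a normal operator — and using that the $\mu_j$ are mutually singular, so $V^{\perp}$ splits along the multiplicity blocks — there are measurable fields of closed subspaces $z\mapsto K_j(z)\subseteq\ell^2(\Omega_j)$ with $V^{\perp}\cong\{\wi{h}:\wi{h}_j(z)\in K_j(z)\ \mu_j\text{-a.e.}\ \forall j\}$, hence $V\cong\{\wi{h}:\wi{h}_j(z)\in K_j(z)^{\perp}\ \mu_j\text{-a.e.}\ \forall j\}$. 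Each $g\in\G$ lies in $V$, so $\wi{g}_j(z)\in K_j(z)^{\perp}$ a.e. for all $g$; therefore, if $\{\wi{g}_j(z)\}_{g\in\G}$ is complete in $\ell^2(\Omega_j)$ for a.e. $z$ and every $j$, then $K_j(z)^{\perp}=\ell^2(\Omega_j)$, i.e. $K_j(z)=\{0\}$, a.e., so $V^{\perp}=\{0\}$, $V=\HH$, and $\{A^t g\}_{g\in\G,\,t\in[0,L]}$ is complete. Together with Step 1 this gives the asserted equivalence.

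The main obstacle is the two measure-theoretic ingredients: the measurable unit-vector selection $v(z)$ in Step 1, and — more substantially — the direct-integral representation of the reducing subspace $V^{\perp}$ in Step 3, which is the only place reductivity is genuinely used and which must be carried out within the multiplicity decomposition. The analytic Step 2 is brief but indispensable, as it is precisely what turns the continuum of times $[0,L]$ into an $A$-invariant subspace; one should check, as above, that the boundedness of $\arg z$ together with compactness of $\sigma(A)$ make it work even when $A$ fails to be invertible.
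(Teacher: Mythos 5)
Your proof is correct, but it reaches the conclusion by a genuinely different route in the two places where the paper imports outside results. For the first assertion, the paper restricts to a countable dense time set $T\ni 0$, notes that $\{N_{\mu_j}^t\wi g_j\}_{g\in\G,t\in T}$ is a complete countable family in $(L^2(\mu_j))^{(j)}$, and invokes Kriete's lemma (Lemma \ref{lemma 3.5}) to pass to $\mu_j$-a.e.\ fiberwise completeness; your contrapositive via a measurable unit-vector selection $v(z)\perp\{\wi g_{j_0}(z)\}_{g\in\G}$ is in effect a self-contained proof of that lemma, at the cost of the measurable-selection bookkeeping (and note that finiteness of $\mu_{j_0}(S)$ should be justified by finiteness, or at least $\sigma$-finiteness, of the scalar spectral measures rather than by compactness of $\sigma(A)$ alone --- though one can always shrink $S$ to a finite-measure subset, so this is cosmetic). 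Your analytic continuation step coincides with Lemma \ref{lemma 3.6}, with the harmless difference that you work on $\{\Re(t)>0\}$ rather than $\{\Re(t)>L/2\}$. For the converse under reductivity, the paper converts $F\equiv 0$ into the vanishing of all moments $\int z^n\la\wi g(z),\wi f(z)\ra\,d\mu(z)$ and applies Wermer's characterization (Proposition \ref{proposition reductive}) to conclude $\la\wi g_j(z),\wi f_j(z)\ra=0$ a.e.; you instead observe that $V=\overline{\spn}\{A^tg\}_{g\in\G,\,t\ge 0}$ is $A$-invariant, hence reducing, and use the decomposability of projections in the commutant of a normal operator (via Fuglede's theorem and the block-diagonal structure forced by mutual singularity of the $\mu_j$) to exhibit $V$ as a measurable field of fibers containing every $\wi g_j(z)$. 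Both arguments use reductivity only to upgrade ``invariant'' to ``reducing''; yours trades Wermer's moment criterion for the standard direct-integral description of reducing subspaces, which is arguably more conceptual but leans on the commutant structure theorem, whereas the paper's is a one-line application of a quoted proposition.
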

Particularly, if the evolution operator belongs to the following class $\mathcal{A}$ of bounded self-adjoint operators:
\begin{eqnarray}
\mathcal{A}&=&\{A\in\mathcal{B}(\ell^2(\mathbb{N})): A=A^*, \nonumber\\
&&\text{and there exists a basis of $\ell^2(\mathbb{N})$ of eigenvectors of $A$}\},\label{equationA*}
\end{eqnarray}
then, for $A\in\mathcal{A}$, there exists a unitary operator $U$ such that $A=U^*DU$ with $D=\sum_{j}\lambda_j P_j$, where $\lambda_j$ are the spectrum of $A$ and $P_j$ is the orthogonal projection to the eigenspace $E_j$ of $D$  associated with the eigenvalue $\lambda_j$. Since  the operators in $\mathcal{A}$ are also normal and reductive,  the following corollary holds.
\begin{corollary}\label{corollary 3.2}
	Let $A\in\mathcal{A}$  with $A=U^*DU$, and let $\G$ be a countable set of vectors in $\ell^2(\mathbb{N})$.  Then, $\{A^tg\}_{g\in\G,t\in[0,L]}$ is complete in $\ell^2(\mathbb{N})$ if and only if $\{P_{j}(Ug)\}_{g\in\G}$ is complete in $E_j$.
\end{corollary}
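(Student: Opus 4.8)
The plan is to deduce the corollary directly from Theorem~\ref{theorem 3.1} by writing the abstract spectral decomposition of $A$ explicitly in terms of the eigenspaces $E_j$. First I would record that every $A\in\mathcal{A}$ is self-adjoint, hence normal, and reductive: if $V$ is an $A$-invariant subspace, $v\in V$ and $w\in V^{\perp}$, then $\la Aw,v\ra=\la w,Av\ra=0$ since $Av\in V$, so $Aw\in V^{\perp}$; thus $V^{\perp}$ is invariant and $V$ reduces $A$. Consequently the full equivalence in Theorem~\ref{theorem 3.1} is available for $A$, and the first direction (completeness of the system forces completeness of each fiber) holds in any case.

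Next I would make the data of Theorem~\ref{spectral theorem} concrete. Since $A=U^{*}DU$ with $D=\sum_{k}\lambda_{k}P_{k}$ and $\ell^{2}(\N)=\bigoplus_{k}E_{k}$, the scalar-valued spectral measure of $A$ is purely atomic with atoms exactly at the distinct eigenvalues $\lambda_{k}$, and the multiplicity function satisfies $m_{A}(\lambda_{k})=\dim E_{k}$. Grouping eigenvalues by multiplicity, for $j\in\mathbb{N}^{*}$ put $S_{j}=\{k:\dim E_{k}=j\}$ and $\mu_{j}=\sum_{k\in S_{j}}\delta_{\lambda_{k}}$; these measures are mutually singular and realize the decomposition \eqref{representation of normal}. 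Under the identification $\mathcal{W}=\bigoplus_{j\in\mathbb{N}^{*}}(L^{2}(\mu_{j}))^{(j)}\cong\bigoplus_{j}\bigoplus_{k\in S_{j}}E_{k}=\bigoplus_{k}E_{k}=\ell^{2}(\N)$, one may take the unitary furnished by Theorem~\ref{spectral theorem} to be (a block-diagonal re-identification of) $U$ itself, and then for $g\in\ell^{2}(\N)$ the component $\wi g_{j}$ evaluated at an atom $\lambda_{k}$ with $k\in S_{j}$ is precisely $P_{k}(Ug)$, viewed in $E_{k}\cong\ell^{2}(\Omega_{j})$.

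Finally I would translate the conclusion of Theorem~\ref{theorem 3.1}. Because each $\mu_{j}$ is atomic and every atom has positive mass, the requirement ``$\{\wi g_{j}(z)\}_{g\in\G}$ is complete in $\ell^{2}(\Omega_{j})$ for $\mu_{j}$-a.e.\ $z$'' is equivalent to ``$\{\wi g_{j}(\lambda_{k})\}_{g\in\G}$ is complete in $\ell^{2}(\Omega_{j})$ for every $k\in S_{j}$'', i.e.\ ``$\{P_{k}(Ug)\}_{g\in\G}$ is complete in $E_{k}$ for every $k$ with $\dim E_{k}=j$''. Letting $j$ range over $\mathbb{N}^{*}$, Theorem~\ref{theorem 3.1} yields that $\{A^{t}g\}_{g\in\G,t\in[0,L]}$ is complete in $\ell^{2}(\N)$ if and only if $\{P_{k}(Ug)\}_{g\in\G}$ is complete in $E_{k}$ for every $k$, which is the assertion.

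The step I expect to be the main obstacle is the middle one: one must verify that the spectral representation coming from $A=U^{*}DU$ and the one provided by Theorem~\ref{spectral theorem} differ only by a ``decomposable'' unitary (one acting as a unitary on each eigenspace $E_{k}$), so that it preserves completeness of the image families, and one must pair an eigenvalue of multiplicity $j$ with the summand $(L^{2}(\mu_{j}))^{(j)}$ rather than with $(L^{2}(\mu_{j'}))^{(j')}$ for $j'\ne j$. Once these identifications are pinned down the equivalence is immediate, and nothing beyond Theorem~\ref{theorem 3.1} and the structure theory recalled in Section~\ref{sec2} is needed.
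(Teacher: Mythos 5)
Your proof is correct and follows the same route as the paper: the paper deduces Corollary \ref{corollary 3.2} directly from Theorem \ref{theorem 3.1} by noting that operators in $\mathcal{A}$ are normal and reductive, treating the identification of the spectral data of $A=U^*DU$ with the eigenspaces $E_j$ as immediate. You supply exactly the details the paper leaves implicit (reductivity of self-adjoint operators, the purely atomic scalar spectral measure, and the multiplicity bookkeeping turning ``$\mu_j$-a.e.\ $z$'' into ``every eigenvalue''), and they all check out.
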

The proof of Theorem \ref{theorem 3.1} below, also shows that, for normal reductive operators, completeness in $\HH$ is equivalent to completeness of the system $\{N_{\mu_j}^t\tilde{g}_j\}_{g\in\G,t\in[0,L]}$ in $(L^2(\mu_j))^{(j)}$ for every $1\leq j\leq \infty$. In other words, the completeness of $\{A^tg\}_{g\in\G,t\in[0,L]}$ is equivalent to the completeness of its projections onto the mutually orthogonal subspaces $U^*P_{j}U\HH$ of $\HH$. The following Theorem \ref{theorem 3.4} summarizes the discussion above.
\begin{theorem}\label{theorem 3.4}
	Let $A\in\mathcal{B}(\HH)$ be a normal reductive operator on the Hilbert space $\HH$, and let $\G$ be a countable system of vectors in $\HH$. Then, $\{A^tg\}_{g\in\G,t\in[0,L]}$ is complete in $\HH$ if only if the system $\{N^t_{\mu_j}\tilde{g}_j\}_{g\in\G,t\in[0,L]}$ is complete in $(L^2(\mu_j))^{(j)}$ for every $j$, $1\leq j\leq\infty$.
\end{theorem}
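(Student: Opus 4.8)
\textbf{Proof plan for Theorem~\ref{theorem 3.4}.}
The plan is to reduce the statement to Theorem~\ref{theorem 3.1} together with the intermediate fact, already remarked immediately before the theorem, that completeness of $\{A^tg\}_{g\in\G,t\in[0,L]}$ in $\HH$ is equivalent to completeness of each projection onto the reducing subspaces $U^*P_jU\HH$. Concretely, pass through the unitary $U$ of the spectral theorem, so that $\HH$ is identified with $\mathcal{W}=\bigoplus_j (L^2(\mu_j))^{(j)}$, $A$ with $\bigoplus_j N_{\mu_j}^{(j)}$, and each $g$ with $\wi g=(\wi g_j)_j$. Under this identification $A^t$ becomes $\bigoplus_j N_{\mu_j}^{t\,(j)}$, so $\{A^tg\}_{g\in\G,t\in[0,L]}$ corresponds to the system $\{(N_{\mu_j}^{t}\wi g_j)_j : g\in\G,\,t\in[0,L]\}$ in $\mathcal{W}$.

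The first step is the ``easy'' implication: if each $\{N_{\mu_j}^t\wi g_j\}_{g\in\G,t\in[0,L]}$ is complete in $(L^2(\mu_j))^{(j)}$, then a vector orthogonal to the whole system in $\mathcal{W}$ has each $j$-component orthogonal to the corresponding system, hence is zero. For this I need the elementary observation that the subspaces $(L^2(\mu_j))^{(j)}$ are mutually orthogonal in $\mathcal{W}$, which is built into the direct-sum structure, and that $N_{\mu_j}^t$ maps $(L^2(\mu_j))^{(j)}$ into itself so the $j$-component of $A^tg$ is exactly $N_{\mu_j}^t\wi g_j$. Then orthogonality to $A^tg$ for all $g,t$ forces, via the direct-sum decomposition, orthogonality within each summand, and completeness there finishes it. This direction does not use reductivity.

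For the converse, suppose $\{A^tg\}_{g\in\G,t\in[0,L]}$ is complete in $\HH$. The key point is that, since $A$ is reductive, each $U^*P_jU\HH$ is a reducing subspace for $A$, hence invariant under $A^t$ for every $t\ge 0$ (because $A^t$ is a bound of a function of $A$ and commutes with the spectral projections $P_j$). Therefore the orthogonal projection $Q_j:=U^*P_jU$ commutes with $A^t$, and one shows that $Q_j$ applied to a complete system in $\HH$ yields a complete system in $Q_j\HH$: if $h\in Q_j\HH$ is orthogonal to $Q_j A^t g = A^t Q_j g$ for all $g,t$, then since $h\in Q_j\HH$ and $A^tg - Q_jA^tg \perp Q_j\HH$, the vector $h$ is in fact orthogonal to $A^tg$ itself for all $g,t$, hence $h=0$ by completeness in $\HH$. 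Translating back through $U$, this says $\{N_{\mu_j}^t\wi g_j\}_{g\in\G,t\in[0,L]}$ is complete in $(L^2(\mu_j))^{(j)}$ for each $j$. (Note that $Q_jg$ corresponds to $\wi g_j$ sitting inside $\mathcal{W}$, so $Q_j\G$ corresponds to $\{\wi g_j\}_{g\in\G}$.)

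The main obstacle, and the only place reductivity is genuinely needed, is precisely the claim that completeness in $\HH$ descends to completeness in each reducing summand; without reductivity an invariant subspace need not be reducing and the projection of a complete system need not be complete, which is why Theorem~\ref{theorem 3.1} only gives one inclusion in general. I would therefore state and use a small lemma: if $V$ is a reducing subspace for $A$ (hence for $A^t$, $t\in[0,L]$), and $\mathcal{C}\subseteq\HH$ is complete, then $P_V\mathcal{C}$ is complete in $V$ --- proved exactly by the orthogonality argument above, using $P_V A^t = A^t P_V$. Everything else is bookkeeping with the spectral-multiplicity representation already set up in Section~\ref{sec2}, and the identification of $A^t$ with $\bigoplus_j N_{\mu_j}^{t\,(j)}$, which follows directly from the definition of $A^t$ via the scalar spectral measure.
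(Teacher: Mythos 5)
Your division of the two implications into ``easy'' and ``hard'' is inverted, and the step you call easy contains the actual gap. If $\tilde f=(\tilde f_j)_j$ is orthogonal to $UA^tg=(N^t_{\mu_j}\tilde g_j)_j$ for all $g\in\G$ and $t\in[0,L]$, what you obtain is
\[
\langle \tilde f, UA^tg\rangle=\sum_{1\le j\le\infty}\langle \tilde f_j, N^t_{\mu_j}\tilde g_j\rangle=0,
\]
i.e.\ the vanishing of the \emph{sum} of the component inner products. This does not force each term to vanish: the vectors $UA^tg$ are not supported in a single summand, so the direct-sum structure alone gives nothing. (Your argument would be valid if the spanning family were $\bigcup_j\{(0,\dots,0,N^t_{\mu_j}\tilde g_j,0,\dots)\}$, but it is the ``diagonal'' family $\{(N^t_{\mu_j}\tilde g_j)_j\}$; already a single vector $(1,1)\in\CC\oplus\CC$ has complete projections onto each summand without being complete.) This implication --- componentwise completeness implies global completeness --- is exactly where reductivity is indispensable and where the paper invests its machinery: one extends $F(t)=\langle A^tg,f\rangle$ analytically from $[0,L]$ to the half-plane $\Re(t)>L/2$ (Lemma \ref{lemma 3.6}), deduces $F(n)=0$ for all integers $n\ge 0$, and then invokes Wermer's characterization of reductive normal operators (Proposition \ref{proposition reductive}) to conclude $\langle\tilde g_j(z),\tilde f_j(z)\rangle=0$ $\mu_j$-a.e.\ for every $j$; only then does $\tilde f_j\perp N^t_{\mu_j}\tilde g_j$ hold for each $j$ separately, so that the assumed completeness in each $(L^2(\mu_j))^{(j)}$ yields $\tilde f_j=0$.

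Conversely, the direction on which you spend the reductivity hypothesis is the genuinely trivial one. The subspaces $U^*P_jU\HH$ are reducing for \emph{every} normal $A$ --- this comes from the block-diagonal form in Theorem \ref{spectral theorem}, not from reductivity --- and, more to the point, the orthogonal projection of a complete set onto \emph{any} closed subspace $V$ is complete in $V$, since for $h\in V$ one has $\langle h,P_Vc\rangle=\langle h,c\rangle$; no commutation or invariance is needed. That half of Theorem \ref{theorem 3.4} therefore holds for all normal operators, consistent with the unconditional first half of Theorem \ref{theorem 3.1}. To repair the proposal you must replace your ``orthogonality within each summand'' step by the analytic-continuation-plus-Proposition~\ref{proposition reductive} argument from the paper's proof of Theorem \ref{theorem 3.1}.
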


\subsection{Proofs}
We begin this section by stating and proving a lemma  used to prove Theorem \ref{theorem 3.1} as well as other results in later sections.\\

Let $A$ be a normal operator, $L$ be a positive number, $f\in\HH$, $\tilde{f}=Uf=(\tilde f_j)$, and $\tilde{g}=Ug=(\tilde g_j)$ (as in the notation section).  Define $F(t)$ by   
\[F(t)=\langle A^tg,f\rangle=\int_{\mathbb{C}}z^t\langle \tilde{g}(z),\tilde{f}(z)\rangle d\mu(z).\]
%Then one has
Then, the following lemma holds.

\begin{lemma}\label{lemma 3.6}
	$F(t)$ is an analytic function of $t$ in the domain $\Omega=\{t:\Re(t)>L/2\}$, where $\Re(t)$ stands for the real part of $t$.
\end{lemma}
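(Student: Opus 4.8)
The plan is to show that $F(t)$ is holomorphic on the half-plane $\Omega = \{t : \Re(t) > L/2\}$ by exhibiting it as an absolutely convergent integral of functions that are entire in $t$ and then invoking a standard differentiation-under-the-integral (Morera/Fubini) argument. First I would write, for $\mu$-a.e.\ $z$, the integrand $h(z,t) = z^t \langle \tilde g(z), \tilde f(z)\rangle$ where $z^t = \exp(t(\ln|z| + i\arg z))$ with $\arg z \in [-\pi,\pi)$; for each fixed $z \ne 0$ the map $t \mapsto z^t$ is entire, and since $A \in \mathcal B(\HH)$ is bounded we have $|z| \le \|A\|$ for $\mu$-a.e.\ $z$, while the factor $\langle \tilde g(z), \tilde f(z)\rangle$ does not depend on $t$.

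The key estimate is the bound on $|h(z,t)|$ on $\Omega$. For $t$ with $\Re(t) = s > L/2$ we have $|z^t| = |z|^{s} e^{-\arg(z)\,\Im(t)} \le |z|^{s} e^{\pi |\Im(t)|}$. On a compact subset $K \subset \Omega$ the quantity $\Im(t)$ is bounded, so $e^{\pi|\Im(t)|} \le C_K$, and $s \ge s_0 > L/2$ for some $s_0$; hence $|z^t| \le C_K \max(|z|^{s_0}, \|A\|^{\sup_K \Re t})$, which is a bounded function of $z$ on the support of $\mu$ (recall $\mu$ is finite since $\mu = \sum \mu_j$ and $\sum_j \int \|\tilde g_j\|^2 d\mu_j = \|g\|^2 < \infty$, but more to the point the spectrum is bounded). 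Thus
\[
\int_{\CC} |h(z,t)|\, d\mu(z) \le C_K \int_{\CC} \big|\langle \tilde g(z), \tilde f(z)\rangle\big|\, d\mu(z) \le C_K \|g\|\,\|f\| < \infty
\]
uniformly for $t \in K$, using Cauchy–Schwarz in $\ell^2(\Om_j)$ pointwise and then in $L^2(\mu)$. This uniform integrability lets me conclude holomorphy: for any closed triangle (or rectangle) $\Delta \subset \Omega$, Fubini's theorem gives $\oint_{\partial \Delta} F(t)\, dt = \int_{\CC} \big(\oint_{\partial\Delta} z^t\, dt\big) \langle \tilde g(z), \tilde f(z)\rangle\, d\mu(z) = 0$ since each $t \mapsto z^t$ is entire; continuity of $F$ on $\Omega$ follows from dominated convergence with the same majorant. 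Morera's theorem then yields that $F$ is holomorphic on $\Omega$.

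The main subtlety — and the reason the statement restricts to $\Re(t) > L/2$ rather than all of $\CC$ — is the behaviour of $z^t$ near $z = 0$: when $0 \in \sigma(A)$, $|z^t| = |z|^{\Re t}$ blows up as $|z|\to 0$ if $\Re t < 0$, and even for $0 < \Re t \le L/2$ one cannot directly control $\int |z|^{\Re t}|\langle \tilde g,\tilde f\rangle|\,d\mu$ by $\|g\|\|f\|$ without an extra argument. The condition $\Re t > L/2$ is exactly what is needed downstream: $F$ restricted to the segment $[0,L]$ on the real axis is recovered, and the half-plane $\Re t > L/2$ is where a Cauchy-type / Carleson argument on the boundary segment can be run. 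So the one delicate point I would be careful about is verifying that the majorant $C_K\,|\langle\tilde g(z),\tilde f(z)\rangle|$ is genuinely $\mu$-integrable — which is immediate from $\langle \tilde g, \tilde f\rangle \in L^1(\mu)$ by Cauchy–Schwarz — and that $z^t$ is measurable in $z$ and entire in $t$, including a remark on the harmless discontinuity of $\arg$ across the negative real axis (a $\mu$-null set issue only if $\mu$ charges it, but in any case $z\mapsto z^t$ is still $\mu$-measurable for each fixed $t$ and the triangle integral vanishes pointwise a.e.). Everything else is routine.
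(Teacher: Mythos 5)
Your proof is correct and follows essentially the same route as the paper's: a pointwise bound of the form $|z^t|\le e^{(\pi+|\ln\|A\||)\,|t|}$ uniform on compact subsets of $\Omega$, dominated convergence for continuity, and Fubini plus the vanishing of $\oint z^t\,dt$ followed by Morera's theorem for holomorphy. (Your closing aside is slightly off --- the same majorant in fact works on any half-plane $\Re t>\varepsilon>0$, the choice $\Re t>L/2$ mattering only so that $\Omega$ meets $[0,L]$ in a set with a limit point for the later uniqueness argument --- but this does not affect the validity of your proof.)
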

\begin{proof}
	First, we aim to  
	prove that $F(t)$ is a continuous function in $\Omega$. Consider $t_0\in\Omega$. For $|z|\le M,$ where $M=\|A\|$, and for $t\in\Omega$ with $|t-t_0|<L/4$,
	one has
	\begin{eqnarray*}
		|z^t\langle\tilde{g}(z),\tilde{f}(z)\rangle|&=& |e^{t\ln(z)}||\langle \tilde{g}(z),\tilde{f}(z)\rangle|\\
		&\leq&e^{(|\ln (M)|+\pi)|t|}|\langle \tilde{g}(z),\tilde{f}(z)\rangle|\\
		&\leq&e^{(|\ln (M)|+\pi)(|t_0|+\frac L 4)}|\langle \tilde{g}(z),\tilde{f}(z)\rangle|.		\end{eqnarray*}

	Since the right hand side of the last inequality is an $L^2(\mu)$ function, we can use the dominated convergence theorem for $\Re(t)>L/2>0$, and get that for $t_0\in\Omega$,
	\begin{equation*}
	\lim_{t\rightarrow t_0}F(t)=\lim_{t\rightarrow t_0}\int_\CC z^t\langle \tilde{g}(z),\tilde{f}(z)\rangle d\mu(z)=\int_\CC\lim_{t\rightarrow t_0}z^t\langle\tilde{g}(z),\tilde{f}(z)\rangle d\mu(z)=F(t_0).
	\end{equation*}
	Therefore, $F(t)$ is a continuous function in $\Omega$. 
	
	Next we  show that for every closed piecewise $C^1$ curve $\gamma$ in $\Omega$,
	\[\oint_{\gamma}F(t)dt=0.\]
	
	For fixed $\gamma$,  there exists finite $M_1>0$ such that $L/2<|t|<M_1$.  Therefore, for $|z|\le M$, 
	\[|z^t\langle \tilde{g}(z),\tilde{f}(z)\rangle|\leq e^{\tilde{M}}|\langle \tilde{g}(z),\tilde{f}(z)\rangle|,\] with $\tilde{M}=M_1(|\ln M|+\pi)$.
	Then $$\oint_{\gamma}\int_\CC|z^t||\langle\tilde{g}(z),\tilde{f}(z)\rangle|d\mu(z)dt\leq e^{\tilde{M}}\|f\|_2\|g\|_2\cdot m_1(\gamma)<\infty, $$ where $m_1(\gamma)$ stands for the length of $\gamma.$
	
	By Fubini's theorem, 
	\begin{eqnarray*}
		\oint_{\gamma}\int_\CC z^t\langle\tilde{g}(z),\tilde{f}(z)\rangle d\mu(z)dt&=&\int_\CC\oint_{\gamma}z^t\langle\tilde{g}(z),\tilde{f}(z)\rangle dt d\mu(z)\\
		&=&\int_\CC\langle\tilde{g}(z),\tilde{f}(z)\rangle\oint_{\gamma}z^t dt d\mu(z)=0.
	\end{eqnarray*}
	where the last equality follows from the fact that for  $z\in\CC$, $h_z(t)=z^t$ is an analytic function of $t$ in $\Omega$  and hence $\oint_{\gamma}z^tdt=0$.  Then, by Morera's Theorem \cite[pp 208]{Rudinrc}, $F(t)$ is analytic on $\Omega$.
\end{proof}

\begin{proof}[\textbf{Proof of Theorem \ref{theorem 3.1}}]
	Since $\{A^tg\}_{g\in\G,t\in [0,L]}$ is complete in $\HH$,
	\[U\{A^tg:g\in\G,t\in [0,L]\}=\{(N^t_{\mu_j}\tilde{g}_j)_{j\in\mathbb{N}^*}:g\in\G,t\in [0,L]\}\] is complete in $\mathcal{W}=U\HH$. Hence, for every $1\leq j\leq\infty$, the system $\widetilde{\mathcal{S}}_j=\{N^t_{\mu_j}\tilde{g}_j\}_{g\in\G,t\in[0,L]}$ is complete in $(L^2(\mu_j))^{(j)}.$
	
	To finish the proof of the first statement of Theorem \ref{theorem 3.1} we use the following lemma, which is an adaptation of \cite[Lemma 1]{kriete} (\cite[Lemma 3.5]{ACCMP17}).
	
	\begin{lemma}\label{lemma 3.5}
		Let $\mathfrak{S}$ be a complete countable set of vectors in $(L^2(\mu_j))^{(j)}$, then for $\mu_j$-almost every $z$, $\{h(z):h\in\mathfrak{S}\}$ is complete in $\ell^2(\Omega_j)$.
	\end{lemma}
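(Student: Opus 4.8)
The plan is to argue by contradiction. If the conclusion fails, there is a measurable set $E$ with $\mu_j(E)>0$ on which the closed linear span of $\{h(z):h\in\mathfrak{S}\}$ is a proper subspace of $\ell^2(\Omega_j)$; I would then manufacture a single nonzero vector $f\in(L^2(\mu_j))^{(j)}$ orthogonal to every element of $\mathfrak{S}$, contradicting completeness. Throughout I regard each $h\in\mathfrak{S}$ as a $\mu_j$-measurable map $\CC\to\ell^2(\Omega_j)$ with $\int_\CC\|h(z)\|^2\,d\mu_j(z)<\infty$, enumerate $\mathfrak{S}=\{h_1,h_2,\dots\}$, fix the standard orthonormal basis $\{\delta_k\}_{k\in\Omega_j}$ of $\ell^2(\Omega_j)$, and for $\mu_j$-a.e.\ $z$ set $V(z)=\overline{\spn}\{h_n(z):n\in\N\}$, so that the goal becomes $V(z)=\ell^2(\Omega_j)$ a.e.

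The technical core is a \emph{measurable} selection of a unit vector in $V(z)^\perp$ whenever that space is nontrivial. First I would run the Gram--Schmidt process on $(h_n(z))_n$ pointwise in $z$: put $u_1(z)=h_1(z)$, $u_{n+1}(z)=h_{n+1}(z)-\sum_{m\le n,\,u_m(z)\ne 0}\frac{\la h_{n+1}(z),u_m(z)\ra}{\|u_m(z)\|^2}\,u_m(z)$, and $e_n(z)=u_n(z)/\|u_n(z)\|$ on $\{u_n(z)\ne 0\}$ and $e_n(z)=0$ elsewhere. Each $u_n$ and $e_n$ is $\mu_j$-measurable, since only inner products, norms, finite sums, and division by the measurable function $z\mapsto\|u_m(z)\|^2$ on the set where it is nonzero are involved; moreover the nonzero $e_n(z)$ form an orthonormal basis of $V(z)$, so $P_{V(z)}w=\sum_n\la w,e_n(z)\ra e_n(z)$ is the orthogonal projection onto $V(z)$ and $z\mapsto P_{V(z)}w$ is measurable for each fixed $w$ (a pointwise limit of measurable partial sums). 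Next, set $w_k(z)=\delta_k-P_{V(z)}\delta_k$ (measurable in $z$ for each $k$); then $V(z)^\perp\ne\{0\}$ exactly when $w_k(z)\ne 0$ for some $k$. Let $E=\{z:V(z)\ne\ell^2(\Omega_j)\}=\bigcup_k\{z:w_k(z)\ne 0\}$, and on $E$ define $k(z)=\min\{k:w_k(z)\ne 0\}$ and $f(z)=w_{k(z)}(z)/\|w_{k(z)}(z)\|$, with $f\equiv 0$ off $E$. Splitting $E$ into the measurable pieces $\{z\in E:k(z)=k\}$ shows $f$ is measurable, and by construction $\|f(z)\|\le 1$ with $f(z)\in V(z)^\perp$ for every $z\in E$.

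Finally, suppose $\mu_j(E)>0$; replacing $E$ by a subset of finite positive measure if necessary (the $\mu_j$ are $\sigma$-finite), $f$ is a nonzero element of $(L^2(\mu_j))^{(j)}$, since $\int_\CC\|f(z)\|^2\,d\mu_j(z)\le\mu_j(E)\in(0,\infty)$. For any $h\in\mathfrak{S}$ one has $h(z)\in V(z)$ for $\mu_j$-a.e.\ $z$, hence $\la f(z),h(z)\ra_{\ell^2(\Omega_j)}=0$ a.e., so $\la f,h\ra_{(L^2(\mu_j))^{(j)}}=\int_\CC\la f(z),h(z)\ra\,d\mu_j(z)=0$. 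Thus $f$ is orthogonal to the complete system $\mathfrak{S}$, forcing $f=0$, a contradiction. Therefore $\mu_j(E)=0$, i.e.\ $\{h(z):h\in\mathfrak{S}\}$ is complete in $\ell^2(\Omega_j)$ for $\mu_j$-a.e.\ $z$. The one genuinely delicate point is the measurability of the selection $z\mapsto f(z)$; the remainder is routine bookkeeping, and the argument follows the lines of \cite[Lemma 1]{kriete} and \cite[Lemma 3.5]{ACCMP17}.
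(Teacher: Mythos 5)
Your proof is correct. Note that the paper does not actually prove Lemma \ref{lemma 3.5}: it is invoked as an adaptation of \cite[Lemma 1]{kriete} (see also \cite[Lemma 3.5]{ACCMP17}) and used as a black box, so you are supplying an argument the paper delegates to its references. What you wrote is the standard measurable-selection proof underlying those results: pass to the contrapositive, build the measurable field of projections $P_{V(z)}$ by pointwise Gram--Schmidt on the countable family, detect the exceptional set $E$ through the vectors $w_k(z)=\delta_k-P_{V(z)}\delta_k$, and select a measurable unit vector in $V(z)^{\perp}$ via the minimal index $k(z)$; you correctly identify measurability of the selection as the only delicate point and handle it properly, and the final orthogonality computation against the complete system $\mathfrak{S}$ is routine. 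Two harmless remarks: since elements of $(L^2(\mu_j))^{(j)}$ are equivalence classes, one should fix representatives of the countably many $h_n$ at the outset so that all pointwise statements hold off a single $\mu_j$-null set; and on $E$ one has $\|f(z)\|=1$, not merely $\le 1$, which only strengthens the conclusion that $f\neq 0$ when $\mu_j(E)>0$.
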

	Since $\HH$ is separable, there exists a countable set  $T=\{t_i\}_{i=1}^{\infty}\subseteq [0,L]$ with $t_1=0$ such that $\overline{span}\{A^{t}g\}_{g\in\G,t\in T}=\overline{span}\{A^{t}g\}_{g\in\G,t\in [0,L]}$.  Hence, the fact that $\widetilde{\mathcal{S}}_j=\{N^t_{\mu_j}\tilde{g}_j\}_{g\in\G,t\in [0,L]}$ is  complete in $(L^2(\mu_j))^{(j)}$ (together with Lemma \ref{lemma 3.5}) implies that $\{z^{t}\tilde{g}_j(z)\}_{g\in\G,t\in T}$ is complete in $\ell^2(\Omega_j)$ for each $j\in\mathbb{N}^*$. %In addition, since $z^{t_1}=z^0=1 $, %	 
	Let $f\in\HH$ and $F(t)=\langle A^tg,f \rangle=0$ for all $g\in\G,t\in [0,L]$.	 Since $F(t)=0$ for all $t\in[0,L]$, and $F$ is analytic for $t \in \Omega=\{t:\Re(t)>L/2\}$, it follows that $F(t)=0,$ for all $t\in\Omega$ (see \cite[Theorem 10.18]{Rudinrc}). Thus, $F(n)=0$ for all $n\in\mathbb{N}$, i.e., $\text{for all } n\in\mathbb{N}$,
	\begin{equation}
	\int_\CC z^n\langle\tilde{g}(z),\tilde{f}(z)\rangle d\mu(z)=	\int_{\mathbb{C}}z^n\left[\sum_{1\leq j\leq\infty}\chi_{\mathcal{E}_j}(z)\langle\tilde{g}_j(z),\tilde{f}_j(z)\rangle_{\ell^2(\Omega_j)} \right]d\mu(z)=0.
	\end{equation} 
	To finish the proof, we need the following proposition from \cite{wermer}.
	\begin{proposition}\label{proposition reductive}
		Let $A$ be a normal operator on the Hilbert space $\HH$ and let $\mu_j$ be
		the measures in the representation \eqref{representation of normal} of $A$. Let $\mu$ be as in \eqref{scalar spectral measure}. Then, A is reductive if and only if, for any two vectors $f, g \in\HH$,
		$$\int_{\mathbb{C}}z^n\left[\sum_{1\leq j\leq\infty}\chi_{\mathcal{E}_j}(z)\langle\tilde{g}_j(z),\tilde{f}_j(z)\rangle_{\ell^2(\Omega_j)} \right]d\mu(z)=0$$
		for every $n\geq 0$ implies $\mu_j$-a.e. $\langle \tilde{g}_j(z),\tilde{f}_j(z)\rangle_{\ell^2(\Omega_j)}=0$ for every $j\in\mathbb{N}^*$. 
	\end{proposition}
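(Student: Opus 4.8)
The plan is to translate both conditions appearing in the statement into orthogonality relations with respect to two natural subspaces attached to $g$. For $g\in\HH$ set $\mathcal{C}_g:=\overline{\operatorname{span}}\{A^ng:n\geq 0\}$ (the cyclic invariant subspace generated by $g$) and $\mathcal{R}_g:=\overline{\operatorname{span}}\{A^n(A^*)^mg:n,m\geq 0\}$. Because $A$ is normal, $A^*A^n=A^nA^*$, so $\mathcal{R}_g$ is invariant under both $A$ and $A^*$, hence reducing; in fact it is the smallest reducing subspace of $A$ containing $g$, and $\mathcal{C}_g\subseteq\mathcal{R}_g$. The first translation is immediate from the formula relating the spectral measure of $A$ to its scalar-valued spectral measure: for $n\geq 0$,
\[
\int_{\CC}z^n\Bigl[\sum_{1\leq j\leq\infty}\chi_{\mathcal{E}_j}(z)\,\la\tilde{g}_j(z),\tilde{f}_j(z)\ra_{\ell^2(\Omega_j)}\Bigr]\,d\mu(z)=\la A^ng,f\ra,
\]
so the integral vanishes for every $n\geq 0$ if and only if $f\perp\mathcal{C}_g$.

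The second — and more substantial — translation is the claim that the pointwise condition ``$\la\tilde{g}_j(z),\tilde{f}_j(z)\ra_{\ell^2(\Omega_j)}=0$ for $\mu_j$-a.e.\ $z$ and every $j$'' is equivalent to $f\perp\mathcal{R}_g$. I would prove this by identifying $U\mathcal{R}_g$ explicitly inside $\mathcal{W}$: it is the closure of the span of the functions $z\mapsto z^n\bar{z}^m\,\tilde{g}(z)$, whose $j$-th component is $z\mapsto z^n\bar{z}^m\,\tilde{g}_j(z)$. The key analytic input is the complex Stone--Weierstrass theorem: polynomials in $z$ and $\bar{z}$ are dense in $C(\sigma(A))$, hence dense in $L^2(\nu)$ for the finite measure $d\nu:=\norm{\tilde{g}(z)}^2\,d\mu(z)$. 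Using this one shows that $U\mathcal{R}_g$ is exactly the ``pointwise span bundle'' $\{(h_j)_j\in\mathcal{W}:\ h_j(z)\in\CC\,\tilde{g}_j(z)\text{ for }\mu_j\text{-a.e. }z,\ \text{every }j\}$; passing to orthogonal complements summand by summand then yields $f\perp\mathcal{R}_g$ if and only if $\la\tilde{g}_j(z),\tilde{f}_j(z)\ra_{\ell^2(\Omega_j)}=0$ for $\mu_j$-a.e.\ $z$ and every $j$. I expect this to be the main obstacle: it is where the real content sits, and it requires care with the measure-theoretic bookkeeping — measurability of the field of subspaces $z\mapsto\CC\,\tilde{g}_j(z)$, the $L^2(\nu)$-approximation estimate, and the justification that one may pass to complements componentwise (including the summand $j=\infty$, where $\ell^2(\Omega_\infty)$ is infinite dimensional).

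With both translations in hand the proposition follows from a short lattice argument. For a fixed $g$, the stated implication holding for all $f$ means precisely $\mathcal{C}_g^{\perp}\subseteq\mathcal{R}_g^{\perp}$, i.e.\ $\mathcal{R}_g\subseteq\mathcal{C}_g$; together with $\mathcal{C}_g\subseteq\mathcal{R}_g$ this says $\mathcal{C}_g=\mathcal{R}_g$, i.e.\ $\mathcal{C}_g$ is reducing. Hence the hypothesis of the proposition is equivalent to the assertion that every cyclic invariant subspace of $A$ is reducing. If $A$ is reductive this is automatic. Conversely, assume every $\mathcal{C}_g$ is reducing and let $M$ be any closed invariant subspace; for $g\in M$ we have $\mathcal{C}_g\subseteq M$, and since $\mathcal{C}_g=\mathcal{R}_g$ is $A^*$-invariant and contains $g$, we get $A^*g\in\mathcal{C}_g\subseteq M$. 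Thus $A^*M\subseteq M$, whence $AM^{\perp}\subseteq M^{\perp}$ (from $\la Ax,y\ra=\la x,A^*y\ra=0$ for $x\in M^{\perp}$, $y\in M$), so $M$ is reducing and $A$ is reductive.
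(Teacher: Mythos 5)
Your proof is correct. Note that the paper itself gives no proof of this proposition: it is quoted verbatim from Wermer's work on invariant subspaces of normal operators, so there is no internal argument to compare against; your write-up is a sound, self-contained reconstruction of essentially the standard argument behind that citation. The skeleton is right: the moment condition is exactly $\la A^n g,f\ra=0$ for all $n\geq 0$, i.e.\ $f\perp \mathcal{C}_g$ with $\mathcal{C}_g$ the closed cyclic invariant subspace of $g$; the pointwise condition is exactly $f\perp\mathcal{R}_g$ with $\mathcal{R}_g$ the smallest reducing subspace containing $g$; the universal quantification over $f$ turns the implication into $\mathcal{R}_g\subseteq\mathcal{C}_g$, hence $\mathcal{C}_g=\mathcal{R}_g$; and your closing lattice argument correctly shows that ``every cyclic invariant subspace is reducing'' is equivalent to reductivity (the step $A^*g\in\mathcal{C}_g\subseteq M$ is the right trick). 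One remark on the step you flag as the main obstacle: you do not actually need the full identification of $U\mathcal{R}_g$ as the pointwise span bundle, with its attendant measurability bookkeeping. Setting $h(z)=\sum_{j}\chi_{\mathcal{E}_j}(z)\la\tilde{g}_j(z),\tilde{f}_j(z)\ra_{\ell^2(\Omega_j)}$, which lies in $L^1(\mu)$ by Cauchy--Schwarz, the condition $f\perp\mathcal{R}_g$ reads $\int z^n\bar{z}^m\,h\,d\mu=0$ for all $n,m\geq 0$; by Stone--Weierstrass the polynomials in $z,\bar{z}$ are uniformly dense in $C(\sigma(A))$, so the finite complex measure $h\,d\mu$ annihilates all continuous functions and therefore vanishes, giving $h=0$ $\mu$-a.e.\ directly. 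This shortcut delivers exactly the equivalence you need and sidesteps the measurable-field issues (including the $j=\infty$ summand) entirely.
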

	Since $A$ is reductive,  it follows from Proposition \ref{proposition reductive} that $\langle \tilde{g}_j(z),\tilde{f}_j(z)\rangle_{\ell^2(\Omega_j)}=0$ for every $j\in\mathbb{N}^*$.
	Finally, since $\{\tilde{g}_j(z)\}_{g\in\G}$ is complete in $\ell^2(\Omega_j)$ for $\mu_j$-a.e. $z$,  we get that  $\tilde{f}_j(z)=0, \mu_j\text{-a.e. z} \text{ for every } j\in\mathbb{N}^*$. Thus, $\tilde{f}=0$ $\mu$-a.e. $z$, and hence  $f=0$. Therefore,    $\{A^tg\}_{g\in\G,t\in[0,L]}$ is complete in $\HH$.  
\end{proof}

\section{Bessel system}\label{bessel section}
The goal of this section is to study the conditions for which the system $\{A^tg\}_{g\in\G,t\in[0,L]}$ is  Bessel in $\HH$. There are two main theorems that correspond to the finite dimensional case and the infinite dimensional case, respectively.  The proofs  of the results are relegated to the last subsection. We begin with the following proposition  which is valid for both finite and infinite dimensional spaces.

\begin{proposition}\label{BesselEqu}
	Let $A \in \mathcal B(\HH)$ be normal, $\G\subset \HH$ be a countable set of vectors, and let $L$ be a positive finite number. If $\G$ is a Bessel system in $\HH$, then $\{A^tg\}_{g\in\G,t\in[0,L]}$ is a Bessel system in $\HH$.
\end{proposition}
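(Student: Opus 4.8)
The plan is to dominate, for each fixed $t$, the inner sum $\sum_{g\in\G}\abs{\la f,A^tg\ra}^2$ by the ordinary Bessel bound of $\G$ applied to the single vector $(A^*)^tf$, and then to integrate the resulting estimate over $t\in[0,L]$, using the uniform operator bound $\sup_{t\in[0,L]}\norm{A^t}<\infty$. Throughout, let $C_\G$ be a Bessel bound for $\G$.

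First I would record the uniform bound $M_L:=\sup_{t\in[0,L]}\norm{A^t}\le\max\{1,\norm{A}^{L}\}<\infty$. Since $(A^t)^*=(A^*)^t$ and $\norm{(A^t)^*}=\norm{A^t}$, it suffices to bound $\norm{(A^*)^tf}$. From the defining integral formula for $A^t$, the identity $\abs{\bar z^{t}}=\abs{z}^t$, and the Cauchy--Schwarz inequality, for $f,h\in\HH$ and $t\in[0,L]$ one gets
\[
\abs{\la (A^*)^tf,h\ra}=\left|\int_{\sigma(A)}\bar z^{t}\,\la\wi f(z),\wi h(z)\ra\,d\mu(z)\right|\le\Big(\esssup_{z}\;\abs{z}^{t}\Big)\,\norm{f}\,\norm{h}\le\max\{1,\norm{A}^{L}\}\,\norm{f}\,\norm{h},
\]
since $\abs{z}\le\norm{A}$ $\mu$-a.e.\ forces $\abs{z}^t\le1$ when $\abs{z}\le1$ and $\abs{z}^t\le\norm{A}^t\le\norm{A}^L$ when $\abs{z}\ge1$. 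Hence $\norm{(A^*)^tf}\le\max\{1,\norm{A}^{L}\}\,\norm{f}$ and $M_L\le\max\{1,\norm{A}^{L}\}$.

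Next I would carry out the main estimate. Fix $f\in\HH$. For each $t\in[0,L]$, the identity $\la f,A^tg\ra=\overline{\la A^tg,f\ra}=\overline{\la g,(A^*)^tf\ra}=\la (A^*)^tf,g\ra$ together with the Bessel property of $\G$ applied to $(A^*)^tf\in\HH$ yields
\[
\sum_{g\in\G}\abs{\la f,A^tg\ra}^2=\sum_{g\in\G}\abs{\la (A^*)^tf,g\ra}^2\le C_\G\,\norm{(A^*)^tf}^2\le C_\G\,M_L^2\,\norm{f}^2 .
\]
The map $t\mapsto\la f,A^tg\ra$ is continuous on $(0,L]$ (by the dominated-convergence argument in the proof of Lemma \ref{lemma 3.6}) and bounded on $[0,L]$, hence Lebesgue measurable on $[0,L]$; thus $t\mapsto\sum_{g\in\G}\abs{\la f,A^tg\ra}^2$ is nonnegative and measurable, and Tonelli's theorem permits interchanging $\sum_{g\in\G}$ with $\int_0^L\,dt$. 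Integrating the previous display over $[0,L]$ therefore gives
\[
\sum_{g\in\G}\int_0^L\abs{\la f,A^tg\ra}^2\,dt=\int_0^L\sum_{g\in\G}\abs{\la f,A^tg\ra}^2\,dt\le C_\G\,M_L^2\,L\,\norm{f}^2 ,
\]
which is precisely the upper bound in \eqref{SCF} with Bessel constant $C=C_\G\,L\,\max\{1,\norm{A}^{2L}\}$; hence $\{A^tg\}_{g\in\G,t\in[0,L]}$ is a Bessel system in $\HH$.

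I do not anticipate a real obstacle here: the whole argument hinges on the single choice of summing over $g$ first---so that the discrete Bessel bound of $\G$ applies pointwise in $t$ to the vector $(A^*)^tf$---and integrating in $t$ only afterward. The sole points calling for a word of justification are the (routine) measurability of $t\mapsto\la f,A^tg\ra$, needed both for \eqref{SCF} to be meaningful and to invoke Tonelli, and the elementary uniform bound $\abs{z}^t\le\max\{1,\norm{A}^{L}\}$ on $\sigma(A)\times[0,L]$.
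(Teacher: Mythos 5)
Your argument is correct and is essentially the paper's own proof: swap the sum over $\G$ with the integral over $t$, apply the Bessel bound of $\G$ to $(A^*)^tf$, and control $\norm{(A^*)^tf}$ by $\norm{A}^t\norm{f}$ before integrating. The only cosmetic differences are that you replace $\norm{A}^{2t}$ by the uniform bound $\max\{1,\norm{A}^{2L}\}$ rather than integrating it exactly (yielding a slightly larger but equally valid constant), and that you spell out the measurability/Tonelli justification that the paper leaves implicit.
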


The fact  that $\G$ is a Bessel system in $\HH$ implies that $\{A^tg\}_{g\in\G,t\in[0,L]}$ is Bessel in $\HH$ is not too surprising. However, the converse implication is not obvious. The next result characterizes the finite dimensional case.

\begin{theorem}[Besselness in  finite dimensional space]\label{FDBessel}
	Let	$A$ be  a normal operator on $\CC^d$ and $L$ be a positive finite number.  Let $M=Range(A^*)$ and $P_M\G=\{P_Mg\}_{g \in \G}$, where $P_M$ is the orthogonal projection on $M$. Then, $\{A^tg\}_{g\in\G,t\in[0,L]}$ is a Bessel system in $\CC^d$ if and only if $P_M\G$ is a Bessel system in $M$.
\end{theorem}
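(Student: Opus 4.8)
The plan is to diagonalize $A$ and convert the integral over $t$ into a finite-dimensional quadratic form. Since $A$ is normal on $\CC^d$, the spectral theorem gives $A=\sum_{k=1}^{N}\lambda_k P_k$ with distinct eigenvalues $\lambda_k$ and mutually orthogonal projections $P_k$ with $\sum_k P_k=I$; reorder so that $\lambda_k\neq 0$ precisely for $1\le k\le N_0$. Then $M=\mathrm{Range}(A^*)=\mathrm{Range}(A)=\bigoplus_{k\le N_0}\mathrm{Range}(P_k)$ and $P_M=\sum_{k\le N_0}P_k$. From the definition of $A^t$ one gets $A^tg=\sum_{k\le N_0}\lambda_k^{t}P_kg$ for $t>0$ (the zero eigenvalue contributes nothing, since $0^t=0$), hence $A^tg=A^t(P_Mg)$, and for every $f\in\CC^d$
\[
\la f,A^tg\ra=\sum_{k=1}^{N_0}e^{t\bar\mu_k}\la P_kf,P_kg\ra,\qquad \mu_k:=\ln|\lambda_k|+i\arg(\lambda_k),
\]
where the exponents $\bar\mu_1,\dots,\bar\mu_{N_0}$ are pairwise distinct because $\lambda\mapsto\ln|\lambda|+i\arg(\lambda)$ is injective on $\CC\setminus\{0\}$.

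The crux is to show that there are constants $0<c_A\le C_A$ depending only on $A$ and $L$ (and not on $f,g$) such that
\[
c_A\sum_{k=1}^{N_0}|\la P_kf,P_kg\ra|^2\le\int_0^L|\la f,A^tg\ra|^2\,dt\le C_A\sum_{k=1}^{N_0}|\la P_kf,P_kg\ra|^2 .
\]
Writing $c_k:=\la P_kf,P_kg\ra$, expanding the square and integrating term by term gives $\int_0^L|\la f,A^tg\ra|^2\,dt=\sum_{j,k}G_{jk}c_j\overline{c_k}$, where $G_{jk}=\int_0^Le^{t(\bar\mu_j+\mu_k)}\,dt$ is, up to transposition, the Gram matrix in $L^2[0,L]$ of the functions $t\mapsto e^{t\bar\mu_k}$; it is Hermitian and positive semidefinite, and in fact positive definite because finitely many exponentials with distinct exponents are linearly independent on $[0,L]$. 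Taking $c_A,C_A$ to be the smallest and largest eigenvalues of $G$ yields the displayed equivalence, and summing over $g\in\G$ shows that the Bessel sum $\sum_{g\in\G}\int_0^L|\la f,A^tg\ra|^2\,dt$ is comparable, uniformly in $f$, to $\sum_{g\in\G}\sum_{k=1}^{N_0}|\la P_kf,P_kg\ra|^2$.

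It remains to compare this double sum with the Bessel sum of $P_M\G$ in $M$. For the ``only if'' direction, note that for $h\in M$ one has $\la h,P_Mg\ra=\sum_{k\le N_0}\la P_kh,P_kg\ra$, so Cauchy--Schwarz over the $N_0$ summands gives $|\la h,P_Mg\ra|^2\le N_0\sum_{k\le N_0}|\la P_kh,P_kg\ra|^2$; applying this with $h=P_Mf$ and using the lower estimate above produces a Bessel bound for $P_M\G$. For the ``if'' direction it is quickest to invoke Proposition \ref{BesselEqu}: if $P_M\G$ is Bessel in $M$ it is Bessel in $\CC^d$ as well (since $\|P_Mf\|\le\|f\|$), so $\{A^t(P_Mg)\}_{g\in\G,t\in[0,L]}=\{A^tg\}_{g\in\G,t\in[0,L]}$ is Bessel in $\CC^d$ by Proposition \ref{BesselEqu} applied to the family $P_M\G$; alternatively, use $\la P_kf,P_kg\ra=\la P_kf,P_Mg\ra$ for $k\le N_0$, the Bessel bound for $P_M\G$ applied to each $h=P_kf$, summation over $k$, and the upper estimate above. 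The only genuinely non-routine ingredient is the positive-definiteness of the exponential Gram matrix $G$ with $f,g$-independent constants; everything else is bookkeeping with orthogonal projections and Cauchy--Schwarz, so I expect no obstacle beyond making that step precise.
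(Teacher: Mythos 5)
Your proof is correct, but it takes a genuinely different route from the paper's. The paper never forms the Gram matrix of exponentials: for the ``only if'' direction it simply tests the Bessel inequality on a unit eigenvector $u_i$ for each nonzero eigenvalue, obtaining $\bigl(\int_0^L|\lambda_i|^{2t}dt\bigr)\sum_{g\in\G}\|P_ig\|^2\le C$, sums over the finitely many such $i$ to get $\sum_{g\in\G}\|P_Mg\|^2<\infty$, and concludes via the elementary finite-dimensional characterization of Bessel systems as those with square-summable norms (Lemma \ref{FDIG}); the ``if'' direction is the same crude $\|A\|^{2t}$-plus-Cauchy--Schwarz estimate as your fallback via Proposition \ref{BesselEqu}. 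Your argument instead establishes the stronger two-sided, $f,g$-uniform comparison $\int_0^L|\la f,A^tg\ra|^2\,dt\asymp\sum_k|\la P_kf,P_kg\ra|^2$, with constants the extreme eigenvalues of the Gram matrix of $\{e^{t\bar\mu_k}\}$ in $L^2[0,L]$; the positive definiteness is correctly reduced to linear independence of exponentials with distinct exponents, and the distinctness of the $\bar\mu_k$ to injectivity of $\lambda\mapsto\ln|\lambda|+i\arg\lambda$ on $\CC\setminus\{0\}$. What your route buys is a quantitative equivalence between the semi-continuous system and the fully discrete system $\{P_kg\}_{g\in\G,\,k\le N_0}$ (very much in the spirit of the discretization results of Section 5, and it would transfer frame bounds, not just Besselness); what the paper's route buys is elementarity, with all the finite-dimensionality isolated in Lemma \ref{FDIG}. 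The one point you should make explicit is that the identity $\la f,A^tg\ra=\sum_{k\le N_0}e^{t\bar\mu_k}\la P_kf,P_kg\ra$ holds only for $t>0$ when $0\in\sigma(A)$, which is harmless for the integral since $\{0\}$ is Lebesgue-null in $[0,L]$.
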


Under the appropriate restrictions on the spectrum $\sigma (A)$  of $A$, one can obtain a result similar to Theorem \ref {FDBessel} for the infinite dimensional case. However, if $0 \notin \sigma (A)$, the main result for the infinite dimensional Hilbert space is stated in the following theorem. 
\begin{theorem} [Besselness in infinite dimensions]  \label {MainBessel ID}
	Let $A\in\mathcal{B}(\HH)$ be an invertible normal  operator, and  let $\G$ be a countable system of vectors in $\HH$. Then, for any finite positive number  $L$, $\{A^tg\}_{g\in\G,t\in[0,L]}$ is a Bessel system in $\HH$
	if and only if $\G$ is a Bessel system  in $\HH$.
\end{theorem}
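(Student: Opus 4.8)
The ``$\Rightarrow$'' implication (if $\G$ is Bessel then $\{A^tg\}_{g\in\G,t\in[0,L]}$ is Bessel) is exactly Proposition~\ref{BesselEqu}, so only the converse requires work. The plan is to reduce the continuous--time Bessel estimate back to the single time $t=0$ by inverting a suitable ``averaging in time'' operator. So assume $\{A^tg\}_{g\in\G,t\in[0,L]}$ is Bessel with bound $C$, i.e.
\[
\sum_{g\in\G}\int_0^L|\langle h,A^tg\rangle|^2\,dt\le C\|h\|^2\qquad(h\in\HH),
\]
and look for a scalar function $k\in L^2[0,L]$ for which the operator $P:=\int_0^L k(t)A^t\,dt$ is bounded and invertible on $\HH$.

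Granting such a $k$ and $P$, the conclusion is immediate. For each $g\in\G$ we have $g=P^{-1}(Pg)=P^{-1}\int_0^L k(t)A^tg\,dt$, hence for every $f\in\HH$
\[
\langle f,g\rangle=\big\langle (P^*)^{-1}f,\ {\textstyle\int_0^L}k(t)A^tg\,dt\big\rangle=\int_0^L\overline{k(t)}\,\langle (P^*)^{-1}f,A^tg\rangle\,dt .
\]
The Cauchy--Schwarz inequality in $L^2[0,L]$ gives $|\langle f,g\rangle|^2\le\|k\|_{L^2[0,L]}^2\int_0^L|\langle (P^*)^{-1}f,A^tg\rangle|^2\,dt$, and summing over $g\in\G$ and applying the Bessel hypothesis to $(P^*)^{-1}f$ yields
\[
\sum_{g\in\G}|\langle f,g\rangle|^2\le C\,\|k\|_{L^2[0,L]}^2\,\|P^{-1}\|^2\,\|f\|^2,
\]
so $\G$ is a Bessel system in $\HH$.

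It remains to build $k$, and this is where invertibility of $A$ is used decisively. Since $A$ is invertible and normal, $\sigma(A)$ is a compact subset of the annulus $\{m\le|z|\le M\}$ with $m=\|A^{-1}\|^{-1}>0$ and $M=\|A\|$. By the Borel functional calculus, $P=\int_0^L k(t)A^t\,dt=m_k(A)$ with $m_k(z):=\int_0^L k(t)z^t\,dt$ (the interchange of integrals is justified by Fubini, since $|z^t|$ is bounded on $[0,L]\times\sigma(A)$ and $\langle\tilde f_1,\tilde f_2\rangle\in L^1(\mu)$), and $P$ is then invertible with $\|P^{-1}\|\le\big(\inf_{z\in\sigma(A)}|m_k(z)|\big)^{-1}$ as soon as the right-hand side is finite. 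So it suffices to exhibit $k\in L^2[0,L]$ with $\inf_{z\in\sigma(A)}|m_k(z)|>0$. I would take a fixed nonnegative $\rho\in L^2[-1,1]$ with $\int_{-1}^1\rho=1$, and set $k(t)=\varepsilon^{-1}\rho\big((t-\tfrac L2)/\varepsilon\big)$ with $0<\varepsilon\le L/2$. A change of variables gives $m_k(z)=z^{L/2}\int_{-1}^{1}\rho(v)\,e^{\varepsilon v\log z}\,dv$, and since $|\log z|\le\big((\max\{|\ln m|,|\ln M|\})^2+\pi^2\big)^{1/2}=:R_0$ for all $z\in\sigma(A)$, choosing $\varepsilon$ small enough that $e^{\varepsilon R_0}-1<\tfrac12$ forces $\big|\int_{-1}^{1}\rho(v)e^{\varepsilon v\log z}\,dv-1\big|<\tfrac12$ uniformly in $z\in\sigma(A)$; hence $|m_k(z)|\ge\tfrac12|z|^{L/2}\ge\tfrac12 m^{L/2}>0$ on $\sigma(A)$, as required.

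The one genuinely delicate point is precisely this construction of $k$, and it is exactly where $0\notin\sigma(A)$ cannot be dropped (consistent with the observation, recorded before the statement, that non-singularity of $A$ is necessary for the equivalence): since $m_k(z)$ is analytic in $z$ and $m_k(z)\to0$ as $z\to0$, any such multiplier must vanish somewhere near the origin, so no choice of $k$ can make $P$ invertible once $\sigma(A)$ reaches $0$. When $0\notin\sigma(A)$, the compactness of $\sigma(A)$ together with its separation from the origin is what allows the ``concentrate $k$ near $t=L/2$'' trick to pin $|m_k|$ uniformly away from zero on the spectrum; the remaining verifications (norm--continuity of $t\mapsto A^t$ on $[0,L]$, the Fubini step, and the identification $P=m_k(A)$) are routine.
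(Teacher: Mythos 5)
Your proof is correct, and at the strategic level it coincides with the paper's: both handle the converse by inverting a time-averaged operator $\int k(t)A^t\,dt$, which in the spectral representation is multiplication by $m_k(z)=\int k(t)z^t\,dt$, and both then transfer the Bessel bound via Cauchy--Schwarz in $t$ (the paper packages this transfer as H\"older plus Lemma~\ref{BT} applied to $T(U\G)$; you do it directly through $g=P^{-1}Pg$, which is the same computation). The genuine difference is the choice of kernel, and hence how one bounds $|m_k|$ away from zero on $\sigma(A)$. The paper takes $k=\chi_{[0,\ell]}$ with $\ell=\min\{L,1/2\}$, so the multiplier is $\phi(z)=(z^{\ell}-1)/\ln z$, and Claim~\ref{lemma 4.11} must exclude zeros of $z^{\ell}-1$ by a case analysis on $\arg z$ (the truncation to $\ell\le 1/2$ exists precisely to keep $\ell\arg z$ inside $(-\pi,\pi)$). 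You instead concentrate $k$ near $t=L/2$, making $m_k(z)$ a small perturbation of $z^{L/2}$, whose modulus is trivially at least $\|A^{-1}\|^{-L/2}$; the only estimate needed is $|e^{\varepsilon v\log z}-1|\le e^{\varepsilon R_0}-1$, valid uniformly because invertibility bounds $|\log z|$ on $\sigma(A)$. Your version buys a cleaner lower bound with no case analysis and no artificial restriction of the time window; the paper's version buys an explicit, $\varepsilon$-free operator $T$ with bounds computable from $\|A\|$ and $\|A^{-1}\|$ alone. Both uses of invertibility are the same in substance ($\sigma(A)$ lies in a compact annulus avoiding $0$), and the steps you defer as routine (boundedness of $P$, the Fubini identification $P=m_k(A)$, well-definedness of the vector-valued integral) are indeed routine for an invertible normal $A$.
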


The condition that $A$ is invertible is necessary in Theorem \ref{MainBessel ID} as can be shown by the following example.

\begin{example}\label{example1}
	Let $\G=\{ne_n\}_{n=1}^{\infty}$ with $\{e_n\}_{n=1}^{\infty}$ being the standard basis of $\ell^2(\mathbb{N})$, $f\in\ell^2(\mathbb{N})$ with $f(n)=1/n$, and let $D$ be the diagonal infinite matrix with diagonal entries $D_{n,n}=e^{-{n^2}}$.  The operator $D$ is injective but  not an invertible operator on  $\ell^2(\mathbb{N})$. 
	
	Note that $$\sum_{g\in \G}|\langle f,g\rangle|^2=\infty.$$ Hence, $\G$ is not a Bessel system in $\ell^2(\mathbb{N})$.
	On the other hand,
	\begin{equation}
	\sum_{g\in\mathcal{G}}\int_{0}^{1}|\langle f,D^tg\rangle|dt= \sum_{n=1}^{\infty}\frac{1-e^{-2n^2}}{2}|f_n|^2\leq\|f\|^2/2.
	\end{equation} Thus $\{D^{t}g\}_{g\in\mathcal{G},t\in[0,1]}$ is  Bessel in $\ell^2(\mathbb{N})$.
\end{example}
\subsection {Proofs for Section \ref {bessel section}}
\begin{proof}[\textbf{Proof of Proposition \ref{BesselEqu}}]
	For all $ f\in\HH$,
	\begin{eqnarray}
	\sum\limits_{g\in\G}\int\limits_{0}^{L}|\la  f,A^tg\ra|^2 dt&=&	\sum\limits_{g\in\G}\int\limits_{0}^{L}|\la  A^{*t}f,g\ra|^2 dt\nonumber\\
	&=&\int\limits_{0}^{L}\sum\limits_{g\in\G}|\la A^{*t} f,g\ra|^2 dt
	\leq\int\limits_{0}^{L}C_\G\|A^{*t}f\|^2dt\nonumber\\ &\leq&\int\limits_{0}^{L}C_\G\|A\|^{2t}\|f\|^2dt=\begin{cases}
	C_\G\frac{\|A\|^{2L}-1}{\ln\|A\|^2}\|f\|^2, \|A\|\neq 1\\
	C_{\G}L\|f\|^2,\|A\|=1,\\
	\end{cases}\nonumber
	\end{eqnarray}
	where $C_{\G}$ is a Bessel constant of the Bessel system $\G$. 
	Therefore, $\{A^tg\}_{g\in\G,t\in[0,L]}$ is Bessel in $\HH$.
\end{proof}

In order to prove Theorem \ref {FDBessel}, we need the following lemma:
\begin{lemma} \label{FDIG}    
	Let $\G=\{g_{j}\}_{j\in J}\subset \CC^d$ where $J$ is a countable set. Then, $\G$  is a Bessel system if and only if $\sum_{j\in J}\|g_j\|^2<\infty$. 
\end{lemma}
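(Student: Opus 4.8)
The plan is to prove both directions of the equivalence, the easy one being that a Bessel system in $\CC^d$ must be square-summable, and the harder one (really the crux) that square-summability is also sufficient. For the forward direction, suppose $\G=\{g_j\}_{j\in J}$ is Bessel with bound $C$. Testing the Bessel inequality $\sum_{j\in J}|\la f,g_j\ra|^2\le C\|f\|^2$ against $f=g_k/\|g_k\|$ (for those $g_k\ne 0$) gives $\|g_k\|^2\le\sum_{j\in J}|\la g_k/\|g_k\|,g_j\ra|^2\le C$, so each vector has norm at most $\sqrt C$; but that alone does not give summability. Instead I would exploit finite dimensionality directly: pick an orthonormal basis $e_1,\dots,e_d$ of $\CC^d$ and write $\|g_j\|^2=\sum_{i=1}^d|\la g_j,e_i\ra|^2$. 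Summing over $j$ and swapping the (everywhere nonnegative) sums, $\sum_{j\in J}\|g_j\|^2=\sum_{i=1}^d\sum_{j\in J}|\la e_i,g_j\ra|^2\le\sum_{i=1}^d C\|e_i\|^2=dC<\infty$, which is exactly square-summability.

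For the converse, assume $\sum_{j\in J}\|g_j\|^2=:S<\infty$. Then for any $f\in\CC^d$, Cauchy–Schwarz gives $|\la f,g_j\ra|^2\le\|f\|^2\|g_j\|^2$, and summing over $j\in J$ yields
\begin{equation*}
\sum_{j\in J}|\la f,g_j\ra|^2\le\|f\|^2\sum_{j\in J}\|g_j\|^2=S\|f\|^2,
\end{equation*}
so $\G$ is Bessel with constant $S$. Note this direction does not use finite dimensionality at all; it is finite dimensionality that makes the forward implication work, since in an infinite dimensional space a Bessel system (e.g. an orthonormal basis) need not be square-summable.

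There is no genuine obstacle here — both steps are short. The only point requiring a little care is the interchange of the double sum $\sum_i\sum_j$ in the forward direction, which is justified because every term $|\la e_i,g_j\ra|^2$ is nonnegative (Tonelli for counting measures), and the observation that the bound $dC$ depends on the ambient dimension $d$, which is precisely why the statement is special to $\CC^d$. I would present the two inequalities in the order above, remarking after the proof that the converse inequality is dimension-free while the forward one is not, since that dichotomy is exactly what Theorem \ref{MainBessel ID} and Example \ref{example1} are probing in the infinite dimensional setting.
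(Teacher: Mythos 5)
Your proof is correct and follows essentially the same route as the paper's: the forward direction expands $\|g_j\|^2$ over an orthonormal basis and sums the Bessel inequality over the $d$ basis vectors, and the converse is a direct Cauchy--Schwarz estimate with Bessel constant $\sum_{j\in J}\|g_j\|^2$. The additional remarks on the interchange of sums and on where finite dimensionality enters are accurate but not needed beyond what the paper already does.
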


\begin{proof}[\textbf{Proof of Lemma \ref {FDIG}}]
	$(\Longrightarrow)$Let $\{u_i\}_{i=1}^d$ be an orthonormal basis in $\CC^d$. If $\{g_j\}_{j\in J}$ is a Bessel system with Bessel constant $C$, then,  for  $i=1,\ldots,d$
	\[\sum_{j\in J}|\langle u_i,g_j\rangle|^2 \leq C.\]
	Since  $\|g_j\|^2=\sum_{i=1}^{d}|\langle u_i,g_j\rangle|^2$ for $j\in J$, one obtains
	$$\sum_{j\in J}\|g_j\|^2=\sum_{j\in J}\sum_{i=1}^d|\langle u_i,g_j\rangle|^2\leq Cd<\infty.$$ 
	
	$(\Longleftarrow)$ For any $f\in\HH$, one has
	\begin{eqnarray*}
		\sum_{j\in J}|\langle f,g_j\rangle|^2\leq \sum_{j\in J}\|f\|^2\|g_j\|^2=\|f\|^2(\sum_{j\in J}\|g_j\|^2).
	\end{eqnarray*}
	Therefore, $\{g_j\}_{j\in J}$ is Bessel in $\CC^d$.
\end{proof}	

\begin{proof}[\textbf{Proof of Theorem \ref{FDBessel}}]
	$(\Longleftarrow)$ Since $A$ is a normal operator on $\HH=\CC^d$,  it is clear that $A=\sum_{i\in I}\lambda_iP_i$ where $P_iP_j=0$  for $i\ne j$, $I=\{i:\lambda_i\neq 0\}$, and $(\sum_{i\in I} P_i)(\CC^d)=M$, where $M=Range(A^*)=Null^\perp (A)=Null^\perp (A^*)$.
	
	For $f\in\CC^d$,  one has
	\begin{eqnarray}
	\sum_{g\in G}\int_{0}^{L}|\langle f,A^tg\rangle|^2dt&=&\sum_{g\in G}\int_{0}^{L}\left|\langle A^{*t}f,g\rangle\right|^2dt
	=\sum_{g\in G}\int_{0}^{L}\left|\sum_{i\in I}\overline{\lambda_i}^t\langle P_if,P_ig\rangle\right|^2dt\nonumber\\
	&\le&\sum_{g\in G}\int_{0}^{L}\|A\|^{2t}\left(\sum_{i\in I}\lvert{\langle P_if,P_ig\rangle}\rvert\right)^2dt\nonumber\\
	&\leq&\int_{0}^{L}\|A\|^{2t}\sum_{g\in G}\left(\sum_{i\in I}\|P_if\|^2\right)\left(\sum_{i\in I}\|P_ig\|^2\right)dt\nonumber\\
	&\leq&\int_{0}^{L}\|A\|^{2t}\|P_Mf\|^2 \sum_{g\in G}\|P_Mg\|^2dt\leq C_1\cdot C_{P_M\G}\cdot \|f\|^2, \nonumber
	\end{eqnarray}	
	where $ C_1=\begin{cases}
	(\|A\|^{2L}-1)/\ln(\|A\|^2),\|A\|\neq 1\\
	L,\|A\|=1\\
	\end{cases}$ and $C_{P_M\G}=\sum_{g\in G}\|P_Mg\|^2$.
	
	In addition, one can use  Lemma \ref {FDIG} to conclude that $C_{P_M\G}=\sum_{g\in G}\|P_Mg\|^2< \infty$. Therefore, $\{A^tg\}_{g\in\G,t\in[0,L]}$ is  Bessel in $\CC^d$.\\
	($\Longrightarrow$)
	Since $A$ is normal, $A$ can be written as $A=\sum_{i\in I}\lambda_iP_i$, with $\rank(P_i)=1$ (in this representation, we allow $\lambda_i=\lambda_j$ for $i\ne j$) and $I=\{i:\lambda_i\neq 0\}$, $P_iP_j=0$   for $i\ne j$,  and $(\sum_{i\in I}P_i)(\CC^d)=M$. 	
	Specifically, by setting $f=u_i$, where $u_i$ is a unit vector in the one dimensional space $P_i(\CC^d)$ with $i\in I$, one has
	\begin{eqnarray*}
	\sum_{g\in\G}\int_{0}^{L}|\langle u_i,A^tg\rangle|^2dt&=&\sum_{g\in\G}\int_{0}^{L}|\langle u_i,\lambda_i^tP_ig\rangle|^2dt \\
	&=&\sum_{g\in\G}\int_{0}^{L}\lvert \lambda_i\rvert^{2t}\|P_ig\|^2dt \\
	&=&\begin{cases}
	L\sum_{g\in\G}\|P_ig\|^2,\quad |\lambda_i|=1\\
	\frac{|\lambda_i|^{2L}-1}{2\ln|\lambda_i|}\sum_{g\in\G}\|P_ig\|^2,\text{ otherwise}.
	\end{cases}.
	\end{eqnarray*}
	In addition, since by assumption $\{A^tg\}_{g\in\G,t\in[0,L]}$ is a Bessel system in $\CC^d$ with Bessel constant $C$, then
	$\sum_{g\in\G}\int_{0}^{L}|\langle u_i,A^tg\rangle|^2dt\leq C\|u_i\|^2=C$. Hence, for each $i$,
	\begin{equation*}
	\sum_{g\in\G}|\\|P_ig\|^2<\infty.
	\end{equation*}
	Therefore, summing over (the finitely many) $i\in I$ we obtain
	\begin{equation*}
	\sum_{g\in\G}|\\|P_Mg\|^2<\infty.
	\end{equation*}
	If $f\in M=Range(A^*)$, then
	\begin{eqnarray*}
	\sum_{g\in\G}|\langle f,g\rangle|^2&=&\sum_{g\in\G}\left\lvert \sum_{i\in I} \langle P_if,P_ig\rangle\right\rvert^2\nonumber\\
	&\le&\sum_{g\in\G}\left({\sum_{i\in I} \|P_if\|^2}\right)\left({\sum_{i\in I} \|P_ig\|^2}\right)\nonumber\\
	&=&
	\|f\|^2\sum_{g\in\G} \|P_Mg\|^2.
	\end{eqnarray*}
	Thus, $P_M\G$ is Bessel in $M$.
\end{proof}
Before  proving Theorem \ref{MainBessel ID}, we first state and prove the following  lemmas.
\begin{lemma}\label{BT}
	Let $A\in\mathcal{B}(\HH)$ be an invertible  operator in $\HH$, then a countable set $\G \subseteq \HH$ is a Bessel system in $\HH$ if and only if $\tilde{\G}=A\G$ is   a Bessel system  in $\HH$.
\end{lemma}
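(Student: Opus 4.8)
The plan is to reduce everything to the elementary identity $\langle f, Ag\rangle = \langle A^{*}f, g\rangle$ together with the boundedness of $A^{*}$ and, when $A$ is invertible, of $(A^{*})^{-1} = (A^{-1})^{*}$.

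First I would prove the forward direction, which does not even use invertibility of $A$. Suppose $\G=\{g_j\}_{j}$ is Bessel in $\HH$ with Bessel constant $C_{\G}$. Then for every $f\in\HH$,
$$\sum_{j}|\langle f, Ag_j\rangle|^{2} = \sum_{j}|\langle A^{*}f, g_j\rangle|^{2} \le C_{\G}\|A^{*}f\|^{2} \le C_{\G}\|A\|^{2}\|f\|^{2},$$
using $\|A^{*}\|=\|A\|$. Hence $\tilde{\G} = A\G$ is Bessel with constant $C_{\G}\|A\|^{2}$. For the converse I would invoke invertibility: if $A$ is invertible then $A^{-1}\in\mathcal{B}(\HH)$, and $\G = A^{-1}(A\G) = A^{-1}\tilde{\G}$. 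Applying the forward direction already established, now with $A^{-1}$ in place of $A$ and $\tilde{\G}$ in place of $\G$, shows that if $\tilde{\G}$ is Bessel with constant $C_{\tilde{\G}}$ then $\G$ is Bessel with constant $C_{\tilde{\G}}\|A^{-1}\|^{2}$. This closes the equivalence.

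There is essentially no obstacle here; the only point worth being careful about is that the forward direction requires no hypothesis on $A$ beyond boundedness, so it can be reused verbatim for $A^{-1}$ in the converse, which avoids repeating the computation. Alternatively one could argue the converse directly: for $f\in\HH$, write $\sum_{j}|\langle f, g_j\rangle|^{2} = \sum_{j}|\langle (A^{*})^{-1}f, Ag_j\rangle|^{2} \le C_{\tilde{\G}}\|(A^{*})^{-1}\|^{2}\|f\|^{2}$, using $(A^{*})^{-1}=(A^{-1})^{*}$ and the Bessel bound for $\tilde{\G}$. Either way the lemma follows; I would present the reduction to the already-proved direction since it is shorter.
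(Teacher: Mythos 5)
Your proof is correct and follows essentially the same route as the paper: the forward direction is the identical computation via $\langle f,Ag\rangle=\langle A^*f,g\rangle$, and your "alternative" direct argument for the converse, using $\langle f,g\rangle=\langle (A^*)^{-1}f,Ag\rangle$ with $(A^*)^{-1}=(A^{-1})^*$, is exactly the paper's proof. Packaging the converse as the forward direction applied to $A^{-1}$ is only a cosmetic difference.
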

\begin{proof}[\textbf{Proof of Lemma \ref{BT}}]
	$(\Longrightarrow)$ 
	For all $ f\in\HH$,
	\begin{eqnarray*}
		\sum_{g\in\G}|\langle f,Ag\rangle|^2&=&\sum_{g\in\G}|\langle A^*f,g\rangle|^2\\
		&\leq&C\|A^*f\|_2^2\leq C\|A\|_2^2\|f\|_2^2,
	\end{eqnarray*}where $C$ is a Bessel constant of the Bessel system $\G$.
	Therefore, $A\G$ is a Bessel system in $\HH.$\\
	$(\Longleftarrow)$ For all $f\in\HH$,
	\begin{eqnarray*}
		\sum_{g\in\G}|\langle f,g\rangle|^2&=&\sum_{g\in\G}\left|\langle (A^*)^{-1}f,Ag\rangle\right|^2\\
		&\leq&C_1\|(A^*)^{-1}f\|_2^2\leq C_1\|A^{-1}\|_2^2\|f\|_2^2,
	\end{eqnarray*}
	where $C_1$ is a Bessel constant of the Bessel system $A\G$.
	Therefore, $\G$ is a Bessel system in $\HH$. 
\end{proof}

%\begin{lemma}\label {SpecInvNorm} 
%	If	$A$ is an invertible normal operator in $\HH$, then for $z\in\sigma(A)$,  $\|A^{-1}\|^{-1}\leq |z|\leq\|A\|$.
%\end{lemma}
%\begin{proof}[\textbf{Proof of Lemma \ref{SpecInvNorm}}] The assertions are inferred  from the following two inequalities \eqref{upperI}, \eqref{lowerI}. 
%	
%	For $|z|>\|A\|$ and non-zero $f\in\HH$,
%	\begin{equation}\label{upperI}
%	\|(A-zI)f\|\geq |z|\|f\|-\|Af\|\geq(|z|-\|A\|)\|f\|.
%	\end{equation} 
%	By the non-singularity of $A$, one has $Af\neq 0$ for all non-zero $f\in\HH$. 
%	
%	By the boundedness of $A^{-1}$, for $|z|<\|A^{-1}\|^{-1}$ and all  non-zero $f\in\HH$, \begin{eqnarray}\label{lowerI}
%	\|(A-zI)f\|&\geq&\|Af\|-|z|\|A^{-1}(Af)\|\nonumber\\
%	&\geq&(1-|z|\|A^{-1}\|)\|Af\|\nonumber\\
%	&\geq&(1-|z|\|A^{-1}\|)\|A^{-1}\|^{-1}\|f\|.
%	\end{eqnarray}
%\end{proof}
\begin{proof}[\textbf{Proof of Theorem \ref{MainBessel ID}}]
	$(\Longleftarrow)$ See  Proposition \ref{BesselEqu}.	\\
	$(\Longrightarrow)$
	Since $A$ is a normal operator in $\HH$, by the Spectral Theorem,  there exists a unitary operator $U$ such that $$UAU^{-1}=N_{\mu_{\infty}}^{(\infty)}\oplus N_{\mu_1}^{(1)}\oplus N_{\mu_2}^{(2)}\oplus\ldots$$ and $\mu$ is defined as by \eqref{scalar spectral measure}.  Therefore,  the task of proving that  $\G$ is a Bessel system in $\HH$ is equivalent to the task of showing that $U\G$ is a Bessel system in $\mathcal{W}=U\HH$.
	Let  $T:\mathcal{W}\rightarrow \mathcal{W}$ be the operator defined by:
	\begin {equation}\label {DefofT} T \tilde{f}(z):=\int_0^\ell z^tdt\tilde{f}(z),\text{ for all } \tilde{f}\in\mathcal{W} \text{ and }z\in\sigma(A) \text{ with }\ell=\min\{L,1/2\}.
\end{equation}   
The condition that  $\ell=\min\{L,1/2\}$ ensures that $T$ is an invertible operator as will be proved later.  

By Lemma \ref{BT},  $U\G$ is a Bessel system in $\mathcal{W}$ if and only if $T(U\G)$ is a Bessel system in $\mathcal{W}$  as long as  $T$ is a bounded  invertible normal operator. The  fact that $T$ is a bounded invertible operator is stated in the following lemma whose proof is postponed till after the completion of the proof of   this theorem.
\begin{lemma}\label{boundedT}
	$T$ is a bounded invertible operator in $\mathcal{W}$.
\end{lemma}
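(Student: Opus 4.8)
The plan is to recognize $T$ as a multiplication operator and reduce the lemma to two elementary estimates on its symbol. Put $\phi(z):=\int_0^\ell z^t\,dt$. Then $T$ acts on every direct summand of $\mathcal W$ by $T\tilde f(z)=\phi(z)\tilde f(z)$, and $\phi$ is a Borel function of $z$: for $z\neq 1$ one has $\phi(z)=(z^\ell-1)/\ln z$ with $\ln z=\ln|z|+i\arg z\neq 0$, while $\phi(1)=\ell$. (In particular $T$ is automatically normal, with adjoint multiplication by $\overline{\phi}$.) Consequently $T$ is bounded on $\mathcal W$ if and only if $\phi$ is $\mu$-essentially bounded on $\sigma(A)$, and invertible if and only if, in addition, $1/\phi$ is $\mu$-essentially bounded on $\sigma(A)$; so only these two facts remain to be checked.

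Boundedness is immediate: $|\phi(z)|\le\int_0^\ell|z|^t\,dt\le\ell\max\{1,|z|^\ell\}$, so $\|T\|\le\ell\max\{1,\|A\|^\ell\}<\infty$. This uses neither the choice of $\ell$ nor invertibility of $A$.

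The substance is a uniform lower bound on $|\phi|$ over $\sigma(A)$, and this is where the hypotheses that $A$ is invertible and that $\ell=\min\{L,\tfrac12\}\le\tfrac12$ are used. Since $A$ is a normal invertible operator, $0\notin\sigma(A)$ and $\sigma(A)$ is compact, so there is $a>0$ with $|z|\ge a$ for all $z\in\sigma(A)$. I would estimate the real part of $\phi$, using $\Re(z^t)=|z|^t\cos(t\arg z)$:
\[\Re\phi(z)=\int_0^\ell|z|^t\cos(t\arg z)\,dt .\]
Because $\arg z\in[-\pi,\pi)$ and $0\le t\le\ell\le\tfrac12$, we have $|t\arg z|\le\pi/2$ on all of $[0,\ell]$, so the integrand is nonnegative; and on $[0,\ell/2]$ we have $|t\arg z|\le\pi/4$, hence $\cos(t\arg z)\ge 1/\sqrt2$ and $|z|^t\ge\min\{1,a^{\ell/2}\}$ there. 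Dropping the nonnegative tail over $[\ell/2,\ell]$ then gives
\[|\phi(z)|\ \ge\ \Re\phi(z)\ \ge\ \int_0^{\ell/2}|z|^t\cos(t\arg z)\,dt\ \ge\ \frac{\ell}{2\sqrt2}\,\min\{1,a^{\ell/2}\}\ =:\ c_0\ >\ 0\]
for every $z\in\sigma(A)$. Hence $1/\phi$ is bounded by $c_0^{-1}$ on $\sigma(A)$, multiplication by $1/\phi$ is a bounded operator on $\mathcal W$, and it is a two-sided inverse of $T$; thus $T$ is invertible.

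I expect the only delicate point to be the precise role of the constraint $\ell\le\tfrac12$: it is exactly what confines $t\arg z$ to $[-\pi/2,\pi/2]$ (and to $[-\pi/4,\pi/4]$ on $[0,\ell/2]$), making $\Re(z^t)\ge 0$ throughout $[0,\ell]$ and keeping $\cos(t\arg z)$ bounded away from $0$ on a fixed subinterval. Without this bound, $z^\ell$ could equal $1$ at some $z\neq 1$ on the unit circle, so $\phi$ would vanish there and invertibility would fail; and invertibility of $A$ is likewise essential, since if $0\in\sigma(A)$ then $\phi(z)\to 0$ as $z\to 0$ within the spectrum, $1/\phi$ is unbounded, and $T$ is not invertible — in accordance with Example \ref{example1}.
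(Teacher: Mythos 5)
Your proof is correct, and while it shares the paper's overall strategy -- view $T$ as multiplication by the symbol $\phi(z)=\int_0^\ell z^t\,dt$ and reduce everything to a uniform upper and lower bound for $|\phi|$ on $\sigma(A)$ -- the way you obtain the lower bound is genuinely different and cleaner. The paper splits the annulus $S=\{\|A^{-1}\|^{-1}\le|z|\le\|A\|\}$ into $S_1=\{|\arg z|\le\pi/2\}$ and $S_2=S\setminus S_1$, handles $S_1$ by a non-quantitative compactness argument (continuity of $|\phi|$ plus the observation that $\phi$ has no zero there), and handles $S_2$ by the explicit estimate $|z^\ell-1|\ge|z|^\ell|\sin(\ell\arg z)|$ together with an upper bound on $|\ln z|$. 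Your single estimate
\[
|\phi(z)|\ \ge\ \Re\phi(z)\ \ge\ \int_0^{\ell/2}|z|^t\cos(t\arg z)\,dt\ \ge\ \tfrac{\ell}{2\sqrt2}\min\{1,a^{\ell/2}\}
\]
replaces both cases at once, yields an explicit constant (the paper's route gives no constant on $S_1$), and isolates exactly where the two hypotheses enter: invertibility of $A$ gives $|z|\ge a>0$, and $\ell\le\tfrac12$ confines $t\arg z$ to $[-\pi/2,\pi/2]$ so the integrand is nonnegative and bounded away from $0$ on $[0,\ell/2]$. Your upper bound $\ell\max\{1,\|A\|^\ell\}$ is also more direct than the paper's appeal to continuity of $\psi(x)=(x^\ell-1)/\ln x$ on a compact interval. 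The only cosmetic difference is that the paper verifies boundedness and invertibility by computing $\|T\tilde f\|^2=\int|\phi|^2\|\tilde f(z)\|^2\,d\mu$ and citing the normal-operator criterion, whereas you invoke the standard characterization of bounded invertible multiplication operators; both are legitimate, and your closing remarks correctly identify why each hypothesis is needed.
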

So, to finish the proof of  Theorem \ref {MainBessel ID}, it only remains to show that $T(U\G)$ is a Bessel system in $\mathcal{W}$ which we do next. 

Since $\{A^tg\}_{g\in\G,t\in[0,L]}$ is a Bessel system in $\HH$, and  $0<\ell\leq L$, one has 
that, for all $f\in\HH$,
\begin{equation*}\label{Besselell}
\sum_{g\in\G}\int_{0}^{\ell}|\langle f,A^tg\rangle|^2dt\leq C\|f\|^2.
\end{equation*} 
Thus, using H\"older's inequality, we get
\begin{equation} \label{fTg}\sum_{g\in\G}\left|\int_{0}^{\ell}\langle f,A^tg\rangle dt\right|^2\leq \ell\cdot \sum_{g\in\G}\int_{0}^{\ell}|\langle f,A^tg\rangle|^2dt\leq\ell C\|f\|^2. 
\end{equation}
In addition,
\begin{eqnarray} 
\sum_{g\in\G}\left|\int_{0}^{\ell}\langle f,A^tg\rangle dt\right|^2&=&\sum_{g\in\G}\left|\int_{0}^{\ell}\int_\CC \overline{z}^t\langle \tilde{f}(z),\tilde{g}(z)\rangle d\mu(z)dt\right|^2\nonumber\\
&=&\sum_{g\in\G}\left|\int_\CC  \int_{0}^{\ell}\overline{z}^t dt\langle\tilde{f}(z),\tilde{g}(z)\rangle d\mu(z)\right|^2\nonumber\\
&=&\sum_{g\in\G}|\langle \tilde{f},T\tilde{g}\rangle|^2\label{inequation 18}.
\end{eqnarray}
Together, \eqref{fTg} and \eqref{inequation 18} induce  the following inequality:
\begin{equation*}
\sum_{g\in\G}|\langle\tilde{f}, T\tilde{g}\rangle|^2\leq\ell C\|f\|^2=\ell C\|\tilde{f}\|^2,\text{ for all } f\in\HH.
\end{equation*}
This shows that $T(U\G)$ is a Bessel system in $\mathcal{W}$. 

In conclusion, by Lemma \ref{boundedT}, $T$ is bounded invertible. In addition, $T$ is normal. Hence, $U\G$ is a Bessel system in $\mathcal{W}$ by Lemma \ref{BT}. Consequently, $\G$ is a Bessel in $\HH$.
\end{proof}

\begin{proof}[\textbf{Proof of Lemma \ref{boundedT}}]
	\begin{eqnarray*}
		\|T\tilde{f}\|^2&=&\langle T\tilde{f},T\tilde{f}\rangle\\
		&=&\left\langle \int_0^\ell z^tdt\tilde{f}(z),\int_0^\ell z^{\tau}d\tau\tilde{f}(z)\right\rangle_{L^2(\sigma(A))}\\
		&=&\int_\CC \int_0^\ell \int_{0}^{\ell} z^t \overline{z}^{\tau}\langle \tilde{f}(z),\tilde{f}(z)\rangle dtd\tau d\mu(z)\\
		&=&\int_\CC |\phi(z)|^2\|\tilde{f}(z)\|^2d\mu(z),
	\end{eqnarray*} 
	where
	\begin{equation} \label {defPhi}
	\phi(z)=	\begin{cases}
	\ell,\quad z=1\\
	0,\quad z=0\\
	\frac{z^{\ell}-1}{\ln(z)},\text{ otherwise }
	\end{cases}.
	\end{equation}
	Let $m=\inf\{|\phi(z)|:z\in\sigma(A)\}$ and $M=\sup\{|\phi(z)|:z\in\sigma(A)\}$. As  shown below in claim \ref {lemma 4.11}, $m>0$ and $M<\infty$. Thus

	\begin{eqnarray*}
		\|T\tilde{f}\|^2&\leq&\int_\CC M^2\|\tilde{f}(z)\|^2d\mu(z)=M^2\|\tilde{f}\|^2,\\
		\|T\tilde{f}\|^2&\geq&\int_\CC m^2\|\tilde{f}(z)\|^2d\mu(z)=m^2\|\tilde{f}\|^2,\text{ for all } \tilde{f}\in\mathcal{W}.
	\end{eqnarray*}
	Since $T$ is normal, it follows  that $T$ is a bounded invertible operator (see \cite[Theorem 12.12]{rudinfa91}).
	We finish by proving the following fact that was used in the proof of this lemma.
	\begin{claim}\label{lemma 4.11} Let $\phi$ be the function defined in \eqref{defPhi}. Then
		$M=\sup\{|\phi(z)|:z\in\sigma(A)\}<\infty$,  and $m=\inf\{|\phi(z)|:z\in\sigma(A)\}>0$.
	\end{claim}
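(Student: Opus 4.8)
The plan is to base everything on the integral representation $\phi(z)=\int_0^{\ell}z^{t}\,dt$, valid for every $z\in\CC$: the three branches in \eqref{defPhi} are exactly the value of this integral, since $\int_0^{\ell}e^{t\ln z}\,dt=\tfrac{z^{\ell}-1}{\ln z}$ when $\ln z\neq0$, and the values at $z=1$ and $z=0$ are immediate. Two elementary facts about $\sigma(A)$ will be used throughout: it is compact, and it lies in an annulus $\{\,r\le|z|\le R\,\}$ with $0<r\le R<\infty$ — here $R\le\|A\|<\infty$ because $A$ is bounded, and $r=\operatorname{dist}(0,\sigma(A))>0$ because $0\notin\sigma(A)$; this last point is the only place the invertibility of $A$ enters. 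The upper bound is then immediate from the triangle inequality: for $z\in\sigma(A)$,
\[
|\phi(z)|\le\int_0^{\ell}|z|^{t}\,dt\le\ell\max\{1,R^{\ell}\}<\infty,
\]
so $M<\infty$.

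For the lower bound I would \emph{rotate}. Write $z=\rho e^{i\theta}$ with $\rho=|z|\in[r,R]$ and $\theta=\arg z\in[-\pi,\pi)$, so that $\phi(z)=\int_0^{\ell}\rho^{t}e^{it\theta}\,dt$, and multiply by the unit scalar $e^{-i\ell\theta/2}$:
\[
e^{-i\ell\theta/2}\,\phi(z)=\int_0^{\ell}\rho^{t}\,e^{\,i\theta(t-\ell/2)}\,dt.
\]
Because $0<\ell\le\tfrac12$ and $|\theta|<\pi$, for every $t\in[0,\ell]$ one has $|\theta(t-\ell/2)|\le\tfrac{\ell|\theta|}{2}\le\tfrac{\pi}{4}$, hence $\cos\!\big(\theta(t-\ell/2)\big)\ge\cos\tfrac{\pi}{4}=\tfrac{1}{\sqrt2}$. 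Taking real parts and using $\rho\ge r>0$ (so $\rho^{t}\ge r^{t}$ for $t\ge0$),
\[
|\phi(z)|\ \ge\ \Re\!\big(e^{-i\ell\theta/2}\phi(z)\big)\ =\ \int_0^{\ell}\rho^{t}\cos\!\big(\theta(t-\ell/2)\big)\,dt\ \ge\ \frac{1}{\sqrt2}\int_0^{\ell}r^{t}\,dt.
\]
The right-hand side is a strictly positive constant (the integrand $r^{t}$ is positive and $\ell>0$) independent of $z$, so $m\ge\tfrac{1}{\sqrt2}\int_0^{\ell}r^{t}\,dt>0$, which finishes the proof.

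The one delicate point — and the reason the construction in \eqref{DefofT} uses $\ell=\min\{L,1/2\}$ rather than $\ell=L$ — is obtaining a \emph{uniform} lower bound: rotating by $e^{-i\ell\theta/2}$ recenters the arc $\{e^{it\theta}:0\le t\le\ell\}$ symmetrically about $1$, and the constraint $\ell\le\tfrac12$ (together with $|\theta|<\pi$) is exactly what keeps that arc inside the sector $|\arg|\le\pi/4$, so the cosine stays bounded away from $0$ uniformly over $z\in\sigma(A)$; without it the estimate degenerates as $|\theta|\to\pi$. I do not anticipate any other obstacle. A slightly more abstract alternative would be to note that $\Phi(\rho,\theta):=\int_0^{\ell}\rho^{t}e^{it\theta}\,dt$ is continuous on the compact rectangle $[r,R]\times[-\pi,\pi]$, that $\phi(z)=\Phi(|z|,\arg z)$, and that $\Phi$ never vanishes there — a zero would force $\rho^{\ell}e^{i\ell\theta}=1$, i.e.\ $\rho=1$ and $\ell\theta\in2\pi\Z$, which for $|\theta|\le\pi$ and $\ell\le\tfrac12$ only leaves $(\rho,\theta)=(1,0)$, where $\Phi=\ell\neq0$ — so that $m\ge\min|\Phi|>0$ by compactness; but the explicit rotation estimate above is shorter.
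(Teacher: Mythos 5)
Your proof is correct, and for the lower bound it takes a genuinely different route from the paper's. Both arguments start from $\phi(z)=\int_0^{\ell}z^t\,dt$ and both get the upper bound the same way (triangle inequality plus continuity of $x\mapsto\frac{x^{\ell}-1}{\ln x}$ on $[\|A^{-1}\|^{-1},\|A\|]$). For the infimum, the paper splits the annulus $S=\{\|A^{-1}\|^{-1}\le|z|\le\|A\|\}$ into the half $S_1=\{|\arg z|\le\pi/2\}$, where it invokes compactness, continuity, and nonvanishing of $\phi$ (so the bound there is non-quantitative and the nonvanishing is asserted rather than checked), and the complementary half $S_2$, where it bounds $|z^{\ell}-1|\ge|z|^{\ell}|\sin(\ell\arg z)|$ from below and $|\ln z|$ from above. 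Your rotation trick --- multiplying by $e^{-i\ell\theta/2}$ so the arc $\{e^{it\theta}\}_{0\le t\le\ell}$ is recentered about $1$, then taking real parts and using $\ell\le 1/2$ to keep the phase in $[-\pi/4,\pi/4]$ --- handles all of $\sigma(A)$ in one stroke and produces the explicit constant $m\ge\frac{1}{\sqrt2}\int_0^{\ell}\|A^{-1}\|^{-t}\,dt$, which the paper's compactness step does not. It also isolates exactly where the choice $\ell=\min\{L,1/2\}$ and the invertibility of $A$ are used, and your parenthetical verification that $\Phi$ has no zero on the rectangle closes the small gap the paper leaves in its $S_1$ case. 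One cosmetic remark: since $\arg z\in[-\pi,\pi)$ the value $\theta=-\pi$ is attained, so you should say $|\theta|\le\pi$ rather than $|\theta|<\pi$; the bound $|\theta(t-\ell/2)|\le\pi/4$ and hence the whole estimate survive unchanged.
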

	\noindent {\bf Proof of Claim \ref{lemma 4.11}.}
	Since $A$ is a bounded invertible normal operator,  it follows that $\|A^{-1}\|^{-1}\leq|z|\leq\|A\|$ for $z\in\sigma(A)$. Let $S=\{z\in\mathbb{C}:\|A^{-1}\|^{-1}\leq|z|\leq\|A\|\}$. Since $\sigma(A)\subset S$,  $M\leq\sup\{|\phi(z)|:z\in S \}$ and $m\geq\{|\phi(z)|:z\in S\}$. Therefore, in order to prove Claim \ref{lemma 4.11}, it is sufficient to  show that
	$\sup\{|\phi(z)|:z\in S\}<\infty$, $\inf\{|\phi(z)|:z\in S\}>0$.
	
	To prove that $\sup\{|\phi(z)|:z\in S\}<\infty$, it is noteworthy  that 
	\begin{eqnarray*}
		|\phi(z)|&=&\left|\int_{0}^{\ell}z^{t}dt\right|\leq\int_{0}^{\ell}|z^t|dt	=\int_{0}^{\ell}|z|^tdt=\begin{cases}
			\ell,\quad \quad z\in S\text{ and } |z|=1\\
			\frac{|z|^{\ell}-1}{\ln|z|},z\in S\text{ and }|z|\neq 1.
		\end{cases}
	\end{eqnarray*}
	Let  $$\psi(x)=\begin{cases}
	\ell,x=1\\
	\frac{x^{\ell}-1}{\ln x},x\in\mathbb{R}^+\setminus\{1\},
	\end{cases}$$
	and note that (since  $\lim_{x\rightarrow 1}\frac{x^{\ell}-1}{\ln x}=\ell=\psi(1)$)   $\psi$ is  continuous at $x=1$. In addition, $\frac{x^{\ell}-1}{\ln x}$ is a continuous function on $\mathbb{R}^+\setminus\{1\}$. Hence, $\psi$ is continuous  on $\mathbb{R}^+$. Particularly, $\psi$ is continuous on the closed interval $[\|A^{-1}\|^{-1},\|A\|]$. Therefore,	  $$\sup\{|\phi(z)|:z\in S\}=\max_{x\in[\|A^{-1}\|^{-1},\|A\|]}\psi(x)<\infty.$$ 
	
	Finally, it remains to show that $\inf\{\left|\phi(z)\right|:z\in S\}>0$. First, we divide $S$ into two sets with $S_1=\{z\in S:\arg(z)\in[-\pi/2,\pi/2]\}$ and $S_2=S\setminus S_1$. Since $|\phi(z)|$ is a continuous function on $S_1$ and $S_1$ is compact, there exists $z_0\in S_1$ such that $|\phi(z_0)|=\inf\{|\phi(z)|:z\in S_1\}$. In addition, $|\phi(z)|$ has no root on $S_1$. Hence, $\inf\{|\phi(z)|:z\in S_1\}>0$.
	
	For $z\in S_2$, $\pi/2\leq|\arg(z)|\leq\pi$. Therefore, 
	\begin{eqnarray*}
		|z^{\ell}-1|&=&||z|^{\ell}e^{i\ell\arg(z)}-1|\\
		&\geq&|z|^{\ell}|\sin(\ell\arg(z))|\\
		&\geq&\min\{\|A^{-1}\|^{-\ell} \sin(\ell\pi),\|A^{-1}\|^{-\ell} \sin(\ell\pi/2)\}>0,
	\end{eqnarray*}
	where the last inequality follows from the fact that $0<\ell<1$ (in particular,  we  chose $\ell=\min\{L,1/2\}$ as in Definition \eqref {DefofT} for $T$).
	In addition, for $z \in S_2$, one has 
	\begin{eqnarray*}
		|\ln(z)|&\leq&|\ln(|z|)|+|\arg(z)|\leq\max\{|\ln(\|A^{-1}\|^{-1})|,|\ln(\|A\|)|\}+\pi<\infty.
	\end{eqnarray*}
	 Hence, $\inf\{|\phi(z)|: z\in S_2 \}>0$. 
	Combining the estimates on $S_1$ and $S_2$, we conclude that 
	$\inf\{|\phi(z)|:z\in S\}>0$. 
\end{proof}

\section{Frames generated by the action of bounded normal operators.}\label{frames}
In this section, we study some properties of a semi-continuous frame of the form $\{A^tg\}_{g\in\G,t\in[0,L]}$ generated by the continuous action of a normal operator $A\in \mathcal{B}(\HH)$ and  relate them to the properties of the discrete systems generated by  its  time discretization. We  also  show that, under the appropriate conditions, if $\{A^tg\}_{g\in\G,t\in[0,L_1]}$ is a semi-continuous frame for some positive number $L_1$, then $\{A^tg\}_{g\in\G,t\in[0,L]}$ a semi-continuous frame for all $0<L<\infty$. Before presenting the two main theorems, we first provide some necessary conditions for obtaining semi-continuous frames, and treat some special cases. The proofs are postponed to Subsection \ref {ProofFrame}. 

The following proposition (whose proof is obtained by direct calculation) provides a necessary condition to ensure the lower bound of the semi-continuous frame generated by $A\in \mathcal{B}(\HH)$.

\begin{proposition}\label{FrLrBd}
	Let $A\in\mathcal{B}(\HH)$ be an invertible normal operator, 
	$L$ be a finite positive number, and $\G\subseteq \HH$ be a countable set of vectors. If, for all $f\in\HH$,
	\begin{equation}\label{equationlowerdis}
	\sum_{g\in\G}|\langle f,g\rangle|^2\geq c\|f\|^2, 
	\end{equation}
	where $c$ is a positive constant,
	then there exists a finite positive constant $C$ such that 
	\begin{equation}\label{equationlowercont}
	\sum_{g\in\G}\int_{0}^{L}|\langle f,A^tg\rangle|^2dt\geq C\|f\|^2,  \text{ for all } f\in\HH .
	\end{equation}
\end{proposition}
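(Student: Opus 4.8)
The plan is to run the argument from the proof of Theorem~\ref{MainBessel ID} backwards. There the multiplication operator $T$ of \eqref{DefofT} was used to transfer a Bessel bound \emph{down} from $\{A^tg\}_{g\in\G,t\in[0,\ell]}$ to $\G$; here the very same operator will transfer the lower bound \eqref{equationlowerdis} for $\G$ \emph{up} to a lower bound for $\{A^tg\}_{g\in\G,t\in[0,L]}$. Concretely, set $\ell=\min\{L,1/2\}$ and let $T\tilde h(z)=\phi(z)\tilde h(z)$ with $\phi(z)=\int_0^\ell z^t\,dt$ be as in \eqref{DefofT}--\eqref{defPhi}. By Lemma~\ref{boundedT} and Claim~\ref{lemma 4.11}, $T$ is bounded and invertible on $\mathcal{W}$ and $m:=\inf\{|\phi(z)|:z\in\sigma(A)\}>0$; since $T$ and $T^*$ are multiplication by $\phi$ and $\overline{\phi}$ respectively and the scalar spectral measure $\mu$ is supported on $\sigma(A)$, this gives $\|T^*\tilde h\|\ge m\|\tilde h\|$ for every $\tilde h\in\mathcal{W}$.

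First I would recall from the chain of equalities ending in \eqref{inequation 18} that for every $f\in\HH$ and $g\in\G$ one has $\int_0^\ell\langle f,A^tg\rangle\,dt=\langle\tilde f,T\tilde g\rangle_{\mathcal{W}}$; this is Fubini applied to the spectral representation $\langle f,A^tg\rangle=\int_{\sigma(A)}\overline{z}^{\,t}\langle\tilde f(z),\tilde g(z)\rangle\,d\mu(z)$, which is legitimate because $|z|^t$ is bounded on $[0,\ell]\times\sigma(A)$ and $z\mapsto\langle\tilde f(z),\tilde g(z)\rangle\in L^1(\mu)$. Combining the Cauchy--Schwarz inequality in $t$ from \eqref{fTg} with $\int_0^\ell\le\int_0^L$, I then obtain
\[
\sum_{g\in\G}|\langle T^*\tilde f,\tilde g\rangle|^2=\sum_{g\in\G}|\langle\tilde f,T\tilde g\rangle|^2=\sum_{g\in\G}\Bigl|\int_0^\ell\langle f,A^tg\rangle\,dt\Bigr|^2\le\ell\sum_{g\in\G}\int_0^L|\langle f,A^tg\rangle|^2\,dt .
\]

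It remains to bound the left-hand side below. Since $U$ is unitary, hypothesis \eqref{equationlowerdis} is equivalent to $\sum_{g\in\G}|\langle h,\tilde g\rangle|^2\ge c\|h\|^2$ for all $h\in\mathcal{W}$; applying this with $h=T^*\tilde f$ and using $\|T^*\tilde f\|\ge m\|\tilde f\|=m\|f\|$ yields $\sum_{g\in\G}|\langle T^*\tilde f,\tilde g\rangle|^2\ge cm^2\|f\|^2$. Chaining this with the previous display proves \eqref{equationlowercont} with the explicit constant $C=cm^2/\ell>0$.

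Given the results already available, the argument has essentially no obstacle left: the only place where the standing hypothesis that $A$ is invertible is genuinely used is in guaranteeing that $T$ is invertible, i.e.\ that $m>0$ — and this is exactly what forces the truncation $\ell\le 1/2$ — but both of these are handled by Lemma~\ref{boundedT} and Claim~\ref{lemma 4.11}. The one mildly non-obvious move in the proof itself is to feed $T^*\tilde f$, rather than $\tilde f$, into the lower frame inequality for $\G$. A more elementary route would try to bound $\int_0^L|\langle f,A^tg\rangle|^2\,dt$ from below by a multiple of $|\langle f,g\rangle|^2$ using continuity of $t\mapsto\langle f,A^tg\rangle$ at $t=0$; the difficulty there is obtaining a constant uniform over all $g\in\G$, which the operator $T$ takes care of automatically.
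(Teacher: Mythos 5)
Your argument is correct: every step---the identity $\int_0^\ell\langle f,A^tg\rangle\,dt=\langle\tilde f,T\tilde g\rangle$ from \eqref{inequation 18}, the Cauchy--Schwarz bound \eqref{fTg}, and the lower bound $\|T^*\tilde f\|\ge m\|\tilde f\|$ supplied by Claim~\ref{lemma 4.11}---is already justified in Section~\ref{bessel section}, and chaining them does give \eqref{equationlowercont} with $C=cm^2/\ell$. It is, however, a genuinely different and considerably heavier route than the one the paper intends. The paper prints no proof, calling it a ``direct calculation,'' and that calculation (which in fact appears verbatim inside the proof of the second statement of Theorem~\ref{ScToDscr}) is simply
\[
\sum_{g\in\G}\int_0^L|\langle f,A^tg\rangle|^2\,dt=\int_0^L\sum_{g\in\G}|\langle A^{*t}f,g\rangle|^2\,dt\ \ge\ c\int_0^L\|A^{*t}f\|^2\,dt\ \ge\ c\int_0^L\|A^{-1}\|^{-2t}\,dt\;\|f\|^2,
\]
where the last step uses $\|A^{*t}f\|\ge\|(A^{*t})^{-1}\|^{-1}\|f\|=\|A^{-1}\|^{-t}\|f\|$ for an invertible normal $A$, and the final integral equals $\frac{1-\|A^{-1}\|^{-2L}}{2\ln\|A^{-1}\|}$ (or $L$ when $\|A^{-1}\|=1$). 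Both proofs use invertibility in an essential and parallel way---yours through $m=\inf_{z\in\sigma(A)}|\phi(z)|>0$, the direct one through the lower bound on $\|A^{*t}f\|$---and both yield explicit constants. Your route buys nothing extra here beyond the pleasant observation that the operator $T$ of \eqref{DefofT} transfers lower frame bounds as well as upper ones; the direct route is two lines, avoids Section~\ref{bessel section} entirely, and in particular dispenses with the truncation to $\ell=\min\{L,1/2\}$, which in your argument is needed only to keep $\phi$ zero-free.
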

The converse of  Proposition \ref{FrLrBd} is false, even in  finite dimensional space as shown in  Example \ref{FrLrBdEx}. For the special case that  $A$ is  equivalent to a diagonal operator on $\ell^2(\mathbb{N})$ we get: 
\begin{lemma}\label{DOPCase}
	Let $A\in\mathcal{A}$, where $\mathcal{A}$ is defined in \eqref{equationA*},  and let $\G\subseteq \ell^2(\mathbb{N})$ be a countable set of vectors. If $\{A^tg\}_{g\in\G,t\in[0,L]}$ satisfies \eqref {equationlowercont} in $\ell^2(\mathbb{N})$, then
	$$\sum_{g\in\G}\|g\|^2=\infty. $$
\end{lemma}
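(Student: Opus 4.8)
The plan is to diagonalise $A$, test the lower bound \eqref{equationlowercont} against each eigenvector of $A$ separately, and then sum the resulting estimates by Parseval's identity.

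Since $A\in\mathcal A$, the space $\ell^2(\mathbb{N})$ has an orthonormal basis $\{v_k\}_{k\in\mathbb{N}}$ consisting of eigenvectors of $A$, with (real, since $A=A^*$) eigenvalues $d_k$ satisfying $|d_k|\le\|A\|$ for every $k$; this is exactly the representation $A=U^{*}DU$ recalled before Corollary \ref{corollary 3.2} (take $v_k=U^{*}e_k$). Consequently $A^{t}$ acts on this basis by $A^{t}v_k=d_k^{\,t}v_k$, where $|d_k^{\,t}|=|d_k|^{t}$ for $t\in\mathbb{R}$. Expanding an arbitrary $g\in\G$ in the basis $\{v_k\}$ therefore yields, for all $g\in\G$, $t\in[0,L]$ and $k\in\mathbb{N}$,
\[
\langle v_k,A^{t}g\rangle=\overline{d_k^{\,t}}\,\langle v_k,g\rangle,\qquad\text{hence}\qquad
|\langle v_k,A^{t}g\rangle|^{2}=|d_k|^{2t}\,|\langle v_k,g\rangle|^{2},
\]
a quantity that is continuous in $t$, so no measurability issue arises.

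Next I would integrate over $t\in[0,L]$ and sum over $g\in\G$; all terms being non-negative, Tonelli's theorem permits interchanging the sum and the integral, giving
\[
\sum_{g\in\G}\int_{0}^{L}|\langle v_k,A^{t}g\rangle|^{2}\,dt
=\Big(\int_{0}^{L}|d_k|^{2t}\,dt\Big)\sum_{g\in\G}|\langle v_k,g\rangle|^{2}.
\]
Because $0\le|d_k|\le\|A\|$, the bracketed integral is bounded, uniformly in $k$, by the finite constant $C_0=\max\{L,(\|A\|^{2L}-1)/\ln(\|A\|^{2})\}$ — the elementary estimate already used in the proofs of Proposition \ref{BesselEqu} and Theorem \ref{FDBessel}. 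Feeding the unit vector $f=v_k$ into \eqref{equationlowercont} then yields $c\le C_0\sum_{g\in\G}|\langle v_k,g\rangle|^{2}$, i.e.
\[
\sum_{g\in\G}|\langle v_k,g\rangle|^{2}\ \ge\ \frac{c}{C_0}\ >\ 0\qquad\text{for every }k\in\mathbb{N}.
\]

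Finally, summing over $k$ and swapping the two non-negative sums, Parseval's identity for the orthonormal basis $\{v_k\}$ gives
\[
\sum_{g\in\G}\|g\|^{2}
=\sum_{g\in\G}\sum_{k\in\mathbb{N}}|\langle v_k,g\rangle|^{2}
=\sum_{k\in\mathbb{N}}\Big(\sum_{g\in\G}|\langle v_k,g\rangle|^{2}\Big)
\ \ge\ \sum_{k\in\mathbb{N}}\frac{c}{C_0}\ =\ \infty,
\]
since $\ell^2(\mathbb{N})$ is infinite dimensional and the eigenbasis $\{v_k\}$ is therefore infinite, proving the claim. The argument is short and, in my view, has no genuine obstacle; the points that merit a little care are the two interchanges of sum and integral / sum and sum (both justified by non-negativity), the uniform-in-$k$ bound on $\int_0^L|d_k|^{2t}\,dt$ (where the boundedness of $A$ is used), and the observation that the eigenbasis is genuinely infinite so that $\sum_k c/C_0$ truly diverges. (Incidentally, were some $d_k$ equal to $0$, the left-hand side of \eqref{equationlowercont} at $f=v_k$ would vanish, contradicting $c>0$; thus the hypothesis silently forces $A$ to be injective, although this is not needed.)
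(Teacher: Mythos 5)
Your proof is correct and follows essentially the same route as the paper: test the lower bound \eqref{equationlowercont} at each eigenvector of $A$ and use the uniform bound $\int_0^L|d_k|^{2t}\,dt\le C_0$ coming from $|d_k|\le\|A\|$. The only difference is organizational --- you argue directly that each $\sum_{g\in\G}|\langle v_k,g\rangle|^2$ is bounded below by $c/C_0$ and sum over the infinitely many $k$, whereas the paper assumes $\sum_{g\in\G}\|g\|^2<\infty$ and derives a contradiction from the tail $\sum_{g\in\G}\|P_ig\|^2\to 0$.
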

From Lemma \ref {DOPCase}, it follows that the cardinality of $\G$ must be infinite as stated in the following corollary.
\begin{corollary}
	If the  assumptions of Lemma \ref {DOPCase} hold  then $|\G|=+\infty.$  In particular, $|\G|=+\infty $ if $\{A^tg\}_{g\in\G,t\in[0,L]}$ is a frame for $\ell^2(\mathbb{N})$.
\end{corollary}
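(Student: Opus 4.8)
The plan is to obtain the corollary as an immediate consequence of Lemma~\ref{DOPCase}, the only additional ingredient being the elementary fact that a finite family of vectors in a Hilbert space has finite total squared norm.

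First I would apply Lemma~\ref{DOPCase}: since the hypotheses of the corollary are, by definition, exactly those of Lemma~\ref{DOPCase} (namely $A\in\mathcal{A}$, $\G\subseteq\ell^2(\mathbb{N})$ countable, and $\{A^tg\}_{g\in\G,t\in[0,L]}$ satisfying the lower estimate~\eqref{equationlowercont}), the lemma yields
$$\sum_{g\in\G}\|g\|^2=\infty.$$
Next I would argue by contradiction on the cardinality of $\G$: if $\G$ were finite, say $\G=\{g_1,\dots,g_n\}$, then $\sum_{g\in\G}\|g\|^2=\sum_{k=1}^{n}\|g_k\|^2<\infty$, because each $g_k\in\ell^2(\mathbb{N})$ has finite norm. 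This contradicts the displayed identity, so $\G$ cannot be finite; hence $|\G|=+\infty$.

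For the ``in particular'' clause, I would observe that a semi-continuous frame $\{A^tg\}_{g\in\G,t\in[0,L]}$ for $\ell^2(\mathbb{N})$ satisfies, by the definition recorded in~\eqref{SCF}, the lower frame inequality with $c$ equal to its lower frame bound, which is precisely condition~\eqref{equationlowercont}. Thus the hypotheses of Lemma~\ref{DOPCase} are met, and the argument of the previous paragraph applies verbatim to conclude $|\G|=+\infty$.

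I do not anticipate any genuine obstacle: the entire analytic content has been absorbed into Lemma~\ref{DOPCase}, and what remains is a one-line counting argument. The only point worth stating with a little care is that membership in $\ell^2(\mathbb{N})$ forces every individual vector of $\G$ to have finite norm, so that a finite sum of their squared norms is necessarily finite.
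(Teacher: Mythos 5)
Your argument is correct and matches the paper's (implicit) reasoning exactly: Lemma~\ref{DOPCase} gives $\sum_{g\in\G}\|g\|^2=\infty$, which a finite subset of $\ell^2(\mathbb{N})$ cannot achieve, and the frame lower bound is a special case of~\eqref{equationlowercont}. Nothing further is needed.
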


The discretization of continuous frames is a central question and has been studied extensively (see \cite {FR05, FS16}  and the references therein). In particular, Freeman  and Speegle have found necessary and sufficient conditions for the discretization of continuous frames \cite {FS16}.  In our situation, the systems $\{A^tg\}_{g\in\G,t\in[0,L]}$ can be viewed as  continuous frames and the theory in \cite {FS16} may be applied to conclude that the system can be discretized. However, because of the particular structure of the systems $\{A^tg\}_{g\in\G,t\in[0,L]}$, we can say more and obtain finer results for their discretization, as stated in the following theorem. 
%and The following theorem connects the semi-continuous frame $\{A^tg\}_{g\in\G,t\in[0,L]}$ with its time discretization. 

\begin{theorem}\label{ScToDscr}
	Let $A\in\mathcal{B}(\HH)$ be a normal operator on the Hilbert space $\mathcal{H}$ and let $\G$ be a Bessel system of vectors in $\HH$. If  $\{A^tg \}_{g\in\G,t\in[0,L]}$ is a semi-continuous frame for $\mathcal{H}$, then there exists $\delta>0$ such that for any finite set $T=\{t_i:i=1,\ldots,n\}$ with $0=t_1< t_2<\ldots<t_n<t_{n+1}=L$ and $|t_{i+1}-t_{i}|<\delta$,  the system $\{A^{t}g\}_{g\in \G,t\in T}$ is a frame for $\HH$.
	
	If, in addition, $A$ is  invertible, then $\{A^tg\}_{g\in\G,t\in[0,L]}$ is a semi-continuous frame for $\mathcal{H}$  if and only if there exists a finite set  $T=\{t_i:i=1,\ldots,n\}$ and $0= t_1< t_2<\ldots<t_n<L$,  such that $\{A^{t}g\}_{g\in \G, t\in T}$ is a frame for $\mathcal{H}.$
\end{theorem}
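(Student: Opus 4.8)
The plan is to prove the two statements in turn. For the first statement, I would start from the semi-continuous frame inequality $\sum_{g\in\G}\int_0^L|\la f,A^tg\ra|^2\,dt\geq c\|f\|^2$ and show that a sufficiently fine Riemann-type discretization of the time integral captures most of this mass. The key analytic ingredient is a uniform continuity estimate: for fixed $g\in\G$ and $f\in\HH$, the map $t\mapsto \la f,A^tg\ra$ is (by the arguments behind Lemma \ref{lemma 3.6}, restricted to real $t\in[0,L]$) continuous, and in fact one can bound $|\la f,A^{t}g\ra-\la f,A^{s}g\ra|$ in terms of $|t-s|$, $\|f\|$, and the quantities $\|g\|$, $\|A\|$. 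Summing over $g\in\G$ using the Bessel bound $C_\G$ of $\G$ (hence, via Proposition \ref{BesselEqu}, a Bessel bound for the continuous system), I would obtain a modulus-of-continuity estimate of the form $\sum_{g\in\G}\big|\,|\la f,A^tg\ra|^2-|\la f,A^sg\ra|^2\,\big|\le \omega(|t-s|)\|f\|^2$ with $\omega(\delta)\to 0$. Then, choosing the mesh $\delta$ small enough that $L\,\omega(\delta)<c/2$, a left-endpoint Riemann sum gives
\[
\sum_{i=1}^{n}(t_{i+1}-t_i)\sum_{g\in\G}|\la f,A^{t_i}g\ra|^2\;\ge\;\sum_{g\in\G}\int_0^L|\la f,A^tg\ra|^2\,dt-L\,\omega(\delta)\|f\|^2\;\ge\;\tfrac{c}{2}\|f\|^2.
\]
Since each coefficient $t_{i+1}-t_i$ is at most $\delta\le L$, this yields $\sum_{g\in\G,\,t\in T}|\la f,A^tg\ra|^2\ge \frac{c}{2L}\|f\|^2$, the lower frame bound; the upper bound for the finite discrete system is immediate from the Bessel property of $\G$ (finitely many translates of a Bessel system, again via Proposition \ref{BesselEqu} applied to each $A^{t_i}$, or directly). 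This proves the first assertion with this $\delta$.

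For the second statement, one direction is trivial: if $\{A^tg\}_{g\in\G,t\in T}$ is a frame for a finite set $T\subseteq[0,L)$, then for any $\varepsilon>0$ with $[t_i,t_i+\varepsilon]\subseteq[0,L]$ one can again use the modulus-of-continuity estimate above to transfer the lower bound at the sample point $t_i$ to a lower bound for the integral over a small interval around $t_i$, so $\sum_{g\in\G}\int_0^L|\la f,A^tg\ra|^2dt\ge \text{(const)}\cdot\varepsilon\cdot c\|f\|^2$; the upper bound is Proposition \ref{BesselEqu}. The nontrivial direction is to produce a finite $T$ contained in the \emph{half-open} interval $[0,L)$ (rather than a mesh using the endpoint $L$). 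The natural fix is to apply the first statement on the slightly larger interval $[0,L']$ — but $\{A^tg\}$ need not be a semi-continuous frame on $[0,L']$ a priori. Here is where invertibility of $A$ enters: when $A$ is invertible normal, Theorem \ref{SCFrSA} (referenced in the introduction as showing the semi-continuous frame property is independent of $L$) guarantees that if $\{A^tg\}_{g\in\G,t\in[0,L]}$ is a semi-continuous frame then so is $\{A^tg\}_{g\in\G,t\in[0,L']}$ for every $L'>0$, in particular for some $L'>L$. Applying the first statement on $[0,L']$ with a mesh fine enough and then keeping only the sample points lying in $[0,L)$ — which still includes $t_1=0$ and enough points by choosing $L'$ close to $L$ and the mesh small — gives a finite $T\subseteq[0,L)$ for which $\{A^tg\}_{g\in\G,t\in T}$ is a frame.

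The main obstacle I anticipate is the half-open interval bookkeeping in the second statement: showing that after discretizing on $[0,L']$ one can discard the points in $[L,L']$ and still retain a lower frame bound. This requires either that the retained mesh on $[0,L)$ is by itself fine enough (so the first statement's $\delta$-criterion, applied on $[0,L)$ viewed inside $[0,L']$, already certifies it) or a direct argument that the contribution of the sample points in $[L,L']$ is controlled by the Bessel bound and can be absorbed. A clean way around it is to simply invoke the first statement of the theorem on the interval $[0,L']$, extract the frame $\{A^tg\}_{g\in\G,t\in T'}$ with $T'=\{0=t_1<\dots<t_m<L'\}$ and uniform mesh $<\delta$, and note that $T:=T'\cap[0,L)$ is nonempty (contains $0$) and, because a frame restricted to fewer indices can only lose the lower bound, one must instead run the first-statement discretization argument directly on $[0,L)$ using the semi-continuous frame bound on $[0,L']$ restricted to $[0,L)$ — which is positive as long as $L'$ is chosen so that $\sum_{g\in\G}\int_0^{L}|\la f,A^tg\ra|^2dt$ already dominates $c'\|f\|^2$, guaranteed since the original hypothesis is exactly a lower bound on $[0,L]$. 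I would present this last point carefully, since it is the one place where the argument is not a routine repetition of the first part. The remaining estimates (the modulus of continuity of $t\mapsto A^tg$, the Riemann-sum comparison, and the Bessel upper bounds) are all straightforward given Lemma \ref{lemma 3.6}, Proposition \ref{BesselEqu}, and Lemma \ref{boundedT}-type spectral calculus.
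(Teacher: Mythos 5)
Your overall strategy for the first statement (a Riemann-sum comparison against the time integral, controlled by a continuity estimate for $t\mapsto A^t$ and the Bessel bound of $\G$) is the same as the paper's, but as written it has a genuine gap: the uniform modulus-of-continuity estimate $\sum_{g\in\G}\bigl|\,|\la f,A^tg\ra|^2-|\la f,A^sg\ra|^2\,\bigr|\le\omega(|t-s|)\|f\|^2$ with $\omega(\delta)\to0$ is \emph{false} on all of $[0,L]$ for a general normal $A$. Via Cauchy--Schwarz and the Bessel bound it amounts to uniform norm-continuity of $t\mapsto A^{*t}$ on $[0,L]$, which fails whenever $\sigma(A)$ accumulates at $0$: for $A=\mathrm{diag}(e^{-n^2})$ on $\ell^2(\N)$ one has $\|A^{\delta}-A^{0}\|\ge\sup_n|1-e^{-n^2\delta}|=1$ for every $\delta>0$, and with $\G$ the standard basis and $f=e_n$, $n$ large, your $\omega(\delta)$ is bounded below by $1$. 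The first statement assumes only that $A$ is normal, so this estimate cannot be used near $t=0$ without further argument. The paper's proof circumvents exactly this point: its Lemma \ref{ContApower} gives uniform continuity of $t\mapsto A^t$ only on $[\ell,L]$ with $\ell>0$, while the contribution of $[0,l]$ to the integral is controlled separately by $\sum_{g\in\G}\int_0^l|\la f,A^tg\ra|^2dt\le K\int_0^l\|A\|^{2t}dt\,\|f\|^2<\epsilon\|f\|^2$ for $l$ small, and the sample points below $l/2$ are simply discarded from the lower-bound estimate. (One can in fact show that the hypotheses force $A$ to be bounded below, which would rescue your estimate, but you neither state nor prove this.)

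On the second statement, your concern about the half-open interval is a non-issue: in the first statement the partition satisfies $t_n<t_{n+1}=L$, so the sample set $T=\{t_1,\dots,t_n\}$ already lies in $[0,L)$ and the ``only if'' direction follows immediately. The proposed detour through $[0,L']$ and Theorem \ref{SCFrSA} must be deleted in any case, since the paper proves Theorem \ref{SCFrSA} \emph{using} Theorem \ref{ScToDscr}; invoking it here would be circular. For the ``if'' direction your continuity-transfer idea does work (there $A$ is invertible, so $t\mapsto A^t$ is uniformly continuous on all of $[0,L]$), though the paper argues more directly: it substitutes $t\mapsto t+t_i$ to write $\int_{t_i}^{t_{i+1}}|\la f,A^tg\ra|^2dt=\int_0^{t_{i+1}-t_i}|\la A^{*t}f,A^{t_i}g\ra|^2dt$ and then applies the discrete lower bound together with $\|A^{*t}f\|\ge\|A^{-1}\|^{-t}\|f\|$.
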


Example \ref {example3} shows that  the condition that $A$ is invertible is  necessary   for the second statement of  Theorem \ref{ScToDscr}.

The next theorem shows that, under some appropriate conditions, if $\{A^tg\}_{g\in\G,t\in[0,L_1]}$ is a semi-continuous frame for some finite positive number $L_1$, then $\{A^tg\}_{g\in\G,t\in[0,L]}$ is a semi-continuous frame for any finite positive number $L$.  
\begin{theorem}\label{SCFrSA}
	Let  $A\in\mathcal{B}(\HH)$ be an  invertible self-adjoint operator and $\G$ be a countable set in $\HH$. 
	Then, $\{A^tg\}_{g\in\G,t\in[0,1]}$ is a semi-continuous frame in $\HH$ if and only if $\{A^tg\}_{g\in\G,t\in[0,L]}$ is a semi-continuous frame in $\HH$ for all finite positive $L$.
\end{theorem}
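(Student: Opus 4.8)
The ``if'' direction is trivial: take $L=1$. For the ``only if'' direction, suppose $\{A^tg\}_{g\in\G,t\in[0,1]}$ is a semi-continuous frame and fix a finite $L>0$; set $Q_L(f):=\sum_{g\in\G}\int_0^L|\langle f,A^tg\rangle|^2\,dt$. The upper bound in \eqref{SCF} says that $\{A^tg\}_{g\in\G,t\in[0,1]}$ is Bessel, so by Theorem~\ref{MainBessel ID} the system $\G$ is Bessel, and then Proposition~\ref{BesselEqu} gives $Q_L(f)\le C_L\|f\|^2$ for every $L$. Hence the only thing left is a lower estimate $Q_L(f)\ge c_L\|f\|^2$, and for $L\ge 1$ this is immediate because $L\mapsto Q_L(f)$ is nondecreasing and $Q_1(f)\ge c\|f\|^2$.

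So the real content is the range $0<L<1$, which I would reach through the discretization theorem. Since $\G$ is Bessel and $A$ is invertible, Theorem~\ref{ScToDscr} provides a finite set $T_0=\{0=t_1<t_2<\dots<t_n<1\}$ such that $\{A^{t}g\}_{g\in\G,\,t\in T_0}$ is a frame for $\HH$. Pick $\lambda\in(0,1)$ small enough that $\lambda t_n<L$, so that $\lambda T_0=\{0=\lambda t_1<\dots<\lambda t_n\}\subset[0,L)$. The one nontrivial step is the claim that the \emph{rescaled} discrete system $\{A^{s}g\}_{g\in\G,\,s\in\lambda T_0}=\{A^{\lambda t}g\}_{g\in\G,\,t\in T_0}$ is again a frame for $\HH$. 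Granting this, the converse half of Theorem~\ref{ScToDscr} (available since $A$ is invertible) shows that $\{A^tg\}_{g\in\G,t\in[0,L]}$ is a semi-continuous frame, which finishes the proof.

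It remains to explain the rescaling claim, which is where self-adjointness is used. For $\lambda\in(0,1]$ one has the operator identity $A^{\lambda t}=(A^{\lambda})^{t}$ — indeed $z^{\lambda t}=(z^{\lambda})^{t}$ because $\arg(z^{\lambda})=\lambda\arg(z)$ when $\lambda\in(0,1]$ and $\arg(z)\in[-\pi,\pi)$ — so it suffices to compare the discrete frame properties of $\{A^{t}g\}_{g\in\G,t\in T_0}$ and $\{(A^{\lambda})^{t}g\}_{g\in\G,t\in T_0}$. A self-adjoint operator is reductive, so, as in the proof of Theorem~\ref{theorem 3.1}, being a frame for $\HH$ reduces, sheet by sheet, to a uniform (in $j$) frame condition on each spectral sheet $(L^2(\mu_j))^{(j)}$ appearing in \eqref{representation of normal}; on the $j$-th sheet the system is $\{z^{t}\tilde g_j(z)\}_{g\in\G,t\in T_0}$. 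For self-adjoint $A$ the spectral values $z$ are real, so each $z^{t}$ differs from the positive power $|z|^{t}$ only by the \emph{fixed} unimodular factor $e^{-i\pi t}$ carried by the negative part of $\sigma(A)$; consequently the dependence of the frame bounds on the finite exponent set enters only through generalized Vandermonde--type expressions $\det\!\big((z_k)^{t_i}\big)$, and since $\{t\mapsto z^{t}\}$ is a Chebyshev-type system on $[0,1)$ for real $z$, replacing each $t_i\in[0,1)$ by $\lambda t_i$ with $\lambda\in(0,1)$ keeps these expressions bounded away from $0$ and from $\infty$ uniformly over the spectrum; this preserves the frame bounds. Establishing this scale invariance of the discrete frame property (for $\lambda\le1$), with the reality of $\sigma(A)$ as the crucial ingredient, is the main obstacle; everything else is bookkeeping built on Theorems~\ref{MainBessel ID}, \ref{ScToDscr}, and the spectral machinery of Section~\ref{completeness section}.
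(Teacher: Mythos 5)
Your reductions at the start are all sound: the Bessel half follows from Theorem~\ref{MainBessel ID} and Proposition~\ref{BesselEqu}, and the lower bound for $L\ge 1$ is immediate by monotonicity. The problem is the ``rescaling claim.'' After the change of variables $t=Ls$ one has
\begin{equation*}
\sum_{g\in\G}\int_0^L|\la f,A^tg\ra|^2\,dt \;=\; L\sum_{g\in\G}\int_0^1|\la f,(A^L)^sg\ra|^2\,ds,
\end{equation*}
so $\{A^tg\}_{g\in\G,t\in[0,L]}$ is a semi-continuous frame if and only if $\{(A^L)^sg\}_{g\in\G,s\in[0,1]}$ is. Hence the invariance of the frame property under $A\mapsto A^{\lambda}$ (in either its continuous form or, via Theorem~\ref{ScToDscr}, its discrete form $\{A^{t}g\}_{t\in T_0}\mapsto\{(A^{\lambda})^{t}g\}_{t\in T_0}$) is not an auxiliary lemma: it \emph{is} Theorem~\ref{SCFrSA}. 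Your proposal reduces the theorem to an equivalent restatement of itself, and everything you call ``bookkeeping'' is correct but peripheral.

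The justification you sketch for that claim does not survive scrutiny. By the corollary to Lemma~\ref{DOPCase} the set $\G$ is necessarily infinite, and the systems in question are overcomplete frames for an infinite-dimensional space, so the frame bounds are not governed by finite Vandermonde-type determinants $\det\bigl((z_k)^{t_i}\bigr)$; moreover the lower frame bound is a uniform statement over a spectrum that typically accumulates (compare condition (iii) of Theorem~\ref{OnePointFrame}), which is exactly where ``bounded away from $0$ uniformly over the spectrum'' breaks down. A pointwise-in-$z$, sheet-by-sheet reduction is valid for completeness (Theorem~\ref{theorem 3.1}) but not for frame bounds. What actually closes the gap in the paper is a compactness argument: if the lower bound failed on $[0,L]$, one takes unit vectors $f_n$ with $\sum_{g\in\G}\int_0^L|\la f_n,A^tg\ra|^2dt\to 0$, extracts a subsequence with $\sum_{g\in\G}|\la f_n,A^tg\ra|^2\to 0$ for a.e.\ $t\in[0,L]$, complexifies $t$ after splitting $A$ into its positive and negative spectral parts and $\tilde g,\tilde f_n$ into real and imaginary parts (so that the relevant sums of \emph{squares}, not squared moduli, are analytic and uniformly bounded; Lemmas~\ref{SeriesConv}, \ref{AnalPosA}, \ref{ReImBessel}), and applies Montel's theorem: the locally uniform limit is analytic and vanishes on $[L/2,L]$, hence on $[L/2,1]$, contradicting the frame bound on $[0,1]$. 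Some analytic-continuation mechanism of this kind is unavoidable here, and it is precisely what is missing from your argument.
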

We postulate the following conjecture:
\begin {conjecture}  Theorem \ref {SCFrSA} remains true if $A$ is a normal reductive operator.
\end {conjecture}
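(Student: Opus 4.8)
The plan is to prove the sharper statement that the conclusion holds for every \emph{invertible} normal operator $A$, and to observe that the reductivity hypothesis is in fact dispensable in the argument below. The reverse implication is trivial (take $L=1$), so the content is to show that if $\{A^tg\}_{g\in\G,t\in[0,1]}$ is a semi-continuous frame, then $\{A^tg\}_{g\in\G,t\in[0,L]}$ is one for every finite $L>0$. I would split the equivalence into its upper and lower frame bounds. For the upper (Bessel) bound nothing happens across different $L$: by Theorem~\ref{MainBessel ID}, for invertible normal $A$ the system $\{A^tg\}_{g\in\G,t\in[0,L]}$ is Bessel if and only if $\G$ is Bessel, a condition independent of $L$. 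Hence the Bessel bound at $L=1$ forces $\G$ to be Bessel, and Proposition~\ref{BesselEqu} returns the Bessel bound at every $L$.

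For the lower bound, write $S_L$ for the semi-continuous frame operator, so that $\langle S_Lf,f\rangle=\sum_{g\in\G}\int_0^L|\langle f,A^tg\rangle|^2\,dt$. The elementary identity $\langle(S_L-S_{L'})f,f\rangle=\sum_{g}\int_{L'}^{L}|\langle f,A^tg\rangle|^2\,dt\ge 0$ shows that $L\mapsto S_L$ is nondecreasing in the operator order; consequently a lower bound on $[0,1]$ propagates verbatim to every $[0,L]$ with $L\ge 1$. The whole difficulty is therefore confined to producing, for each $0<L<1$, a lower bound on $[0,L]$ out of the one on $[0,1]$, i.e. to a reverse comparison $S_1\preceq C(L)\,S_L$. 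To obtain it I would pass to the spectral picture of Theorem~\ref{spectral theorem}: for fixed $f,g\in\HH$ set $u_{f,g}(t):=\langle f,A^tg\rangle=\int_{\sigma(A)}\overline{z^{t}}\,\langle\tilde f(z),\tilde g(z)\rangle\,d\mu(z)$, so that $\langle S_Lf,f\rangle=\sum_{g}\|u_{f,g}\|_{L^2[0,L]}^2$. The comparison $S_1\preceq C(L)S_L$ then follows termwise, and so after summation over $g$, from a uniform estimate on the individual functions $u_{f,g}$.

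Because $A$ is invertible, $\sigma(A)$ is a compact subset of the annulus $\{\|A^{-1}\|^{-1}\le|z|\le\|A\|\}$, so, after the substitution $\overline{z^t}=e^{t\,\overline{\ln z}}$, each $u_{f,g}$ is the Laplace transform $h(t)=\int_\Lambda e^{\lambda t}\,d\rho(\lambda)$ of the finite complex measure $\rho=\rho_{f,g}$ obtained by pushing $\langle\tilde f,\tilde g\rangle\,d\mu$ forward under $z\mapsto\overline{\ln z}$, whose frequencies $\lambda$ lie in the \emph{bounded} rectangle $\Lambda=[-\ln\|A^{-1}\|,\ln\|A\|]\times(-\pi,\pi]$. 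Thus each $u_{f,g}$ extends (as in Lemma~\ref{lemma 3.6}) to an entire function of exponential type $\le\tau:=\max_{\lambda\in\Lambda}|\lambda|<\infty$. The reduction is then complete once one establishes the single uniform estimate
\[
\int_0^1|h(t)|^2\,dt\ \le\ C(L,\Lambda)\int_0^L|h(t)|^2\,dt
\]
valid for \emph{all} $h(t)=\int_\Lambda e^{\lambda t}\,d\rho(\lambda)$ with one and the same constant: a reverse (Tur\'an--Nazarov type) inequality for exponential integrals whose frequencies are confined to a fixed bounded set.

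This uniform reverse inequality is the crux, and the step I expect to be the main obstacle, the difficulty being that the constant must not depend on $\rho$ (equivalently, not on the number or spread of the frequencies), so that no finite-dimensional or perturbation argument applies directly. I would argue by contradiction through a normal-families compactness, exactly in the spirit of the completeness proof. Were the inequality to fail, one would obtain $h_n$ of exponential type $\le\tau$ with $\int_0^1|h_n|^2=1$ and $\int_0^L|h_n|^2\to 0$. The technical heart is a \emph{local boundedness} lemma: for every compact $K\subset\CC$ there is $B_K$ with $\sup_{t\in K}|h(t)|\le B_K\big(\int_0^1|h|^2\big)^{1/2}$ for every entire $h$ of exponential type $\le\tau$, which I would deduce from standard Plancherel--P\'olya/Bernstein estimates for functions of exponential type (or directly via subharmonicity of $|h|^2$ combined with a spreading estimate enlarging the interval of integration). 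Granting it, $\{h_n\}$ is uniformly bounded on compacta, so by Montel's Theorem~\ref{Montel's theorem} a subsequence converges locally uniformly to an entire $h_\ast$ with $\int_0^1|h_\ast|^2=1$, whence $h_\ast\not\equiv 0$, yet $\int_0^L|h_\ast|^2=0$, forcing $h_\ast\equiv 0$ on $[0,L]$ and therefore, by analyticity, $h_\ast\equiv 0$ --- a contradiction. Combining the two frame bounds yields the equivalence for all $L>0$. I would close by noting that reductivity is never used in this route, so the conjecture in fact holds with ``normal reductive'' weakened to ``invertible normal,'' while invertibility is genuinely necessary, as Example~\ref{example1} shows and as the unboundedness of $\Lambda$ when $0\in\sigma(A)$ makes clear.
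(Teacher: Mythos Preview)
This statement is posed in the paper as an open \emph{conjecture}; no proof is offered there, so there is nothing to compare your argument against. Your proposal, however, has a genuine gap at its crux. The termwise reverse inequality
\[
\int_0^1|h(t)|^2\,dt\ \le\ C(L,\Lambda)\int_0^L|h(t)|^2\,dt,\qquad h(t)=\int_\Lambda e^{\lambda t}\,d\rho(\lambda),
\]
is \emph{false} for finite measures $\rho$ on a bounded set $\Lambda$ as soon as $\Lambda$ has an accumulation point $\lambda_0$. From $\lambda_n\to\lambda_0$ one recovers $t^ke^{\lambda_0 t}$ in the $L^2[0,1]$-closure of $\mathrm{span}\{e^{\lambda t}:\lambda\in\Lambda\}$ for every $k$, hence that span is dense in $L^2[0,1]$; the inequality would then extend by continuity to all of $L^2[0,1]$, where it fails for any nonzero function supported on $[L,1]$. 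In your setup the measures $\rho_{f,g}$ realize every $L^1(\mu)$ density (any $L^1$ function is a product of two $L^2$ functions), so whenever $\sigma(A)$ is infinite the termwise route collapses. Your ``local boundedness lemma'' is also false as stated: polynomials have exponential type $0$, yet the shifted Chebyshev polynomials $T_n(2t-1)$ have uniformly bounded $L^2[0,1]$ norm while $|T_n(3)|\to\infty$. Neither Plancherel--P\'olya (which controls pointwise values by $\|h\|_{L^2(\R)}$, not by $\|h\|_{L^2[0,1]}$) nor subharmonicity of $|h|^2$ (which goes the wrong direction) rescues this.

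It is instructive to contrast this with the paper's proof of the self-adjoint case, Theorem~\ref{SCFrSA}. That argument never attempts a termwise comparison; it applies Montel directly to the \emph{summed} functions $F_n(t)=\sum_{g\in\G}(\langle A^tg,f_n\rangle)^2$, which are analytic and, crucially, uniformly bounded on compacta precisely because $\|f_n\|=1$ and $\G$ is Bessel. The decomposition $\sum_g|\langle f_n,A^tg\rangle|^2=F_n(t)+G_n(t)$ into analytic pieces relies on $\sigma(A)\subset\R$ (through $A_\pm$ and real/imaginary parts), and it is exactly the absence of such a decomposition for general normal $A$ that leaves the statement a conjecture. Your reduction to individual $u_{f,g}$ discards the one source of uniform control---the Bessel bound on $\G$ acting on the normalized $f_n$---that makes the self-adjoint argument work.
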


This first example shows that  the converse of  Proposition \ref{FrLrBd} is false.
\begin{example}\label{FrLrBdEx} 
	Let $A=\begin{bmatrix}
	\epsilon& 0\\
	0&1\\
	\end{bmatrix}$ with $0<\epsilon<1$ and $g=\begin{bmatrix}
	1\\
	1\\
	\end{bmatrix}$. 
	
	Note that for $L>0$, $$\G_1=\left\{g=\begin{bmatrix}
	1\\
	1\\
	\end{bmatrix}, A^{L/2}g=\begin{bmatrix}
	\epsilon^{L/2}\\
	1\\
	\end{bmatrix}\right\}$$ is complete in $\mathbb{R}^2$. In addition, $A$ is a bounded invertible normal operator in $\mathbb{R}^2$. Therefore, $\G_1$ is a frame in $\mathbb{R}^2$. By Theorem \ref{ScToDscr}, $\{A^tg\}_{t\in[0,L]}$ is a semi-continuous frame in $\mathbb{R}^2$. 	However, the lower bound of \eqref{equationlowerdis} does  not hold for $\G=\{g\}$. For example, let  $f=\begin{bmatrix} 
	-1\\
	1\\
	\end{bmatrix}$, then $ \langle f,g\rangle=0$.  
\end{example}
This next example shows that the condition that $A$ is invertible is required for the second statement of  Theorem \ref{ScToDscr}. 
\begin{example}\label{example3}
	Let $\G=\{e_j\}_{j=1}^{\infty}$ be the standard basis of $\ell^2(\mathbb{N})$. Because $\G$ is an orthonormal basis,  one has   $\G\subseteq \{A^{t}g\}_{g\in \G, t\in T}$, for any bounded operator $A$, and for any  time steps $T=\{t_i:i=1,\ldots,n\}$ with $0= t_1< t_2<\ldots<t_n<L$. Thus, $\G\subseteq \{A^{t}g\}_{g\in \G,t\in T}$ is a frame for $\ell^2(\mathbb{N}).$ 
	
	However, there exists a non-trivial bounded operator such that $\{A^te_j\}_{j\in\mathbb{N},t\in[0,L]}$ is not a semi-continuous frame. For example, if $D$ is a diagonal infinite matrix with diagonal entries $D_{j,j}=\frac{1}{j}$, then
	\begin{equation}
	\sum_{j=1}^{\infty}\int_{0}^{L}|\langle e_k,D^te_j\rangle|^2dt=\frac{1/k^{2L}-1}{\ln(1/k^{2})}.
	\end{equation}
	Since  
	$\lim\limits_{k\to\infty}\frac{1/k^{2L}-1}{\ln(1/k^{2})}=0, $
	it follows that $\{D^te_j\}_{j\in\mathbb{N},t\in[0,L]}$ is not a semi-continuous frame for $\ell^2(\mathbb{N})$.
\end{example}

Additionally, a number of examples are available to illustrate that $\{A^tg\}_{g\in\G,t\in[0,L]}$  is a semi-continuous frame for $\HH$ does not require $\G$ to be a frame  or even complete in $\HH$.  In fact, this is precisely why space-time sampling trade-off is feasible. The next two examples are toy examples to show this fact.
\begin{example} [$\G$ is not a frame for $\HH$]
	Let $\{e_n\}_{n=1}^{\infty}$ be the standard basis of $\ell^2(\mathbb{N})$ and $\G=\{g_n=e_n+e_{n+1}:n\in\mathbb{N}\}$, and let $D$ be a diagonal operator with 
	$D_{n,n}=	\begin{cases}
	1, n \text{ is odd}\\
	3, n\text{ is even}
	\end{cases}.$  \\
	
	It can be shown that $\G$ is complete  but that $\G$ is neither a basis nor a frame for $\ell^2(\N)$  \cite{C08}.  However, for all $f\in\ell^2(\mathbb{N})$, after a somewhat tedious computation, one gets
	\[\frac{1}{2}\|f\|^2\leq\sum_{n=1}^{\infty}\int_{0}^{1}|\langle f,D^tg_n\rangle|^2dt\leq\frac{16}{\ln(3)}\|f\|^2, \] so that $\{D^tg_n\}_{n\in\N,t\in[0,1]}$ is a semi-continuous frame for $\ell^2(\N)$.
\end{example}
\begin{example} [$\G$ is not complete in  $\HH$]
	Let $\{e_n\}_{n=1}^{\infty}$ be the standard basis of $\ell^2(\mathbb{N})$ and $\G=\{g_n=e_n+2e_{n+1}:n\in\mathbb{N}\}$.   The set $\G$ is not complete in $\ell^2(\mathbb{N})$. For example $f=(f_k)$ with $f_k=(-1)^k\frac 1 {2^k}$ is orthogonal to $\overline {\text {span } }\G$. Thus,  $\G$ is not a frame in $\ell^2(\N)$.  Let $D$  be the diagonal operator with 
	$$D_{n,n}=	\begin{cases}
	9, \quad \quad n=1\\
	1-\frac{1}{n},  n\geq 2
	\end{cases}.$$  
	A lengthy computation yields
	\begin{eqnarray*}
		\frac{1}{4}\|f\|^2\leq\sum_{n=1}^{\infty}|\langle f,g_n\rangle|^2+\sum_{n=1}^{\infty}|\langle f,Dg_n\rangle|^2\leq 164\|f\|^2.
	\end{eqnarray*}
	This implies that $\{D^tg\}_{g\in\G,t\in\{0,1\}}$ is a frame in $\ell^2(\mathbb{N})$. In addition, since $D$ is a self-adjoint  invertible  operator, Theorem \ref{ScToDscr} implies that
	$\{D^{t}g_n\}_{n\in\mathbb{N},t\in[0,2]}$ is a semi-continuous frame of $\ell^2(\mathbb{N})$.
\end{example}

\subsection {Proofs of Section \ref {frames}}
\label {ProofFrame}
\begin{proof}[\textbf{Proof of Lemma \ref {DOPCase}}]
	One can always assume that $A=\sum\limits_{i=1}^\infty\lambda_iP_i$ with $\rank (P_i)=1$, $P_iP_j=0$ and $\sum_iP_i=Id_{\ell^2(\N)}$ as long as  $\lambda_i=\lambda_j$ for $i\ne j$ in the representation of $A$ is allowed. 
	Let $e_i$ be a vector such that $\|e_i\|=1$ and $span \{e_i\}=P_i(\HH)$. Then 
	\begin{eqnarray*}
		\sum_{g\in\G}\int_{0}^{L}|\langle e_i,A^tg\rangle|_2^2dt&=&\sum_{g\in\G}\int_{0}^{L}|\lambda_i|^{2t}\vert\langle e_i,P_i(g)\rangle \vert^2 dt.		
	\end{eqnarray*}		
	Since  $\{A^tg\}_{g\in\G,t\in[0,L]}$  satisfies \eqref {equationlowercont}, we have that $\lambda_i\neq 0$ for all $i \in \N$.
	Moreover,  if $\sum_{g\in\G}\|g\|^2_2=\sum_{i\in\mathbb{N}}\sum_{g\in\G}\|P_ig\|^2<\infty$, then $\lim\limits_{i\to\infty}\sum_{g\in\G}\|P_ig\|^2=0.$ 
	In addition, since $ \frac{\|A\|^{2L}-1}{2\ln(\|A\|)}\geq\frac{|\lambda_i|^{2L}-1}{2\ln(|\lambda_i|)}>0,$ we get that  $ \lim\limits_{i\to \infty}\frac{|\lambda_i|^{2L}-1}{2\ln(|\lambda_i|)}\sum_{g\in\G}\|P_ig\|^2= 0. $ 
	This contradicts \eqref {equationlowercont}. Hence, $\sum_{g\in\G}\|g\|^2=\infty$. 
\end{proof}

\begin{proof}[\textbf{Proof of theorem \ref{ScToDscr}}]
	From the assumption that $\G$ is a Bessel sequence in $\HH$,  there exists $K>0$ such that $\sum_{g\in\G}|\langle f,g\rangle|^2\leq K\|f\|^2,$ for all $f\in\HH$. Since $A$ is a bounded normal operator, for any  $0\leq t<\infty$, one has
	\begin{equation}\label{AtgBessel}
	\sum_{g\in\G}|\langle f,A^{t}g\rangle|^2=\sum_{g\in\G}|\langle A^{*t}f,g\rangle|^2\leq K\|A^{*t}f\|^2\leq K\|A\|^{2 t}\|f\|^2.	\end{equation}
	Summing the  inequalities \eqref {AtgBessel}  over  $t\in T=\{t_i:i=1,\dots, n\}$, it immediately follows that $\{A^{t}g\}_{g\in\G,t\in T}$ is a Bessel sequence in $\HH$.
	
	Using \eqref {AtgBessel}, it follows that
	\begin{equation}\label{equation 25}
	\sum_{g\in\G}\int_{0}^{L}|\langle f,A^tg\rangle|^2dt\leq K\int_{0}^{L}\|A\|^{2t}dt\|f\|^2.
	\end{equation} 
	Inequality \eqref{equation 25} implies that
	for any $\epsilon>0$, there exists an $l$ with  $L/2>l>0$, such that 
	\begin {equation}\label {epsIflSmall}
	\sum_{g\in\G}\int_{0}^{l}|\langle f,A^tg\rangle|^2dt<\epsilon\|f\|^2.
\end{equation}

Next, the goal is to find $\delta>0$ such that for any finite set $T=\{t_i:i=1,\ldots,n\}$ with $0=t_1< t_2<\ldots<t_n<t_{n+1}=L$ and $|t_{i+1}-t_{i}|<\delta$,  the system $\{A^{t}g\}_{g\in \G,t\in T}$ is a frame for $\HH$, as long as  $\{A^tg \}_{g\in\G,t\in[0,L]}$ is a semi-continuous frame for $\mathcal{H}$, i.e.,  
\begin{equation}\label{equation 24}
c\|f\|^2\leq\sum_{g\in\G}\int_{0}^{L}|\langle f,A^tg\rangle |^2dt\leq C\|f\|^2, \quad \text {for all } f \in \HH,
\end{equation} 
for some $c, C>0$.

To finish the proof,  we use the following lemma.

\begin{lemma}\label{ContApower}
	Let $A\in \mathcal{B}(\HH)$ be a normal operator and $\ell,L$ be positive numbers with $0<\ell<L$. Then for any $\epsilon>0$, there exists $\delta>0$ such that whenever $s_1,s_2\in[\ell,L]$ with $|s_1-s_2|<\delta$, we have $\|A^{s_1}-A^{s_2}\|<\epsilon$.
\end{lemma}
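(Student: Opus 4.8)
The plan is to reduce the operator-norm statement to a scalar estimate through the spectral theorem, and then to exploit the hypothesis $\ell>0$ to obtain a Lipschitz bound in the exponent $s$ that is uniform over the spectrum.

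First I would pass to the multiplication model: by the Spectral Theorem (Theorem~\ref{spectral theorem}) together with the definition of $A^{t}$, the operator $A^{s_1}-A^{s_2}$ is unitarily equivalent, via $U$, to multiplication by the bounded Borel function $z\mapsto z^{s_1}-z^{s_2}$ on $\mathcal{W}$. Hence $\norm{A^{s_1}-A^{s_2}}$ equals the essential supremum of $\abs{z^{s_1}-z^{s_2}}$ with respect to the scalar-valued spectral measure $\mu$ of $A$, and since the support of $\mu$ equals $\sigma(A)\subseteq\{z:\abs{z}\le\norm{A}\}$,
\[
\norm{A^{s_1}-A^{s_2}}\ \le\ \sup_{\abs{z}\le\norm{A}}\abs{z^{s_1}-z^{s_2}}.
\]
So it suffices to bound $\abs{z^{s_1}-z^{s_2}}$ by a constant multiple of $\abs{s_1-s_2}$, uniformly over $z$ in the closed disk of radius $\norm{A}$ and over $s_1,s_2\in[\ell,L]$.

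For fixed $z\neq 0$, recall $z^{s}=\exp\!\bigl(s(\ln\abs{z}+i\arg z)\bigr)$; as a function of the real variable $s$ this is smooth, with
\[
\Bigl\lvert\frac{d}{ds}z^{s}\Bigr\rvert=\bigl\lvert\ln\abs{z}+i\arg z\bigr\rvert\,\abs{z}^{s}\ \le\ \bigl(\abs{\ln\abs{z}}+\pi\bigr)\abs{z}^{s}.
\]
The key claim is that $K:=\sup\bigl\{\,(\abs{\ln\abs{z}}+\pi)\,\abs{z}^{s}\ :\ 0<\abs{z}\le\norm{A},\ \ell\le s\le L\,\bigr\}$ is finite. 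Indeed, writing $\rho=\abs{z}$: on $0<\rho\le 1$ one has $\rho^{s}\le\rho^{\ell}$, and $\rho\mapsto(\abs{\ln\rho}+\pi)\rho^{\ell}$ extends continuously to $[0,1]$ with value $0$ at the origin, since $\rho^{\ell}\ln\rho\to 0$ as $\rho\to0^{+}$ because $\ell>0$; hence it is bounded on $[0,1]$. On $1\le\rho\le\norm{A}$, nonempty only when $\norm{A}\ge 1$, the quantity is at most $(\ln\norm{A}+\pi)\norm{A}^{L}$. Taking the larger of these two bounds shows $K<\infty$. Since $z^{s_1}=z^{s_2}=0$ when $z=0$, it follows that for every $z$ with $\abs{z}\le\norm{A}$ and all $s_1,s_2\in[\ell,L]$,
\[
\abs{z^{s_1}-z^{s_2}}=\Bigl\lvert\int_{s_2}^{s_1}\frac{d}{ds}z^{s}\,ds\Bigr\rvert\le K\,\abs{s_1-s_2},
\]
so $\norm{A^{s_1}-A^{s_2}}\le K\,\abs{s_1-s_2}$ by the reduction above, and $\delta=\epsilon/(K+1)$ works.

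The one point requiring care is the behavior near $z=0$, where $(z,s)\mapsto z^{s}$ fails to be jointly continuous, so a naive appeal to uniform continuity on a compact set is unavailable; this is exactly where the hypothesis $\ell>0$ is essential, entering the argument only through the elementary limit $\rho^{\ell}\ln\rho\to 0$ that keeps $K$ finite. (The jump of $\arg$ across the negative real axis causes no difficulty, since in the displays above $z$ is held fixed while the differentiation is carried out in the variable $s$.)
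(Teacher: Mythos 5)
Your proof is correct. The overall skeleton matches the paper's: both arguments use the spectral theorem to reduce $\norm{A^{s_1}-A^{s_2}}$ to the scalar quantity $\sup_{z\in\sigma(A)}\abs{z^{s_1}-z^{s_2}}$, and in both the hypothesis $\ell>0$ is what tames the degeneracy of $z^{s}$ near $z=0$. Where you differ is in how the scalar estimate is carried out. The paper expands
\[
\abs{z^{s_1}-z^{s_2}}^2=\bigl\lvert\,\abs{z}^{s_1}-\abs{z}^{s_2}\bigr\rvert^2+2\abs{z}^{s_1}\abs{z}^{s_2}\bigl(1-\cos((s_1-s_2)\arg z)\bigr),
\]
treating modulus and phase separately: the first term is handled by the uniform continuity of $(t,r)\mapsto r^{t}$ on the compact set $[\ell,L]\times[0,\norm{A}]$ (where joint continuity at $r=0$ is exactly where $\ell>0$ enters, implicitly), and the second by the equicontinuity of $t\mapsto\cos(t\arg z)$ at $t=0$. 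You instead differentiate $z^{s}$ in the real variable $s$ and integrate, reducing everything to the finiteness of $K=\sup(\abs{\ln\abs{z}}+\pi)\abs{z}^{s}$, which you justify by the elementary limit $\rho^{\ell}\ln\rho\to0$. Your route buys a quantitative Lipschitz bound $\norm{A^{s_1}-A^{s_2}}\le K\abs{s_1-s_2}$ rather than mere uniform continuity, and it makes the role of $\ell>0$ fully explicit; the paper's route avoids differentiation and rests only on compactness. Both are sound, and your handling of the two genuine pitfalls (the point $z=0$ and the discontinuity of $\arg$ on the negative real axis) is careful and correct.
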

\begin{proof}[\textbf{Proof of Lemma \ref{ContApower}}] 
	For $s_1,s_2\in[\ell,L]$,
	\begin{eqnarray*}
		|z^{s_1}-z^{s_2}|^2&=&|z|^{2s_1}-2|z|^{s_1}|z|^{s_2}\cos((s_1-s_2)arg(z))+|z|^{2s_2}\\
		&=&||z|^{s_1}-|z|^{s_2}|^2+2|z|^{s_1}|z|^{s_2}(1-\cos((s_1-s_2)arg(z))).
	\end{eqnarray*}
	For all $z\in\sigma(A)$, one has $0\leq|z|\leq\|A\|$.  Thus $|z|^{s}$ is uniformly bounded for all $s\in[\ell,L].$ In addition, the function $(t,r)\mapsto r^t$ is a continuous function on the compact set $ [ \ell,L]\times[0,\|A\|]$ and the function $t\mapsto\cos(t\cdot arg(z))$ is equicontinuous at $t=0$ for $arg(z)\in[-\pi,\pi)$. The lemma then follows from the spectral theorem (i.e., Theorem \ref {spectral theorem}).
	
\end{proof}
By Lemma \ref{ContApower}, there exists $\delta$ with $l/2>\delta>0$ such that  whenever $|s_1-s_2|<2\cdot\delta$ for $s_1,s_2\in[l/2,L]$, then $\|A^{s_1}-A^{s_2}\|<\epsilon$. Assume  that the set $T=\{t_i:i=1,\ldots,n\}$ satisfies $0=t_1< t_2<\ldots<t_n<t_{n+1}=L$ and $|t_{i+1}-t_{i}|<\delta$. Set $m=\min\{i:t_i>l/2\}$. Note that $l/2>\delta>0$. Therefore $t_m<l$. Then, using \eqref {epsIflSmall},  the difference 

%	By Lemma \ref{ContApower}, there exists $N\in\mathbb{N}$ such that  whenever $|s_1-s_2|<2\cdot\frac{L-l}{N}$ for $s_1,s_2\in[l,L]$, then $\|A^{s_1}-A^{s_2}\|<\epsilon$. Set $t_i=l+\frac{(i-1)(L-l)}{N}$. Then, using \eqref {epsIflSmall},  the difference 
\begin {equation}\label {DiffContDisc}
\Delta=\left|{\sum_{g\in\G}\int_{0}^{L}|\langle f,A^tg\rangle|^2dt-\sum_{g\in\G}\sum_{i=m}^{n}\int_{t_i}^{t_{i+1}}\ |\langle f,A^{t_i}g\rangle|^2}dt\right|,
\end {equation}
can be estimated  as follows.	
\begin{eqnarray*}
	\Delta
	&=&\left|{\sum_{g\in\G}\int_{0}^{L}|\langle f,A^tg\rangle|^2dt-\sum_{g\in\G}\sum_{i=m}^{n}\int_{t_i}^{t_{i+1}}\ |\langle f,A^{t_i}g\rangle|^2}dt\right| \\
	&\leq&\left(\sum_{g\in\G}\int_{0}^{t_m}|\langle f,A^tg\rangle|^2dt\right)+\sum_{i=m}^{n}\int_{t_i}^{t_{i+1}}\sum_{g\in\G}\vert {|\langle f,A^tg\rangle|^2-|\langle f,A^{t_i}g\rangle|^2}  \vert dt\\
	&=&\left(\int_{0}^{t_m}\sum_{g\in\G}|\langle f,A^tg\rangle|^2dt\right)+\sum_{i=m}^{n}\int_{t_i}^{t_{i+1}}\sum_{g\in\G}(|\langle f,A^tg\rangle|+|\langle f,A^{t_i}g\rangle|)(\vert |\langle f,A^tg\rangle|-|\langle f,A^{t_i}g\rangle| \vert )dt\\
	&\leq&\epsilon \|f\|^2+\sum_{i=m}^{n}\int_{t_i}^{t_{i+1}}\sum_{g\in\G}(|\langle A^{*t}f,g\rangle|+|\langle A^{*t_i}f,g\rangle|)(|\langle A^{*t}f-A^{*t_i}f,g\rangle|)dt\\
	&\leq&\epsilon\|f\|^2+\sum_{i=m}^{n}\int_{t_i}^{t_{i+1}}\left(\sum_{g\in\G}(|\langle A^{*t}f,g\rangle|+|\langle A^{*t_i}f,g\rangle|)^2\right)^{1/2}\left(\sum_{g\in\G}(|\langle A^{*t}f-A^{*t_i}f,g\rangle|)^2\right)^{1/2}dt\\
	&\leq&\epsilon\|f\|^2+\sum_{i=m}^{n}\int_{t_i}^{t_{i+1}}\left(2K(\|A^{*t}f\|^2+\|A^{*t_i}f\|^2)\right)^{1/2}(K\|A^{*t}f-A^{*t_i}f\|^2)^{1/2}dt\\
	&\leq&(\epsilon+2C_1 KL\epsilon)\|f\|^2,\text{ where }C_1=\max\{1,\|A\|^{L}\}.
\end{eqnarray*}
Using \eqref {DiffContDisc} and  choosing $\epsilon$ so small that  $(1+2C_1 KL)\epsilon<c/2$, we find $\delta$ such that 
$$\delta\sum_{g\in\G}\sum_{i=m}^{n}|\langle f,A^{t_i}g\rangle|^2\geq c\|f\|^2-c/2\|f\|^2=c/2\|f\|^2.$$ Therefore,  for any finite set $T=\{t_i:i=1,\ldots,n\}$ with $0=t_1< t_2<\ldots<t_n<t_{n+1}=L$ and $|t_{i+1}-t_{i}|<\delta$, the system  $\{A^{t}g\}_{g\in\G,t\in T}$ is a frame in $\HH$.

To prove the second statement,  
it is sufficient  to prove  that  $\{A^tg\}_{g\in\G,t\in[0,L]}$ is a semi-continuous frame under the assumption that $\{A^{t}g\}_{g\in\G,t\in T}$ is a frame in $\HH$ and  $A$ is an invertible normal operator. We already know by Theorem \ref{MainBessel ID} that $\{A^tg\}_{g\in\G,t\in[0,L]}$ is Bessel since $\G$ is Bessel by assumption. Let $T=\{t_i:i=1,\ldots,n\}$ with  $0=t_1<t_2<\ldots<t_n<L$ be such that $\{A^{t}g\}_{g\in\G,t\in T}$ is a frame for $\HH$ with frame constants $c,C$  i.e., for all $ f\in\HH$, 
\[c\|f\|^2\leq\sum_{g\in\G}\sum_{i=1}^{n}|\langle f,A^{t_i}g\rangle|\leq C\|f\|^2. \]  
Let $m=\min\{t_{i+1}-t_{i},1\leq i\leq n\}$ with $t_{n+1}=L$. Then, 
\begin{eqnarray*}
	\sum_{g\in\G}\int_{0}^{L}|\langle f,A^tg\rangle|^2dt&=&\sum_{g\in\G}\sum_{i=1}^{n}\int_{t_i}^{t_{i+1}}|\langle f,A^tg\rangle|^2dt\\
	&=&\sum_{g\in\G}\sum_{i=1}^{n}\int_{0}^{t_{i+1}-t_{i}}|\langle (A^{*t}f,A^{t_i}g\rangle|^2dt\\
	&\geq&\sum_{g\in\G}\sum_{i=1}^{n}\int_{0}^{m}|\langle A^{*t}f,A^{t_i}g\rangle|^2dt\\
	&\geq&\int_{0}^{m}c\|A^{*t}f\|^2_2dt.
\end{eqnarray*}
Since $A$ is an invertible bounded normal operator, we have % by Lemma \ref{SpecInvNorm},
\begin{eqnarray*}
	\int_{0}^{m}c\|A^{*t}f\|^2_2dt&\geq&c\cdot \frac{1-\|A^{-1}\|^{-2m}}{2\ln(\|A^{-1}\|)}\|f\|^2.
\end{eqnarray*}
This concludes the proof that  $\{A^tg\}_{g\in\G,t\in[0,L]}$ is a semi-continuous frame for $\HH$. 
\end{proof}
To prove Theorem \ref{SCFrSA}, the following  three lemmas, i.e., Lemmas \ref{SeriesConv}, \ref{AnalPosA} and \ref{ReImBessel} are needed.
\begin{lemma}\label{SeriesConv}
	Let  $\G$ be a countable Bessel sequence in $\HH$ and let $A\in\mathcal{B}(\HH)$ be a normal operator. Let  $L$ be any positive real number,   $\Omega_L=\{z:\Re(z)>L>0\}$, and let $\{g_i\}_{i\in I}$ be any indexing of $\G$.  Then,  for fixed $f\in\HH$,
	the partial sums  $\sum\limits_{i=1}^n|\langle A^zg_i,f\rangle|^2$ converge uniformly on any compact subset of $ \Omega_L$.
\end{lemma}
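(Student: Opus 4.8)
The plan is to realize $S(z):=\sum_{i\in I}\abs{\la A^z g_i,f\ra}^2$ as the squared $\ell^2$-norm of the image of a single $\HH$-valued path under the analysis operator of $\G$, and then to appeal to Dini's theorem. First I would write $\la A^z g_i,f\ra=\overline{\la (A^z)^*f,g_i\ra}$; since $\G$ is Bessel, its analysis operator $T_\G\colon\HH\to\ell^2(I)$, $T_\G x=(\la x,g_i\ra)_{i\in I}$, is bounded with $\norm{T_\G}^2\le C_\G$, so that
\[S(z)=\sum_{i\in I}\abs{\la (A^z)^*f,g_i\ra}^2=\norm{T_\G\,(A^z)^*f}_{\ell^2(I)}^2,\qquad z\in\Omega_L,\]
and $S_n(z)=\sum_{i=1}^{n}\abs{\la A^z g_i,f\ra}^2$ is the squared norm of the truncation of $T_\G(A^z)^*f$ to the first $n$ coordinates. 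For $\Re(z)>0$ the operator $A^z$, and hence $(A^z)^*$, is bounded, its spectral symbol $\overline{w^z}$ being bounded on $\sigma(A)$, so $S(z)\le C_\G\norm{(A^z)^*f}^2<\infty$.

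The heart of the argument, and the step I expect to be the main obstacle, is to show that $z\mapsto (A^z)^*f$ is a \emph{norm-continuous} map $\Omega_L\to\HH$. Via the spectral representation of $A$ (Theorem \ref{spectral theorem}), $(A^z)^*f$ is represented by the field $w\mapsto\overline{w^z}\,\tilde f(w)$, so that
\[\norm{(A^{z_1})^*f-(A^{z_2})^*f}^2=\int_{\sigma(A)}\bigl|w^{z_1}-w^{z_2}\bigr|^2\,\norm{\tilde f(w)}^2\,d\mu(w).\]
Fix $z_0\in\Omega_L$ and a compact neighbourhood $K\subset\Omega_L$ of $z_0$; on $K$ one has $\Re(z)\in[a,b]$ with $a>0$ and $\abs{\Im(z)}\le c$, while $\sigma(A)\subseteq\{w:\abs{w}\le\norm A\}$ with $\arg(w)\in[-\pi,\pi)$. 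Because $\abs{w^z}=\abs{w}^{\Re(z)}e^{-\Im(z)\arg(w)}$, these bounds yield a constant $C_K<\infty$ with $\abs{w^z}\le C_K$ for all $w\in\sigma(A)$ and $z\in K$; hence the integrand above is dominated by $4C_K^2\norm{\tilde f(w)}^2\in L^1(\mu)$ uniformly for $z_1,z_2\in K$, and since $w^{z_2}\to w^{z_1}$ pointwise, the dominated convergence theorem gives continuity at $z_0$. The same estimate applied to a single term shows, just as in the proof of Lemma \ref{lemma 3.6}, that each $z\mapsto\la A^z g_i,f\ra$ is continuous (in fact analytic) on $\Omega_L$, so every $S_n$ is continuous there.

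To conclude, compose: $T_\G$ bounded and $z\mapsto(A^z)^*f$ continuous make $z\mapsto T_\G(A^z)^*f$ continuous from $\Omega_L$ into $\ell^2(I)$, whence $S(z)=\norm{T_\G(A^z)^*f}^2$ is continuous on $\Omega_L$. Fix now a compact $K\subset\Omega_L$. The $S_n$ are continuous on $K$, the sequence $(S_n)$ is pointwise nondecreasing (its increments $\abs{\la A^z g_n,f\ra}^2$ are nonnegative), and it converges pointwise to the continuous limit $S$; by Dini's theorem the convergence is uniform on $K$, which is exactly the claim. Should one wish to avoid Dini, the same continuity makes $\{(A^z)^*f:z\in K\}$ compact in $\HH$, hence covered by finitely many $\varepsilon$-balls about points $(A^{z_j})^*f$; then for any $z\in K$ the tail $\sum_{i>n}\abs{\la (A^z)^*f,g_i\ra}^2$ is at most $2C_\G\varepsilon^2+\max_j\sum_{i>n}\abs{\la (A^{z_j})^*f,g_i\ra}^2$, and the finitely many Bessel tails on the right are $<\varepsilon^2$ for all large $n$, giving the uniform estimate directly. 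In either route the only place the hypotheses $\Re(z)>L>0$ and $\norm A<\infty$ enter is the domination bound $\abs{w^z}\le C_K$; the rest is soft.
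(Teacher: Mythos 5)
Your proof is correct, and it takes a genuinely different --- and in fact more careful --- route than the paper's. The paper's entire argument for this lemma is the single estimate $\sum_{i=1}^n|\langle A^zg_i,f\rangle|^2=\sum_{i=1}^n|\langle (A^z)^*f,g_i\rangle|^2\le C_\G\, e^{2\pi r}\|A\|^{2r}\|f\|^2$ for $z\in\overline{D_r}\cap\overline{\Omega_L}$, followed by ``from which the lemma follows.'' Taken literally, that only establishes that the partial sums are \emph{uniformly bounded} on compacta, which by itself does not give uniform convergence of the series; the missing ingredient is exactly what you supply. Your dominated-convergence argument in the spectral representation, with the bound $|w^z|=|w|^{\Re(z)}e^{-\Im(z)\arg(w)}\le C_K$ for $w\in\sigma(A)$ and $z$ in a compact $K\subset\Omega_L$, correctly shows that $z\mapsto (A^z)^*f$ is norm-continuous, hence $S(z)=\|T_\G (A^z)^*f\|^2$ is continuous and Dini's theorem applies to the monotone sequence $S_n$; your alternative via norm-compactness of $\{(A^z)^*f:z\in K\}$ and a finite $\varepsilon$-net also works (modulo the harmless factor of $2$ you drop on the second term when using $(a+b)^2\le 2a^2+2b^2$ in the tail estimate). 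In short, the paper buys brevity by leaving the actual passage from a uniform bound to uniform convergence implicit, while your argument makes the lemma airtight using the same basic estimates plus the continuity of the operator-valued path and Dini (or total boundedness).
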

\begin{proof}[\textbf{Proof of Lemma \ref{SeriesConv}}]
	Let $\overline {D_r}$ denote the closed disk of radius $r$. Then using the fact that $\G$ is Bessel with Bessel constant $ C_\G$, for $z \in  \overline {D_r}\cap \overline {\Omega_L}$, one gets, 
	
	$$\sum\limits_{i=1}^n|\langle A^zg_i,f\rangle|^2=\sum\limits_{i=1}^n|\langle f,A^zg_i\rangle|^2=\sum\limits_{i=1}^n |\langle (A^{z})^*f,g\rangle|^2\le C_\G \cdot e^{2\pi r}\cdot\|A\|^{2r}\|f\|^2,$$
	from which the lemma follows.
\end{proof}
\begin{lemma}\label{AnalPosA}
	Let  $\G$ be a countable Bessel sequence in $\HH$ and let $A\in\mathcal{B}(\HH)$ be a  normal  operator. Let $L$ be any positive real number and  let $\Omega_L=\{z:\Re(z)>L>0\}$. Then, for  fixed $f\in\HH$,
	$$F(z)=\sum_{g\in\G}(\langle A^zg,f\rangle)^2,$$ is an analytic function of $z$ in $\Omega_L$.
\end{lemma}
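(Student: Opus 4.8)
The plan is to show that $F(z) = \sum_{g\in\G}(\langle A^zg,f\rangle)^2$ is a locally uniform limit of analytic functions, and then invoke the standard theorem that such a limit is itself analytic. First I would fix an arbitrary compact set $K \subseteq \Omega_L$ and observe that each partial sum $F_n(z) = \sum_{i=1}^n (\langle A^zg_i,f\rangle)^2$ is analytic on $\Omega_L$: indeed, for each fixed $g$, the map $z \mapsto \langle A^zg,f\rangle = \int_\CC z^t\langle \wi g(z'),\wi f(z')\rangle\, d\mu(z')$ (with the inner integration variable renamed to avoid clashing with $z$) is analytic on $\Omega_L$ by exactly the argument of Lemma \ref{lemma 3.6} — dominated convergence gives continuity and Fubini plus Morera's theorem give analyticity — and squaring preserves analyticity, as does a finite sum.

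Next I would establish uniform convergence of $F_n$ to $F$ on $K$. Since $\G$ is Bessel with constant $C_\G$, for $z = x+iy$ with $x > L$ one has, writing $(A^z)^* = (A^*)^{\bar z}$,
\[
\sum_{i=1}^\infty |\langle A^zg_i,f\rangle|^2 = \sum_{i=1}^\infty |\langle (A^*)^{\bar z}f, g_i\rangle|^2 \le C_\G\, \|(A^*)^{\bar z}f\|^2 \le C_\G\, e^{2\pi|\Im z|}\,\|A\|^{2\Re z}\,\|f\|^2,
\]
using that $\|(A^*)^{\bar z}h\|^2 = \int_\CC |z|^{2\Re z}e^{-2\Im z\,\arg(z')}\,\|\wi h(z')\|^2\,d\mu(z') \le e^{2\pi|\Im z|}\|A\|^{2\Re z}\|h\|^2$ on $\sigma(A)$. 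This is precisely the content of Lemma \ref{SeriesConv}: the tail sums of $\sum_i |\langle A^zg_i,f\rangle|^2$ are uniformly small on $K$ (being a convergent series of nonnegative terms dominated by a quantity bounded on $K$), and since $|(\langle A^zg_i,f\rangle)^2| = |\langle A^zg_i,f\rangle|^2$, the tail sums $\sum_{i>n}(\langle A^zg_i,f\rangle)^2$ are bounded in absolute value by those same tails, hence converge to $0$ uniformly on $K$. Therefore $F_n \to F$ uniformly on $K$.

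Finally, since $K \subseteq \Omega_L$ was an arbitrary compact set, $F$ is the locally uniform limit on $\Omega_L$ of the analytic functions $F_n$, so by the Weierstrass convergence theorem (equivalently, apply Morera's theorem to $F$ directly using uniform convergence to pass the limit through a contour integral) $F$ is analytic on $\Omega_L$. The only mildly delicate point is keeping track of the two distinct roles of $z$ — the variable of $F$ versus the integration variable in $\sigma(A)$ — and confirming that the squaring step does not interfere with either the analyticity of the summands (it does not, being composition with an entire function) or the uniform estimate (it does not, since squaring the modulus is exactly the Bessel-type quantity already controlled). I do not expect a substantive obstacle here; the lemma is essentially a packaging of Lemmas \ref{lemma 3.6} and \ref{SeriesConv} together with the elementary fact that a locally uniform limit of holomorphic functions is holomorphic.
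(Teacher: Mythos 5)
Your proposal is correct and follows essentially the same route as the paper: analyticity of each summand via Lemma \ref{lemma 3.6}, uniform convergence on compact subsets of $\Omega_L$ via the Bessel bound of Lemma \ref{SeriesConv} together with $|(\langle A^zg,f\rangle)^2|=|\langle A^zg,f\rangle|^2$, and then the Weierstrass/Morera convergence theorem (the paper cites \cite[Theorem 10.28]{Rudinrc}). No substantive difference.
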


\begin{proof}[\textbf{Proof of Lemma \ref{AnalPosA}}]
	Since $A$ is a  normal  operator on $\HH$, by Lemma \ref{lemma 3.6}, $\left(\langle A^zg,f\rangle\right)^2$ is  analytic in $\Omega_L$.
	Since $\left|\sum_{g\in\G}(\langle A^zg,f\rangle)^2\right|\leq\sum_{g\in\G}|\langle A^zg,f\rangle|^2$, by Lemma \ref{SeriesConv}, the series $\sum_{g\in\G}(\langle A^zg,f\rangle)^2$ converges absolutely and uniformly on any compact subset of $\Omega_{L}$, and the partial sums of $\sum_{g\in\G}(\langle A^zg,f\rangle)^2$ are analytic in $\Omega_{L}$ and converge uniformly on any compact subset of $\Omega_{L}$.  It follows that the series $\sum_{g\in\G}(\langle A^zg,f\rangle)^2$   is an analytic function of $z$ in $\Omega_L$ \cite[Theorem 10.28]{Rudinrc}.
\end{proof}
Let $A\in\mathcal{B}(\HH)$ be a normal operator, by the spectral theorem, there exists a unitary operator $U$ such that
$$UAU^{-1}=N_{\mu_{\infty}}^{(\infty)}\oplus N_{\mu_1}^{(1)}\oplus N_{\mu_2}^{(2)}\oplus\ldots.$$

For every $f\in\HH$, we define $\tilde{f}=Uf\in U\HH$. Note that $\tilde{f}:\sigma(A)\rightarrow \ell^2(\Omega_\infty)\oplus\ell^2(\Omega_1)\oplus\ell^2(\Omega_2)\oplus\ldots$ is a function and hence it makes sense to talk about its real and imaginary parts. Set $f^{\Re}=U^{-1}\Re(\tilde{f})$ and $f^{\Im}=U^{-1}\Im(\tilde{f})$.

\begin{lemma}\label{ReImBessel}
	If $\G$ is a Bessel sequence in $\HH$, then,  $\{g^{\Re}\}_{g\in\G}$ and $\{g^{\Im}\}_{g\in\G}$ are also Bessel sequences in $\HH$ for any given normal operator $A\in\mathcal{H}$. 
\end{lemma}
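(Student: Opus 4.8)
The plan is to realize, for each $g\in\G$, the vectors $g^{\Re}$ and $g^{\Im}$ as fixed linear combinations of $g$ and $Kg$, where $K$ is a conjugation on $\HH$ under which the Bessel property is invariant, and then to conclude via the parallelogram identity. First I would introduce the pointwise complex conjugation $J\colon\mathcal{W}\to\mathcal{W}$ defined by $(J\phi)_j(z)=\overline{\phi_j(z)}$ (complex conjugating each coordinate of the vector $\phi_j(z)\in\ell^2(\Omega_j)$). Since $\|\overline{\phi_j(z)}\|_{\ell^2(\Omega_j)}=\|\phi_j(z)\|_{\ell^2(\Omega_j)}$ pointwise, $J$ is a well-defined map of $\mathcal{W}$ onto itself with $J^2=\mathrm{Id}$, $\|J\phi\|_{\mathcal{W}}=\|\phi\|_{\mathcal{W}}$, and $\la J\phi,J\psi\ra_{\mathcal{W}}=\overline{\la\phi,\psi\ra_{\mathcal{W}}}$ for all $\phi,\psi\in\mathcal{W}$ (each of these is checked coordinatewise in $\ell^2(\Omega_j)$ and then integrated against $\mu_j$). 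Setting $K:=U^{-1}JU$ therefore produces an antilinear, norm-preserving involution of $\HH$, and by the definitions $f^{\Re}=U^{-1}\Re(\tilde f)=\tfrac12(f+Kf)$ and $f^{\Im}=U^{-1}\Im(\tilde f)=\tfrac1{2i}(f-Kf)$ for every $f\in\HH$.

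The next step is to transport the Bessel bound of $\G$ through $K$, that is, to show that $\{Kg\}_{g\in\G}$ is Bessel with the same constant $C_{\G}$. From $\la Ka,Kb\ra=\overline{\la a,b\ra}$ and $K^2=\mathrm{Id}$ one obtains $\la f,Kg\ra=\overline{\la Kf,g\ra}$, hence $|\la f,Kg\ra|=|\la Kf,g\ra|$ for all $f\in\HH$ and $g\in\G$; summing over $\G$, using that $\G$ is Bessel, and using $\|Kf\|=\|f\|$ gives
\[
\sum_{g\in\G}|\la f,Kg\ra|^2=\sum_{g\in\G}|\la Kf,g\ra|^2\le C_{\G}\|Kf\|^2=C_{\G}\|f\|^2 .
\]
Finally, since $|\la f,g^{\Re}\ra|=\tfrac12\,|\la f,g\ra+\la f,Kg\ra|$ and $|\la f,g^{\Im}\ra|=\tfrac12\,|\la f,g\ra-\la f,Kg\ra|$, the parallelogram identity applied to the scalars $\la f,g\ra$ and $\la f,Kg\ra$ yields
\[
|\la f,g^{\Re}\ra|^2+|\la f,g^{\Im}\ra|^2=\tfrac12\bigl(|\la f,g\ra|^2+|\la f,Kg\ra|^2\bigr),
\]
so summing over $g\in\G$ and invoking both Bessel bounds gives $\sum_{g\in\G}|\la f,g^{\Re}\ra|^2\le C_{\G}\|f\|^2$ and $\sum_{g\in\G}|\la f,g^{\Im}\ra|^2\le C_{\G}\|f\|^2$, which is exactly the claim.

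I do not anticipate a genuine obstacle in this argument; its only delicate points are organizational. One is the antilinear bookkeeping — one must verify carefully that $J$ is an honest conjugation, so that $\la J\phi,J\psi\ra=\overline{\la\phi,\psi\ra}$ and hence $\la f,Kg\ra=\overline{\la Kf,g\ra}$. The other is the recognition that the proof must go through the Bessel \emph{inequality} for $\{Kg\}_{g\in\G}$ and not through norm estimates: although $\|g^{\Re}\|\le\|g\|$ and $\|g^{\Im}\|\le\|g\|$ (indeed $\|g^{\Re}\|^2+\|g^{\Im}\|^2=\|g\|^2$), control of the norms of the analysis vectors does not by itself imply the Bessel property in an infinite-dimensional space, so the inequality itself must be transferred along the isometry $K$.
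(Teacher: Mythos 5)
Your proof is correct. It takes a somewhat different route from the paper's, though both ultimately rest on the same fact: coordinatewise complex conjugation in the spectral representation is an isometry. The paper works on the test-vector side. It introduces the real subspace $S=\{f\in\HH:\ Uf\ \text{real valued}\}$, observes that for $f\in S$ the quantities $\la \tilde f,\Re(\tilde g)\ra$ and $\la \tilde f,\Im(\tilde g)\ra$ are real, so that $|\la f,g\ra|^2=|\la f,g^{\Re}\ra|^2+|\la f,g^{\Im}\ra|^2$, which gives the Bessel bound for test vectors in $S$; it then handles a general $f$ by writing $f=f^{\Re}+if^{\Im}$ with both parts in $S$ and applying the same real/imaginary orthogonality once more to get $|\la f,g^{\Re}\ra|^2=|\la f^{\Re},g^{\Re}\ra|^2+|\la f^{\Im},g^{\Re}\ra|^2$, and likewise for $g^{\Im}$. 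You instead work on the generator side: you package the conjugation as an antilinear isometric involution $K=U^{-1}JU$, write $g^{\Re}=\tfrac12(g+Kg)$ and $g^{\Im}=\tfrac1{2i}(g-Kg)$, transfer the Bessel inequality from $\G$ to $K\G$ via $|\la f,Kg\ra|=|\la Kf,g\ra|$ and $\|Kf\|=\|f\|$, and finish with the parallelogram identity on the scalars $\la f,g\ra$ and $\la f,Kg\ra$. Your version avoids the two-stage reduction (first $f\in S$, then general $f$) and handles both families at once with the same constant $C_{\G}$; the paper's version avoids naming the antilinear operator explicitly but pays for it with the case split. Your closing remark --- that one must transfer the Bessel \emph{inequality} itself rather than merely bound $\|g^{\Re}\|$ and $\|g^{\Im}\|$ --- is exactly the right caution in infinite dimensions.
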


\begin{proof}[\textbf{Proof of Lemma \ref{ReImBessel}}]
	Consider the subspace $S\subseteq \HH$ defined by  $S=\{f\in\HH:Uf \text{ is real valued}\}$.
	Then,  for  $f\in S$, using the following identity  
	
	$$\sum_{g\in\G}|\langle f,g\rangle|^2=\sum_{g\in\G}|\langle \tilde{f},\tilde{g}\rangle|=\sum_{g\in\G}|\langle \tilde{f},\Re(\tilde{g})\rangle|^2+|\langle \tilde{f},\Im (\tilde{g})\rangle|^2=\sum_{g\in\G}|\langle f,g^{\Re}\rangle|^2+|\langle f, g^{\Im}\rangle|^2,  $$ it follows that $\{ g^{\Re}\}_{g\in\G}$ and $\{g^{\Im}\}_{g\in\G}$ are Bessel sequences in $S$. 	 
	For general $f\in\HH$, we have $f^\Re\in S$, $f^{\Im}\in S$, and
	$$ \sum_{g\in\G}|\langle f, g^\Re\rangle|^2= \sum_{g\in\G}|\langle f^\Re, g^\Re\rangle|^2+ \sum_{g\in\G}|\langle f^\Im, g^\Re\rangle|^2,$$ 
	$$ \sum_{g\in\G}|\langle f,g^\Im \rangle|^2= \sum_{g\in\G}|\langle f^\Re, g^\Im\rangle|^2+ \sum_{g\in\G}|\langle f^\Im, g^\Im\rangle|^2.$$ 
	It follows that	 $\{ g^\Re\}_{g \in\G}$ and $\{g^\Im\}_{g\in\G}$ are Bessel sequences for $\HH$.
	
\end{proof}

\begin{proof}[\textbf{Proof of Theorem \ref{SCFrSA}}]
	Assume that $\{A^tg\}_{g\in\G,t\in[0,1]}$ is a semi-continuous frame in $\HH$ with frame bounds $c$, $C$.
	By Theorem \ref{ScToDscr}, there exists a finite set $T$ such that $\{A^{t}g\}_{g\in\G,t\in T}$ is a frame for $\HH$. Therefore, for $L\geq 1$, $\{A^tg\}_{g\in\G, t\in[0,L]}$ is also a semi-continuous frame.
	
	To prove that $\{A^tg\}_{g\in\G, t\in[0,L]}$ is a semi-continuous frame for $L<1$, we note that the inequality
	$$\sum_{g\in\G}\int_{0}^{L}|\langle f,A^tg\rangle|^2dt\leq\sum_{g\in\G}\int_{0}^{1}|\langle f,A^tg\rangle|^2dt\leq C\|f\|_2^2$$ implies that 
	$\{A^tg\}_{g\in\G,t\in[0,L]}$ is a  Bessel system in $\HH$. Moreover, $A$ is an invertible bounded self-adjoint operator. Therefore, by Theorem \ref{MainBessel ID}, $\G$ is Bessel in $\HH$ with Bessel constant $C_{\G}.$
	
	Suppose that $\{A^tg\}_{g\in\G, t\in[0,L]}$ is not a frame. Then, there exists a sequence $\{f_n\}$ with $\|f_n\|=1$ such that $\sum_{g\in\G}\int_{0}^{L}|\langle f_n, A^tg\rangle|^2dt\rightarrow 0$. It follows that $\sum_{g\in\G}|\langle f_n,A^tg\rangle|^2\rightarrow 0$ in measure. Thus, there exists a subsequence $\{f_{n_k}\}$ of $\{f_n\}$ such that $\sum_{g\in\G}|\langle f_{n_k},A^tg\rangle|^2\rightarrow 0,$ for a.e. $t\in[0,L]$. 	By passing to a subsequence,  assume that $\sum_{g\in\G}|\langle f_n,A^tg\rangle|^2\rightarrow 0,$ for a.e. $t\in[0,L]$.
	
	To finish the proof, we next  prove  that there exists a subsequence $\{f_{n_k}\}$ of $\{f_{n}\}$ such that $$\sum_{g\in\G}\int_{0}^{1}|\langle f_{n_k},A^tg\rangle|^2dt\rightarrow 0.$$

	Since $A$ is a self-adjoint operator, by the spectral theorem, there exists a unitary operator $U$ such that $A$ can be represented as \eqref{representation of normal} and  $\sigma(A)\subseteq\mathbb{R}$. In addition, $A$ is  invertible.  %\textcolor{red}{By Lemma \ref{SpecInvNorm}, 
	Then there exist $m,M>0$ such that $m\leq|z|\leq M$ for all $z\in\sigma(A)$. Set $\tilde{f}=Uf$ and $\tilde{g}=Ug$. 	
	
	{\bf Case 1.} {\em  $A$ is a positive self-adjoint operator, and $\{Ug\}_{g\in\G}$ and $\{Uf_{n}\}$  are real-valued, i.e., $Ug=\Re(\tilde{g}) \text{ for all }g\in\G$ and $ Uf_n=\Re(\tilde f_n)$}: In this case, one has   $ |\langle f_{n},A^tg\rangle|^2=(\langle A^tg,f_n\rangle)^2, \text{ for all } t\in\mathbb{R}^+.$ Therefore 
	$$\sum_{g\in\G}|\langle f_{n},A^tg\rangle|^2=\sum_{g\in\G}(\langle A^tg,f_n\rangle)^2, \text{ for all } t\in\mathbb{R}^+.$$
	Moreover, since $\G$ is Bessel, by Lemma \ref{AnalPosA}, the functions $F_n(t)=\sum_{g\in\G}(\langle A^tg,f_n\rangle)^2$ are analytic for $ t \in \Omega_{L/4}\cap D_r\subseteq \CC$ and  satisfy 
	\begin{eqnarray*}
		|F_n(t)|=\left|\sum_{g\in\G}(\langle A^tg,f_n\rangle)^2\right|&\leq& \sum_{g\in\G}\left|\langle g,(A^{t})^*f_n\rangle\right|^2\leq C_{\G}\|A\|^{2r},  \text{ for } t \in \Omega_{L/4}\cap D_r.
	\end{eqnarray*}
	Thus, by Montel's theorem, there exists a subsequence $\{F_{n_k}\}$ of $\{F_n\}$ such that $\{F_{n_k}\}$  converge to an analytic function $F$ on $ \Omega_{L/4}\cap D_r$. Let $D_r \subset \CC$ be a disk of radius $r$  containing $[L/2,1]$.  Since $F_n$ are analytic and $F_n(t)\rightarrow 0,\text{ for all } t\in[L/2,L]$, it follows that $F(t)=0$, for all $t\in[L/2,L]$. Moreover, since  $F$ is analytic, we conclude that $F(t)=0$ for all $ t \in \Omega_{L/4}\cap D_r$, and hence also on $[L/2,1]$, i.e., $\lim_{n_k\rightarrow\infty}F_{n_k}(t)=0$ for all $t\in[L/2,1]$. Thus, 
	\begin{eqnarray*}
		&&\sum_{g\in\G}\int_{0}^{1}|\langle f_{n_k},A^tg\rangle|^2dt\\
		&=&\sum_{g\in\G}\int_{0}^{L/2}|\langle f_{n_k},A^tg\rangle|^2dt+\sum_{g\in\G}\int_{L/2}^{1}|\langle f_{n_k},A^tg\rangle|^2dt.
	\end{eqnarray*} Taking limits as $n_k$ tends to infinity, one sees that $\lim\limits_{n_k\rightarrow\infty}\sum_{g\in\G}\int_{0}^{1}|\langle f_{n_k},A^tg\rangle|^2dt=0$. This contradicts the assumption that $\{A^tg\}_{g\in\G,t\in[0,1]}$ is a semi-continuous frame. Therefore, $\{A^tg\}_{g\in\G, t\in[0,L]}$ is a semi-continuous frame.\\
	
	{\bf Case 2.} {\em The general case}:

	Let $\tilde {f}_n=\Re(\tilde{f}_{n}) +i\Im(\tilde{f}_{n}) $ and $\tilde {g}=\Re(\tilde{g}) +i\Im(\tilde{g}) $. Define $f_n^\Re=U^{-1}\Re(\tilde{f}_{n})$,  $f_n^\Im=U^{-1}\Im(\tilde{f}_{n})$, $g^\Re=U^{-1}\Re(\tilde{g})$, and $g^\Im=U^{-1}\Im(\tilde{g})$. Define $A_+^t$ and $A_{-}^t$ as 
	\begin {align*}
	\langle A_+^tg,f\rangle&=\int_{z\in\sigma(A),z>0}z^t\langle \tilde{g},\tilde{f}\rangle d\mu(z),\\ 
	\langle A_{-}^tg,f\rangle&=\int_{z\in\sigma(A),z<0}(-z)^t\langle \tilde{g},\tilde{f}\rangle d\mu(z).
\end{align*}
Then $A_{-}$ and $A_+$ are positive operators, and $\langle A^tg,f\rangle= \langle A_{+}^tg,f\rangle+e^{i\pi t}\langle A_{-}^tg,f\rangle$. 

For $t\in\mathbb{R}^+$, one has
\begin{equation}
\sum_{g\in\G}|\langle f_n,A^tg\rangle|^2=F_n(t)+G_n(t),
\end{equation}
where 
\begin{eqnarray*}
	F_n(t)&=&\sum_{g\in\G}(\langle A_{+}^tg^\Re,f_n^\Re\rangle+\langle A_{+}^tg^\Im,f_n^\Im\rangle+\cos(\pi t)\cdot(\langle A_{-}^tg^\Re,f_n^\Re\rangle+\langle A_{-}^tg^\Im,f_n^\Im\rangle)+\\
	&&\sin(\pi t)\cdot
	(\langle A_{-}^tg^\Re,f_n^\Im\rangle-\langle A_{-}^tg^\Im,f_n^\Re\rangle))^2,
\end{eqnarray*}
and
\begin{eqnarray*}
	G_n(t)&=&\sum_{g\in\G}(\langle A_{+}^tg^\Im,f_n^\Re\rangle-\langle A_{+}^tg^\Re,f_n^\Im\rangle+\sin(\pi t)\cdot(\langle A_{-}^tg^\Re,f_n^\Re\rangle+\langle A_{-}^tg^\Im,f_n^\Im\rangle)+\\
	&&\cos(\pi t)\cdot(\langle A_{-}^tg^\Im,f_n^\Re\rangle-\langle A_{-}^tg^\Re,f_n^\Im\rangle))^2.
\end{eqnarray*}
Note that for  $t \in \Omega_{L/4}\cap D_r$, by Lemma \ref{ReImBessel},  one has
\begin{eqnarray*}
	|F_n(t)|&\leq&6\cdot\left(\sum_{g\in\G}|	\langle f_n^\Re,A_{+}^tg^\Re\rangle|^2+|\langle f_n^\Im,A_{+}^tg^\Im\rangle|^2+\frac{3+e^{2\pi r}}{4}\cdot(|\langle f_n^\Re,A_{-}^tg^\Re\rangle|^2+\right.\\
	&&\left.|\langle f_n^\Im,A_{-}^tg^\Im\rangle|^2)+\frac{3+e^{2\pi r}}{4}\cdot(|\langle f_n^\Re,A_{-}^tg^\Im\rangle|^2+|\langle f_n^\Im,A_{-}^tg^\Re\rangle|^2)\right)\\
	&\leq&6\cdot\left(C_\G\|A\|^{2r}+\frac{3+e^{2\pi r}}{4}\cdot C_\G\|A\|^{2r}+\frac{3+e^{2\pi r}}{4}\cdot C_{\G}\|A\|^{2r}\right)\\
	&=&(15+3e^{2\pi r})\cdot C_\G\cdot\|A\|^{2r},
\end{eqnarray*} 
and
\begin{eqnarray*}
	|G_n(t)|&\leq& (15+3e^{2\pi r})\cdot C_\G\cdot\|A\|^{2r}.
\end{eqnarray*}

Thus, (using a similar proof as in Lemma \ref{AnalPosA}) $F_n$ and $G_n$ are uniformly bounded  analytic functions in $\Omega_{L/4}\cap D_r$.

As in Case 1, one can find two subsequences  $\{F_{n_k}\}$ and $\{G_{n_k}\}$ converging to analytic functions $F$ and $G$, respectively. Moreover, since $G_n(t)\leq \sum_{g\in\G}|\langle f_n,A^tg\rangle|^2$, and  $F_n(t)\leq \sum_{g\in\G}|\langle f_n,A^tg\rangle|^2$ for all $ t\in \R^+$ , and  $\lim_{n\rightarrow\infty}\sum_{g\in\G}|\langle f_n,A^tg\rangle|^2=0, a.e.~ t\in[0,L],$ one can proceed as in the proof of Case 1 and get the contradiction that
$$\lim_{n_{k_j}\rightarrow\infty}\sum_{g\in\G}\int_{0}^{1}|\langle f_{n_{k_j}},A^tg\rangle|^2=0.$$
Thus,  $\{A^tg\}_{g\in\G,t\in[0,L]}$ is a semi-continuous frame for $\HH$.
\end{proof}

\section{Acknowledgements}
The authors would like to thank Mr. Bin Sun for stimulating discussions, and Dr. Keaton Hamm and  Dr. Tao Wang for helping us to improve the manuscript. We also would like to thank Dr. Rudy Rodsphon for helpful suggestions.

\bibliographystyle{plain}
\bibliography{Akram_refs}

\end{document}